\newcommand{\blind}{1}
\newtheorem{theorem}{Theorem}
\newtheorem{lemma}[theorem]{Lemma}
\newtheorem{remark}[theorem]{Remark}%
\newtheorem{corollary}[theorem]{Corollary}
\newtheorem{assumption}[theorem]{Assumption}
\def\g{\gamma}
\newcommand{\eps}{\varepsilon}
\newcommand{\RR}{\mathbb{R}}
\newcommand{\NN}{\mathbb{N}}
\newcommand{\HH}{\mathbb{H}}
\renewcommand{\b}{\beta}
\newcommand{\cov}{\text{cov}}
\renewcommand{\epsilon}{\varepsilon}
\newcommand{\bs}{\boldsymbol}
\DeclareMathOperator{\E}{\mathbb{E}}
\newcommand{\e}{\varepsilon}
\renewcommand{\g}{\gamma}
\begin{document}

\def\spacingset#1{\renewcommand{\baselinestretch}%
{#1}\small\normalsize} \spacingset{1}

%%%%%%%%%%%%%%%%%%%%%%%%%%%%%%%%%%%%%%%%%%%%%%%%%%%%%%%%%%%%%%%%%%%%%%%%%%%%%%

\if1\blind
{
  \title{\bf Adaptation using spatially distributed Gaussian Processes}
  \author{Botond Szabo \thanks{Co-funded by the European Union (ERC, BigBayesUQ, project number: 101041064).
Views and opinions expressed are however those of the author(s) only and do not necessarily reflect
those of the European Union or the European Research Council. Neither the European Union nor the
granting authority can be held responsible for them. }\hspace{.2cm}\\
    Department of Decision Sciences and BIDSA, Bocconi University, Italy\\
    and \\
    Amine Hadji \\
    Mathematical Institute, Leiden University, The Netherlands\\
and\\
Aad van der Vaart\thanks{This research was partially funded by a Spinoza grant of the Dutch Research Council (NWO).}\\
Delft Institute of Applied Mathematics, DIAM,\\ Delft University of Technology, The Netherlands}
  \maketitle
} \fi

\if0\blind
{
  \bigskip
  \bigskip
  \bigskip
  \begin{center}
    {\LARGE\bf Adaptation using spatially distributed Gaussian Processes}
\end{center}
  \medskip
} \fi

\bigskip
\begin{abstract}
We consider the accuracy of an approximate posterior distribution in nonparametric regression problems by combining posterior distributions
computed on subsets of the data defined by the locations of the independent variables. We show that this approximate posterior retains the rate of recovery of the full data posterior distribution, where the rate of recovery adapts to the smoothness of the true regression function. As particular examples we consider Gaussian process priors based on integrated Brownian motion and the Mat\'ern kernel augmented with a prior on the length scale. Besides theoretical guarantees we present a numerical study of the methods  both on synthetic and real world data. We also propose a new aggregation technique, which numerically outperforms previous approaches. Finally, we demonstrate empirically that spatially distributed methods can adapt to local regularities, potentially outperforming the original Gaussian process.
\end{abstract}

\noindent%
{\it Keywords:} Gaussian Processes, Bayesian Asymptotics, Nonparametric Bayes, Distributed computation, Adaptation
\vfill

\newpage
\spacingset{1.9}
\section{Introduction}
Gaussian processes (GPs) are standard tools in statistical and machine learning. They provide a particularly effective prior distribution over the space of functions and are routinely used  in regression and classification tasks, amongst others. The monograph  \cite{rasmussen:williams:2006} gives an in-depth overview of the foundations and practical applications of this approach. However, GPs scale poorly with the sample size $n$. For instance, in regression the computational complexity and memory requirements are of the orders $O(n^3)$ and $O(n^2)$, respectively. This limitation has triggered the development of various approximation methods, including sparse approximations of the empirical covariance matrices \cite{gibbs:poole:stockmeyer:1976,saad:1990,quinonero:rasmussen:2005}, variational Bayes approximations \cite{titsias:2009, burt2019, nieman2022} or distributed methods. We focus on the latter method in this paper.

In distributed (or divide-and-conquer) methods, the compu\-tational burden is reduced by splitting the data over ``local'' machines (or servers, experts  or cores). Next the computations are carried out locally, in parallel to each other, before transmitting the outcomes to a ``central'' server or core, where the partial, local results are combined, forming the final outcome of the procedure. This distributed architecture occurs naturally when data is collected and processed locally and only a summary statistic is transmitted to a central server. Besides speeding up the computations and reducing the memory requirements, distributed methods can also help in protecting privacy, as the data do not have to be stored in a central data base, but are processed locally.

In the literature various distributed methods were proposed to speed up Bayesian computation, in particular in the context of Gaussian Processes. One can distinguish two main strategies depending on the data-splitting technique. The first approach is to partition the data randomly over the servers, computing a posterior distribution on each server and finally aggregating these local distributions by some type of averaging. Examples include Consensus Monte Carlo \cite{scott:blocker:et.al:16}, WASP \cite{srivastava:2015a}, Generalized Product of Experts \cite{cao:fleet:14}, Bayesian Committee Machine \cite{tresp:2000,deisenroth2015}, Distributed Bayesian Varying Coefficients \cite{guhaniyogi2022distributed} and Distributed Kriging \cite{guhaniyogi2017divide}. The second approach takes advantage of the spatial structure of the data and splits the observations based on a partition of the design space. Each machine is assigned a specific region of the space, a local posterior distribution is computed using the data in this region, and these are glued together to form the final answer.  This approach is referred to as  the Naive-Local-Experts model, see \cite{kim:mallick:holmes,vasudevan2009gaussian}. We discuss various methods of combining the local outputs in Section~\ref{sec:synthetic}, including a new proposal, which outperforms its competitors in the case that the length scale of the priors is determined from the data.

A number of papers in the literature studied the randomly split data approach, deriving theoretical guarantees, but also limitations,  for a range of methods and models. Under the assumption that the regularity of the underlying functional parameter is known, minimax rate-optimal contraction rates and frequentist coverage guarantees for Bayesian credible sets were derived in the context of the Gaussian white noise \cite{szabo:vzanten:19} and nonparametric regression models \cite{guhaniyogi2017divide, shang:cheng:2015, hadji2022optimal}. However, in practice  the latter regularity is typically not known, but inferred from the data in some way. In the mentioned references it is shown that with randomly split data, standard adaptation techniques necessarily lead to highly sub-optimal inference, see \cite{szabo:vzanten:19}. In the numerical analysis in the present paper we observe this on both synthetic and real world data sets.

In contrast, spatially partitioned distributed approaches have received little theoretical attention, despite their popularity in applications. In this paper we aim to fill this gap in the literature. We derive general contraction rate theorems under mild assumptions and apply them in the context of the nonparametric regression and classification models. We also consider two specific GP priors: the rescaled integrated Brownian motion and the Mat\'ern process and show that both priors (augmented with an additional layer of prior on the scale parameter) lead to rate-adaptive posterior contraction rates. This is in sharp contrast to the randomly split data framework, which necessarily results in sub-optimal estimation. Thus we provide the first adaptive distributed Bayesian method with theoretical guarantees. We also demonstrate the superior performance of spatially distributed methods on synthetic and real world data sets. Furthermore, we propose a novel aggregation technique, which  numerically outperforms its close competitors, especially in the realistic situation that the length scales of the local posteriors are adapted to the data. Finally, we also demonstrate numerically that the spatially distributed GP methods can adapt to different local regularities in contrast to the original GP. Therefore, spatially distributing the data can not only speed up the computations, but can potentially improve the accuracy of the GP method.

The paper is organised as follows. In Section~\ref{sec: main} we introduce the spatially distributed general framework with GP priors and recall the regression and classification models, considered as examples in our paper. Then in Sections~\ref{subsec: nonadapt} and~\ref{sec:adapt} we derive general contraction rate results under mild conditions in the non-adaptive and adaptive frameworks, respectively, using the hierarchical Bayesian method in the latter one. As specific examples we consider the rescaled integrated Brownian motion and the Mat\'ern process in Sections~\ref{sec:IBM} and~\ref{sec:Matern}, respectively. For both priors we derive rate-adaptive contraction rates in the regression and classification models using the fully Bayesian approach. The theoretical guarantees are complemented with a numerical analysis. In Section~\ref{sec:aggregation} we discuss various aggregation techniques. We investigate their numerical properties compared to benchmark distributed and non-distributed methods on synthetic and real world data sets in Sections~\ref{sec:synthetic} and~\ref{sec:realworld}, respectively. We discuss our results and future directions in Section~\ref{sec:discussion}. The proofs for the general theorems are given in Section~\ref{sec:proof:general} and for the specific examples in Section~\ref{sec:proof:examples}. A collection of auxiliary lemmas is presented in Section~\ref{sec:proof:lemmas}. Finally, additional numerical analysis on real and simulated data sets are provided in Sections \ref{sec: realworld:extra} and \ref{sec: simulations:extra}, respectively. We highlight that local adaptation of the process is investigated in Section \ref{sec:spatial} of the supplement.

%We write $C^\beta([a,b])$ and $H^\beta([a,b])$ for the H\"older and Sobolev spaces of orders $\beta$, respectively. These are the spaces of functions $f: [a,b]\to\RR$ that are $b$ times differentiable, for $b$ the largest integer strictly smaller than $\beta$, with $f^{(b)}$ Lipschitz ($\|f^{(b)}(x)-f^{(b)}(y)|\lesssim |x-y|^{\beta-b}$, for every $x,y\in [a,b]$), or in case $\beta=b+1\in\NN$, absolutely continuous with square-integrable derivative ($\int_a^b f^{(b+1)}(s)^2\,ds<\infty$).

We write $C^\beta([a,b])$ for the H\"older space of order $\beta>0$: the space of functions $f: [a,b]\to\RR$ that are $b$ times differentiable, for $b$ the largest integer strictly smaller than $\beta$, with  highest order derivative $f^{(b)}$ satisfying $|f^{(b)}(x)-f^{(b)}(y)|\lesssim |x-y|^{\beta-b}$, for every $x,y\in [a,b]$. We also write $H^\beta([a,b])$ for the Sobolev space of order $\beta$.

\section{Spatially distributed Bayesian inference with GP priors}\label{sec: main}
We consider general nonparametric regression models. The observations are independent pairs $(x_1, Y_1),\ldots, (x_n,Y_n)$, where the covariates $x_i$ are considered to be fixed and the corresponding dependent variables $Y_i$ random. We state our abstract theorems in this general setting, but next focus on two commonly used models: nonparametric regression with Gaussian errors and logistic regression.

In the standard nonparametric regression model the observed data $\textbf{Y}=(Y_1,Y_2,\ldots,Y_n)\in\mathbb{R}^n$  satisfy the relation
\begin{align}
Y_i= f_0(x_i)+Z_i,\qquad Z_i\stackrel{iid}{\sim}\mathcal{N}(0,\sigma^2),  \qquad i=1,\ldots,n.\label{def:regression}
\end{align}
The goal is to estimate the unknown regression function $f_0$, which is assumed to be smooth, but not to take a known parametric form.
In the logistic regression model the observed data $\textbf{Y}=(Y_1,Y_2,\ldots,Y_n)\in\{0,1\}^n$ are binary with likelihood function
\begin{align}
\Pr(Y_i=1|x_i)= \psi\big(f_0(x_i)\big),  \qquad i=1,\ldots,n,\label{def:log:regression}
\end{align}
where $\psi(x)=1/(1+e^{-x})$ denotes the logistic function. Additional examples include Poisson regression, binomial regression, etc.

We consider the distributed version of these models. We assume that the data is spatially distributed over $m$ machines in the following way. The $k$th machine, for $k\in\{1,\ldots,m\}$, receives the observations $Y_i$ with design points $x_i$ belonging to the $k$th subregion $\mathcal{D}^{(k)}$ of the design space $\mathcal{D}$, i.e.\  $x_i\in \mathcal{D}^{(k)}$. In the examples in Section~\ref{sec:examples} we take the domain of the regression function $f_0$ to be the unit interval $[0,1]$ and split it into equidistant sub-intervals $I^{(k)}= (\frac{k-1}{m}, \frac{k}{m}]$. We use the shorthand notations $\textbf{x}^{(k)}=\{x_i:\, x_i\in \mathcal{D}^{(k)}\}$, and $\textbf{Y}^{(k)}=\{Y_i:\, x_i\in \mathcal{D}^{(k)}\}$. However, our results hold more generally, also in the multivariate setting. For simplicity of notation, we assume that $|\textbf{x}^{(k)}|=n/m$, but in general it is enough that the number of points in each subregion is proportional to $n/m$ (with respect to a universal constant).

We endow the functional parameter $f_0$ in each machine with a Gaussian Process prior $(G_t^{(k)}: t\in \mathcal{D})$, with sample paths supported on the full covariate space, identical in distribution but independent across the machines. Gaussian prior processes often depend on regularity and/or scale hyper-parameters, which can be adjusted to achieve bigger flexibility. Corresponding local posteriors are computed based on the data $\textbf{Y}^{(k)}=\{Y_i:\, x_i\in \mathcal{D}^{(k)}\}$ available at the local machines, independently across the machines. These define stochastic processes (supported on the full covariate space), which we aggregate into a single one by restricting them to the corresponding subregions and pasting them together, i.e.\ a draw $f$ from the ``aggregated posterior'' is defined as
\begin{align}
 f(x)=\sum_{k=1}^m 1_{\mathcal{D}^{(k)}}(x)  f^{(k)}(x),\label{def:aggr:spatial}
\end{align}
where $ f^{(k)}$ is a draw from the $k$th local posterior. Formally, an ``aggregated posterior measure'' is defined  as
\begin{align}
\Pi_{n,m}( B|\textbf{Y})=\prod_{k=1}^m\Pi^{(k)}( B_k|\textbf{Y}^{(k)}),\label{def:aggr:post}
\end{align}
where $B$ is a measurable set of functions,
$\Pi^{(k)}(\cdot| \textbf{Y}^{(k)})$ is the posterior distribution in the $k$th local problem corresponding to the prior $\Pi^{(k)}(\cdot)$, and
$B_k$ is the set of all functions whose restriction to $\mathcal{D}^{(k)}$ agrees with the corresponding restriction of some element of $B$, i.e.
$$B_k=\bigl\{\vartheta\colon [0,1]\to\RR \Big|\,  \exists f\in B\text{ such that }\vartheta(x)= f(x),\, \forall x\in \mathcal{D}^{(k)}\bigr\}.$$

\subsection{Posterior contraction for distributed GP for independent observations}\label{subsec: nonadapt}

We investigate the contraction rate of the aggregate posterior $\Pi_{n,m}(\cdot|\textbf{Y})$ given in \eqref{def:aggr:post}. Our general result is stated in terms of a local version of the concentration function originally introduced in \cite{vaart:zanten:2008} for the non-distributed model. This local concentration function is attached to the restriction of the Gaussian prior process to the subregion $\mathcal{D}^{(k)}$. For $k=1,\ldots, m$, let $\| f\|_{\infty,k}=\sup_{x\in\mathcal{D}^{(k)}}| f(x)|$ denote the $L_{\infty}$-norm restricted to $\mathcal{D}^{(k)}$, and define
\begin{align}
\phi_{ f_0}^{(k)}(\eps)=\inf_{h\in \mathbb{H}^{(k)}:\| f_0-h\|_{\infty,k}\leq\eps}\|h\|^2_{\mathbb{H}^{(k)}}-\log\Pi^{(k)}\left( f:\| f\|_{\infty,k}<\eps\right),\label{def:conc:function}
\end{align}
where   $\|\cdot\|_{\mathbb{H}^{(k)}}$ is the norm corresponding to the Reproducing Kernel Hilbert Space (RKHS) $\mathbb{H}^{(k)}$ of the Gaussian process $(G_t^{(k)}: t\in \mathcal{D}^{(k)})$ and $\Pi^{(k)}$ denotes the law of this process.

We consider contraction rates relative to the semimetric $d_n$, whose square is given by
\begin{align}
d_n(f,g)^2&=\frac{1}{n}\sum_{i=1}^n h_i(f,g)^2,\label{def:Hellinger}\\
\noalign{\noindent with}
 h_i(f,g)^2&=\int \bigl(\sqrt{p_{f,i}}-\sqrt{p_{g,i}}\bigr)^2 d\mu_i,\nonumber
\end{align}
	where  $p_{f,i}$ denotes the density of $Y_i$ given $x_i$ and $f$ with respect to some dominating measure $\mu_i$, for $i=1,\ldots,n$. 
The semimetrics $d_n$ are convenient for general theory, but as we discuss below, our results can be extended to other semimetrics as well, for instance to the empirical $L_2$-distance
$\|f-g\|_n$, for $\|f\|_n^2=n^{-1}\sum_{i=1}^n f^2(x_i)$,  in the case of the nonparametric regression model. 

The following standard and mild assumption relate the supremum norm to the $d_n$ semimetric and Kullback-Leibler divergence and variation.
	
\begin{assumption}\label{ass:metric}
For all bounded functions $f, g$, 
\begin{align*}
\max\big\{  h_i(f,g)^2, K(p_{f,i},p_{g,i}), V(p_{f,i},p_{g,i})\big\}\lesssim \|f-g\|_{\infty,k}^2,
\end{align*}
where $K(p_{f,i},p_{g,i})=\int \log(p_{f,i}/p_{g,i}) p_{f,i}\,d\mu_i$ and $V(p_{f,i},p_{g,i})=\int \log(p_{f,i}/p_{g,i})^2 p_{f,i}\,d\mu_i$.
\end{assumption}

For instance, in the case of the nonparametric regression model, the maximum in the left hand side is bounded above by a multiple of $(f(x_i)-g(x_i))^2\leq \|f-g\|_{\infty,k}^2$, for any $x_i\in \mathcal{D}^{(k)}$; see for example page~214 of \cite{ghoshal:vaart:2007}. The condition also holds for the logistic regression model; see for instance the proof of Lemma~2.8 of \cite{ghosal2017fundamentals}. 

The preceding assumption suffices to express the posterior contraction rate with the help of the local concentration functions. The proof of the following theorem can be found in Section~\ref{sec:proof:nonadapt}.

\begin{theorem}\label{thm: nonadaptive}
Let $ f_0$ be a bounded function and assume that there exists a sequence $\eps_n\rightarrow 0$ with $(n/m^2)\eps_n^2\rightarrow\infty$ such that $\phi_{ f_0}^{(k)}(\eps_n)\leq (n/m)\eps_n^2$, for $k=1,\ldots,m$. Then under Assumption~\ref{ass:metric},
the aggregated posterior given in \eqref{def:aggr:post} contracts around the truth with rate $\eps_n$, i.e.
$$\E_0\Pi_{n,m}\bigl( f:d_n( f, f_0)\geq M_n \eps_n|\textbf{Y}\bigr)\rightarrow 0,$$
for arbitrary $M_n\rightarrow\infty$. In the distributed nonparametric regression model \eqref{def:regression} 
or the classification model \eqref{def:log:regression}, the condition $(n/m^2)\eps_n^2\rightarrow\infty$ may be relaxed to
 $m= o(n\eps_n^2/\log n)$.
\end{theorem}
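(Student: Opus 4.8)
The plan is to exploit the product structure of the aggregated posterior to reduce the claim to $m$ independent single-machine contraction statements, and then to recombine these through a union bound, tracking the failure probabilities carefully enough to recover the stated rate conditions. Write $n_k=n/m$ for the number of design points in $\mathcal{D}^{(k)}$ and introduce the localised semimetric $d_{n,k}(f,g)^2=n_k^{-1}\sum_{i:\,x_i\in\mathcal{D}^{(k)}}h_i(f,g)^2$, so that, because each subregion carries $n/m$ points, $d_n(f,g)^2=m^{-1}\sum_{k=1}^m d_{n,k}(f,g)^2$. Since a draw $f$ from $\Pi_{n,m}(\cdot|\textbf{Y})$ equals $f^{(k)}$ on $\mathcal{D}^{(k)}$ with the $f^{(k)}$ independent across $k$, and since $d_{n,k}(f,f_0)$ depends on $f$ only through its restriction to $\mathcal{D}^{(k)}$, the event $\{d_n(f,f_0)\ge M_n\eps_n\}$ is contained, for every fixed $M$ and all large $n$, in $\bigcup_{k=1}^m\{d_{n,k}(f^{(k)},f_0)>M\eps_n\}$: if all local distances are at most $M\eps_n$ then $d_n(f,f_0)\le M\eps_n<M_n\eps_n$. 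A union bound over the product measure therefore yields
\begin{align*}
\E_0\Pi_{n,m}\bigl(d_n(f,f_0)\ge M_n\eps_n\,\big|\,\textbf{Y}\bigr)\le\sum_{k=1}^m\E_0\Pi^{(k)}\bigl(d_{n,k}(f,f_0)>M\eps_n\,\big|\,\textbf{Y}^{(k)}\bigr).
\end{align*}

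Next I would bound each local term by the standard Gaussian-process contraction machinery of \cite{vaart:zanten:2008,ghosal2017fundamentals} applied on machine $k$ with $n_k$ observations. The hypothesis $\phi_{f_0}^{(k)}(\eps_n)\le n_k\eps_n^2$ together with Assumption~\ref{ass:metric} supplies the two ingredients. First, the small-ball part of the concentration function gives the prior-mass bound $\Pi(\|f-f_0\|_{\infty,k}\le\eps_n)\ge e^{-n_k\eps_n^2}$, which by Assumption~\ref{ass:metric} controls the relevant Kullback--Leibler neighbourhood and hence, via the usual evidence lower bound, shows that the local normalising constant is at least $e^{-cn_k\eps_n^2}$ except on an event of $\PP_0$-probability at most $(n_k\eps_n^2)^{-1}=m/(n\eps_n^2)$. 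Second, the RKHS part of the concentration function produces, through Borell--TIS, sieves $\FF_{n,k}$ with $\Pi(\FF_{n,k}^c)\le e^{-Cn_k\eps_n^2}$ and $\log N(\eps_n,\FF_{n,k},\|\cdot\|_{\infty,k})\lesssim n_k\eps_n^2$. Because $h_i(f,g)^2\lesssim\|f-g\|_{\infty,k}^2$ by Assumption~\ref{ass:metric}, this entropy bound dominates the $d_{n,k}$-entropy, so Hellinger-type testing theory furnishes tests with vanishing level and type-II error at most $e^{-Kn_k\eps_n^2}$. Combining these through Bayes' formula gives $\E_0\Pi^{(k)}(d_{n,k}(f,f_0)>M\eps_n|\textbf{Y}^{(k)})\lesssim m/(n\eps_n^2)+e^{-cn_k\eps_n^2}$ for $M$ large enough.

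Summing the $m$ local bounds yields $\E_0\Pi_{n,m}(d_n(f,f_0)\ge M_n\eps_n|\textbf{Y})\lesssim m^2/(n\eps_n^2)+m\,e^{-c(n/m)\eps_n^2}$. The first term is exactly where the hypothesis $(n/m^2)\eps_n^2\to\infty$ is needed, and under it the second term is negligible; this proves the general statement. For the sharper condition in the regression and classification models I would replace the generic polynomial control of the evidence by a model-specific exponential bound: in the Gaussian regression model the log-likelihood ratio is an explicit affine-quadratic functional of independent Gaussians, and in logistic regression a sum of bounded-increment terms, so in both cases the normalising constant falls below $e^{-cn_k\eps_n^2}$ only on an event of probability $e^{-c'n_k\eps_n^2}$ rather than $m/(n\eps_n^2)$. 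The union bound then gives $\E_0\Pi_{n,m}(\cdots)\lesssim m\,e^{-c(n/m)\eps_n^2}$, which tends to zero precisely when $(n/m)\eps_n^2\gg\log m$, and in particular when $m=o(n\eps_n^2/\log n)$.

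The main obstacle I anticipate is the bookkeeping of the two distinct failure mechanisms under the union bound: the sieve and testing errors are exponentially small and harmless, but in the general case the evidence lower bound contributes only a polynomial factor, and it is the accumulation of this factor across the $m$ machines that forces the $(n/m^2)\eps_n^2\to\infty$ requirement. Obtaining the improved $m=o(n\eps_n^2/\log n)$ threshold hinges entirely on upgrading this evidence estimate to an exponential one, which is model-specific and is the only place where the structure of the regression and classification likelihoods (beyond Assumption~\ref{ass:metric}) is used. A minor additional point to verify is that the boundedness of $f_0$ and the Hellinger-type nature of $d_{n,k}$ (each $h_i^2$ is uniformly bounded) keep the test and sieve constructions uniform across machines, which is what makes the crude union bound, rather than a more delicate argument, sufficient.
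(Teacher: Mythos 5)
Your proposal is correct and follows essentially the same route as the paper: the decomposition $d_n^2=m^{-1}\sum_k (d_n^{(k)})^2$ plus a union bound over the product posterior, the reduction of each local term to the standard concentration-function contraction theorem with $n/m$ observations (the paper cites Theorem~4 of \cite{ghoshal:vaart:2007} and Theorems~8.19--8.20 of \cite{ghosal2017fundamentals} for the machinery you sketch), and the identification of the evidence lower bound as the sole source of the polynomial factor $m/(n\eps_n^2)$, upgraded to an exponential bound in the Gaussian and logistic models (the paper's Lemma~\ref{lem: LB:denom}, via an Orlicz-norm argument) to obtain the relaxed condition $m=o(n\eps_n^2/\log n)$. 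No gaps.
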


{
\begin{remark}
We note that the frequentist contraction rate guarantees for the spatially distributed methods, given in Theorem \ref{thm: nonadaptive} above, hold more generally, beyond Gaussian Process priors. The general results can be stated with the help of the appropriately adapted versions of the prior small ball, remaining mass and entropy conditions to the distributed setting. We provide such a general result in Theorem \ref{thm: adaptation} below, for the adaptive, hierarchical choice of the prior.
\end{remark}}

\subsection{Adaptation}\label{sec:adapt}
It is common practice to tune the prior GP by changing its ``length scale'' and consider the process $t\mapsto G^{\tau}_t:=G_{\tau t}$, for a given parameter $\tau$ instead of the original process. Even though the qualitative smoothness of the sample paths does not change, a dramatic impact on the posterior contraction rate can be observed when $\tau=\tau_n$ tends to infinity or zero with the sample size $n$. A length scale $\tau>1$ entails shrinking a process on a bigger time set to the time set $[0,1]$, whereas $\tau<1$ corresponds to stretching. Intuitively, shrinking makes the sample paths more variable, as the randomness on a bigger time set is packed inside a smaller one, whereas stretching creates a smoother process. We show in our examples that by optimally choosing the scale hyper-parameter, depending on the regularity of the true function $f_0$ and the GP, one can achieve rate-optimal contraction for the aggregated posterior. We also show that this same rate-optimal contraction (up to an arbitrary level set by the user) is achieved in a data-driven way by choosing the length scale from a prior, without knowledge of the regularity of the underlying function $f_0$. Thus we augment the model with another layer of prior, making the scale parameter $\tau$ a random variable,  in a fully Bayesian approach.

Each local problem, for $k=1,\ldots,m$, receives its own scale parameter, independently of the other problems, and a local posterior is formed  using the local data of each problem independently across the local problems. For simplicity we use the same prior for $\tau$ in each local problem. If this is given by a Lebesgue density $g$ and $\Pi^{\tau,(k)}$ is the prior on $f$ with scale $\tau$ used in the $k$th local problem, then the hierarchical prior for $f$ in the $k$th local problem takes the form
\begin{align}
\Pi^{g,(k)}(\cdot)=\int \Pi^{\tau,(k)}(\cdot) g(\tau)\,d\tau.\label{def:local:hier:prior}
\end{align}
After forming a local posterior using this prior and the corresponding local data in each local problem, an aggregated posterior is constructed as in the non-adaptive case, i.e.\ a draw $f$ from the aggregated posterior is given in \eqref{def:aggr:spatial}.
The corresponding aggregated posterior measure takes the form
\begin{align}
\Pi_{n,m}^{g}( B|\textbf{Y})=\prod_{k=1}^m\Pi^{g,(k)}(B_k|\textbf{Y}^{(k)}),\label{def:hier:posterior}
\end{align}
for $\Pi^{g,(k)}(\cdot| \textbf{Y}^{(k)})$ the $k$th posterior distribution corresponding to the prior \eqref{def:local:hier:prior}.

\begin{theorem}\label{thm: adaptation}
Let $ f_0$ be a bounded function and assume that there exist measurable sets of functions $B_{n,m}^{(k)}$ such that for all local hierarchical priors $\Pi^{g,(k)}$ given in \eqref{def:local:hier:prior} and $\eps_n\rightarrow 0$ such that $(n/m^2)\eps_n^2\rightarrow\infty$, it holds that,  for some $c,C>0$,
\begin{align}
\label{eq: rem-mass}
\Pi^{g,(k)}( f: f\notin B_{n,m}^{(k)})\leq e^{-4(n/m) \eps_n^2},\\
\label{eq: sb-prior}
\Pi^{g,(k)}( f:\| f- f_0\|_{\infty,k}\leq\eps_n)\geq e^{-(n/m) \eps_n^2},\\
\label{eq: entropy}
\log N(c\eps_n,B_{n,m}^{(k)},\|\cdot\|_{\infty,k})\le C(n/m)\eps^2_n.
\end{align}
Then under Assumption~\ref{ass:metric}, the aggregated hierarchical posterior  given in \eqref{def:hier:posterior}, contracts around the truth with rate $\eps_n$, i.e.
\begin{align}
\E_0\Pi_{n,m}^{g}\bigl( d_n( f, f_0)\geq M_n \eps_n |\textbf{Y}\bigr)\rightarrow 0,
\end{align}
for arbitrary $M_n\rightarrow \infty$. In the distributed nonparametric regression model \eqref{def:regression} 
or the classification model \eqref{def:log:regression}, the condition $(n/m^2)\eps_n^2\rightarrow\infty$ may be relaxed to 
$m= o(n\eps_n^2/\log n)$.
\end{theorem}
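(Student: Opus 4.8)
The plan is to reduce the aggregated contraction statement to a per-machine contraction statement and then control the resulting sum over the $m$ local problems, mirroring the argument behind Theorem~\ref{thm: nonadaptive} with the single concentration-function bound replaced by the three conditions \eqref{eq: rem-mass}--\eqref{eq: entropy}. The starting point is the additivity of the squared semimetric across machines. Writing $d_n^{(k)}(f,g)^2=(m/n)\sum_{i:\,x_i\in\mathcal{D}^{(k)}}h_i(f,g)^2$ for the locally normalised semimetric, definition \eqref{def:Hellinger} gives
\begin{align*}
d_n(f,f_0)^2=\frac1m\sum_{k=1}^m d_n^{(k)}(f,f_0)^2.
\end{align*}
Since a draw $f$ from $\Pi_{n,m}^{A}(\cdot|\textbf{Y})$ agrees on $\mathcal{D}^{(k)}$ with the draw $f^{(k)}$ from the $k$th local posterior, $d_n^{(k)}(f,f_0)$ depends on $f$ only through its restriction to $\mathcal{D}^{(k)}$, and the averaging identity yields the elementary inclusion $\{f:d_n(f,f_0)\ge M_n\eps_n\}\subseteq\bigcup_{k=1}^m\{f:d_n^{(k)}(f,f_0)\ge M_n\eps_n\}$, because if every local semimetric were below $M_n\eps_n$ then so would be their average. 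By the product form \eqref{def:hier:posterior} the aggregated posterior probability of an event depending only on the restriction to $\mathcal{D}^{(k)}$ equals the corresponding probability under the local posterior $\Pi^{A,(k)}(\cdot|\textbf{Y}^{(k)})$, so a union bound gives
\begin{align*}
\E_0\Pi_{n,m}^{A}\bigl(d_n(f,f_0)\ge M_n\eps_n\,\big|\,\textbf{Y}\bigr)\le\sum_{k=1}^m\E_0\Pi^{A,(k)}\bigl(d_n^{(k)}(f,f_0)\ge M_n\eps_n\,\big|\,\textbf{Y}^{(k)}\bigr).
\end{align*}

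The second step is to bound each local term by an exponentially small quantity, so that the sum over the $m$ machines still vanishes. This is the classical posterior contraction argument of \cite{ghoshal:vaart:2007} applied to the local problem, whose effective sample size is $n/m$ and whose prior is the hierarchical prior \eqref{def:local:hier:prior}. Condition \eqref{eq: entropy} supplies, via Le Cam--Birg\'e testing theory together with Assumption~\ref{ass:metric} (which turns the $\|\cdot\|_{\infty,k}$-entropy into control of the $h_i$-entropy, hence of the $d_n^{(k)}$-entropy), tests $\phi^{(k)}$ separating $f_0$ from $\{f\in B_{n,m}^{(k)}:d_n^{(k)}(f,f_0)\ge M_n\eps_n\}$ with both error probabilities bounded by $e^{-K(n/m)M_n^2\eps_n^2}$. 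Condition \eqref{eq: sb-prior}, combined with the bounds on $K(p_{f,i},p_{f_0,i})$ and $V(p_{f,i},p_{f_0,i})$ from Assumption~\ref{ass:metric}, shows that the $\|\cdot\|_{\infty,k}$-ball of radius $\eps_n$ lies in a Kullback--Leibler neighbourhood, so the local evidence $\int\prod_{i:\,x_i\in\mathcal{D}^{(k)}}(p_{f,i}/p_{f_0,i})\,d\Pi^{A,(k)}(f)$ exceeds $e^{-(1+C)(n/m)\eps_n^2}$ outside an event of small probability. Finally the remaining-mass condition \eqref{eq: rem-mass}, whose exponent $4$ strictly exceeds the constant produced by the evidence lower bound, controls the posterior mass on the complement of $B_{n,m}^{(k)}$. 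Assembling the three pieces in the standard way bounds each local term by $e^{-c'(n/m)\eps_n^2}$ for some $c'>0$, whence
\begin{align*}
\E_0\Pi_{n,m}^{A}\bigl(d_n(f,f_0)\ge M_n\eps_n\,\big|\,\textbf{Y}\bigr)\lesssim m\,e^{-c'(n/m)\eps_n^2}.
\end{align*}

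The main obstacle, and the source of the two distinct conditions on $m$, lies in controlling the probability of the event on which the local evidence is too small. For a general model this probability is bounded only polynomially, through a Chebyshev inequality on the centred log-likelihood ratio whose variance is governed by $V(p_{f,i},p_{f_0,i})\lesssim\eps_n^2$; this yields a bound of order $m/(n\eps_n^2)$ per machine, and summing over the $m$ machines produces a term of order $m^2/(n\eps_n^2)$, which vanishes precisely under $(n/m^2)\eps_n^2\to\infty$. For the nonparametric regression model \eqref{def:regression} and the logistic model \eqref{def:log:regression} the log-likelihood ratios are sub-Gaussian, respectively sub-exponential, so the evidence-failure probability is itself exponentially small; the whole bound is then dominated by $m\,e^{-c'(n/m)\eps_n^2}$, which tends to $0$ under the weaker requirement $(n/m)\eps_n^2\gg\log n$, i.e.\ $m=o(n\eps_n^2/\log n)$. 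Verifying the uniform exponential evidence bound in these two models, and checking that the factor-$4$ slack in \eqref{eq: rem-mass} indeed absorbs the constants arising from the testing and evidence steps, are the two points that will require the most care.
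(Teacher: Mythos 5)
Your proposal is correct and follows essentially the same route as the paper: decompose $d_n^2$ as the average of the local semimetrics $d_n^{(k)}$, apply the standard entropy/prior-mass/remaining-mass contraction argument of Ghosal--van der Vaart to each local problem with effective sample size $n/m$, and sum over the $m$ machines, with the evidence-lower-bound failure probability (polynomial in general, exponential for the Gaussian and logistic models via the paper's Lemma~\ref{lem: LB:denom}) accounting for the two alternative conditions on $m$ exactly as in the paper. The only difference is presentational: the paper invokes Theorem~4 of \cite{ghoshal:vaart:2007} and Theorems~8.19--8.20 of \cite{ghosal2017fundamentals} as black boxes where you unpack the testing and evidence steps explicitly.
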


The proof of the theorem is deferred to Section~\ref{sec:proof:adaptation}.
{
\begin{remark}
One can consider adaptation to other type of hyper-parameters as well, for instance choosing the regularity or truncation parameters in a data driven way. Our results can be extend to these cases as well in a straightforward way. However, such approaches are, typically, computationally substantially more expensive and hence less popular in practice than rescaling the process. Therefore, we have refrained from including such cases in our analysis.
\end{remark}}

\section{Examples}\label{sec:examples}
In this section we apply the general results of the preceding section to obtain (adaptive) minimax contraction rates 
for regression and classification, with priors built on integrated Brownian motion and the Mat\'ern process.

\subsection{Rescaled Integrated Brownian Motion}\label{sec:IBM}
The "released" $\ell$-fold integrated Brownian motion is defined as 
\begin{align}
G_t:=B\sum_{j=0}^{\ell}\frac{Z_jt^j}{j!}+(I^{\ell}W)_t,\quad t\in[0,1],\label{def:IBM}
\end{align}
with $B>0$, and  i.i.d.\ standard normal random variables $(Z_j)_{j=0}^{\ell}$ independent from a Brownian motion $W$. The functional operator $I^{\ell}$ denotes taking repeated indefinite integrals and has the purpose of smoothing out the Brownian motion sample paths. Formally we define $(If)_t=\int_0^t f(s)\,ds$ and next $I^1=I$ and $I^{\ell}=I^{\ell-1}I$ for  $\ell\ge 2$. Because the sample paths of Brownian motion are  H\"older continuous of order almost $1/2$ (almost surely), the process $t\mapsto (I^\ell W)_t$ and hence the process $t\mapsto G_t$ has sample paths that are $\ell$ times differentiable with $\ell$th derivative H\"older of order almost $1/2$. The polynomial term in $t\mapsto G_t$ allows this process to have nonzero derivatives at zero, where the scaling by $B$ of this fixed-dimensional part of the prior is relatively inessential. The prior process $t\mapsto G_t$ in \eqref{def:IBM} is an appropriate model for a function that is regular of order $\ell+1/2$: it is known  from \cite{vaart:zanten:2008} that the resulting posterior contraction rate is equal to the minimax rate for a $\beta$-H\"older function $f_0$ if and only if $\beta=\ell+1/2$. For $\beta\not=\ell+1/2$, the posterior still contracts, but at a suboptimal rate. To remedy this, we introduce additional flexibility by rescaling the prior.

Because the integrated Brownian motion is self-similar, a time rescaling is equivalent to a space rescaling with another coefficient. We consider a time rescaling and introduce, for a fixed $\tau>0$,
\begin{align}
\label{eq: intBMresc}
G^{\tau,(k)}_t:=B_n\sum_{j=0}^{\ell} \frac{Z^{(k)}_j(\tau t)^j}{j!}+(I^{\ell}W^{(k)})_{\tau t}, \quad t\in[0,1].
\end{align}
The $(Z_j^{(k)})$ and $W^{(k)}$ are standard normal variables and a Brownian motion, as in \eqref{def:IBM}, but independently
across the local problems.
This process has been studied in \cite{vaart:zanten:2007} (or see Section~11.5 of \cite{ghosal2017fundamentals}) in the non-distributed nonparametric regression setting. The authors demonstrated that for a given $\beta\leq \ell+1$, the scale parameter $\tau:=\tau_n=n^{(\ell+1/2-\beta)/((\ell+1/2)(2\beta+1))}$ leads to the optimal contraction rate in the minimax sense at a $\beta$-regular function $f_0$, i.e.
$$\Pi^{\tau_n}\left(f:\| f- f_0\|_n\geq M_n n^{-\beta/(2\beta+1)}|\textbf{Y}\right)\rightarrow 0,$$
for arbitrary $M_n$ tending to infinity. Our first result shows that this same choice of length scale in the local prior distributions leads to the same contraction rate for the distributed, aggregated posterior distribution.

\begin{corollary}\label{cor:IBM:nonadaptresc}
Consider the distributed nonparametric regression model \eqref{def:regression} or the classification model \eqref{def:log:regression} with a function $f_0\in C^\beta([0,1])$, for $\beta>1/2$. In each local problem endow $f$ with the rescaled integrated Brownian motion prior \eqref{eq: intBMresc} with $\ell+1/2\ge \beta$ with 
 $\tau=\tau_n\asymp n^{(\ell+1/2-\beta)/((\ell+1/2)(2\beta+1))}$ and $\exp\{n^{1/(1+2\beta)}/m\} \geq B_n^2\geq n^{\frac{-1+2(\ell-\beta)\vee 0}{1+2\beta}}m$. 
Then for $m=o( n^{1/(2\beta+1)}/\log n)$, the aggregated posterior \eqref{def:aggr:post} achieves the minimax contraction rate, i.e.
$$\E_0\Pi_{n,m}\left( f: d_n( f, f_0)\geq M_n n^{-\beta/(2\beta+1)}|\textbf{Y}\right)\rightarrow 0,$$
for arbitrary $M_n\rightarrow\infty$. In case of the regression model \eqref{def:regression}, the pseudo-metric $d_n$ can be replaced by the empirical $L_2$-metric $\|\cdot\|_n$.
\end{corollary}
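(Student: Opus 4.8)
The plan is to deduce the corollary from Theorem~\ref{thm: nonadaptive} with $\eps_n=n^{-\beta/(2\beta+1)}$. For this rate $(n/m)\eps_n^2=n^{1/(2\beta+1)}/m$, so the assumption $m=o(n^{1/(2\beta+1)}/\log n)$ is exactly the relaxed condition $m=o(n\eps_n^2/\log n)$ permitted for the regression and classification models. It therefore suffices to verify the single hypothesis of that theorem, namely the local concentration inequality $\phi_{f_0}^{(k)}(\eps_n)\le (n/m)\eps_n^2$, uniformly in $k=1,\dots,m$. Writing $\phi_{f_0}^{(k)}=\phi_{f_0}^{(k),\mathrm{appr}}+\phi_0^{(k)}$ for the approximation (first) term and the small-ball (second) term in \eqref{def:conc:function}, I would bound the two contributions separately, in each case transferring the non-distributed estimates of \cite{vaart:zanten:2007} (see also Section~11.5 of \cite{ghosal2017fundamentals}) from $[0,1]$ to the subinterval $I^{(k)}$ of length $1/m$.

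\textbf{Small-ball term.} The key point is to gain the extra factor $1/m$ relative to the global bound. Restricted to $I^{(k)}$ the rescaled process \eqref{eq: intBMresc} reads $t\mapsto B_n\sum_{j=0}^\ell Z_j^{(k)}(\tau_n t)^j/j!+(I^\ell W^{(k)})_{\tau_n t}$, where $\tau_n t$ now ranges over an interval of length $\tau_n/m$. Using the Markov property of the vector $\bigl((I^\ell W)_s,\dots,W_s\bigr)$ to restart the integrated Brownian motion at the left endpoint, together with its self-similarity $(I^\ell W)_{cs}\isd c^{\ell+1/2}(I^\ell W)_s$, the fresh integrated part on $I^{(k)}$ has sup-norm distributed as $(\tau_n/m)^{\ell+1/2}$ times that on the unit interval. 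The standard small-ball estimate then yields a contribution of order $(\tau_n/m)\,\eps_n^{-1/(\ell+1/2)}$, and the choice $\tau_n\asymp n^{(\ell+1/2-\beta)/((\ell+1/2)(2\beta+1))}$ makes this exactly of order $(n/m)\eps_n^2$. The remaining finite-dimensional part contributes an exponent of order $\log\bigl(B_n\tau_n^{\ell}/\eps_n\bigr)$; since $\log\tau_n\asymp\log n$, both the upper bound $B_n^2\le\exp\{n^{1/(1+2\beta)}/m\}$ (which gives $\log B_n^2\le(n/m)\eps_n^2$) and the range $m=o(n^{1/(2\beta+1)}/\log n)$ (which absorbs the $\log n$) are used to keep this below $(n/m)\eps_n^2$.

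\textbf{Approximation term.} Here I would construct, for each $k$, an element $h^{(k)}\in\HH^{(k)}$ with $\|f_0-h^{(k)}\|_{\infty,k}\le\eps_n$ and $\|h^{(k)}\|^2_{\HH^{(k)}}\le (n/m)\eps_n^2$, by localising the decentring construction of \cite{vaart:zanten:2007} to $I^{(k)}$. Since $f_0\in C^\beta([0,1])$ its restriction to $I^{(k)}$ is still $\beta$-H\"older with the same constant, so the same convolution/Taylor approximation at the resolution dictated by $\tau_n$ attains bias $\eps_n$. Because the squared RKHS norm of the integrated part is an integral of $(h^{(\ell+1)})^2$ over $I^{(k)}$, restricting from $[0,1]$ to an interval of length $1/m$ supplies the gain of a factor $1/m$ relative to the global estimate, turning the non-distributed bound $n\eps_n^2$ into $(n/m)\eps_n^2$. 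The lower bound $B_n^2\ge n^{(-1+2(\ell-\beta)\vee 0)/(1+2\beta)}m$ guarantees that the scaled polynomial part carries enough mass to reproduce the low-order behaviour of $f_0$ at the left endpoint without inflating the RKHS norm beyond $(n/m)\eps_n^2$; this constraint is active precisely in the oversmoothing regime $\ell>\beta$.

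\textbf{Main obstacle and conclusion.} The hard part will be the uniformity over $k$: unlike on $[0,1]$, the restart values $(I^{\ell-j}W)_{\tau_n(k-1)/m}$ at the left endpoint of $I^{(k)}$ are $k$-dependent and can be as large as order $\tau_n$, so I must check that their contribution to both the small-ball and the approximation terms is dominated, uniformly in $k$, by the $B_n$-controlled polynomial part; this is where the interplay of $1/m$, $\tau_n$ and $B_n$ is genuinely delicate. Once the bounds $\phi_{f_0}^{(k),\mathrm{appr}}\le (n/m)\eps_n^2$ and $\phi_0^{(k)}\le (n/m)\eps_n^2$ are established for every $k$, Theorem~\ref{thm: nonadaptive} delivers contraction at rate $\eps_n=n^{-\beta/(2\beta+1)}$ in $d_n$. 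Finally, for the Gaussian regression model \eqref{def:regression} one has $h_i(f,g)^2\asymp(f(x_i)-g(x_i))^2$, so $d_n$ is equivalent to the empirical norm $\|\cdot\|_n$ and the statement transfers to $\|\cdot\|_n$, as claimed.
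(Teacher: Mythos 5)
Your proposal follows essentially the same route as the paper's proof: reduce to Theorem~\ref{thm: nonadaptive} with $\eps_n=n^{-\beta/(2\beta+1)}$, split the local concentration function into the decentring and small-ball parts, gain the factor $1/m$ by localising the RKHS integral to $I^{(k)}$ and by restarting and self-similarly rescaling the integrated Brownian motion (the paper's Lemma~\ref{lem: IBM-int}), and use the two-sided bounds on $B_n$ exactly as you describe to control the polynomial component in each term. The uniformity-in-$k$ obstacle you flag is handled in the paper by Anderson's lemma (for the lower small-ball bound) and the Gaussian correlation inequality (for the upper bound), which reduce the $k$-dependent restart terms to a harmless $\log(1/\eps_n)$ contribution absorbed by the assumptions on $m$ and $B_n$.
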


Thus the aggregated posterior contracts at the optimal rate, provided that the number of machines does not increase more than a certain polynomial in the number of data points. 

Unfortunately, the corollary employs a scaling rate $\tau_n$ that depends on the smoothness $\beta$ of the true function, which is typically unknown in practice. To remedy this, we consider a data-driven procedure for selecting $\tau$. In each local problem we choose a random scale factor $\tau$, independently from the variables $(Z_j^{(k)})$ and $W^{(k)}$ and independently across the problems, from a hyper-prior distribution with Lebesgue density $g_{\ell,n,m}$ satisfying, for every $\tau>0$,
\begin{align}\label{def:hyperprior}
C_1 \exp\{-D_1n^{\frac{1}{2(\ell+1)}}\tau^{\frac{\ell+1/2}{\ell+1}}/m\}
\leq g_{\ell,n,m}(\tau)\leq C_2 \exp\{-D_2n^{\frac{1}{2\ell+2}} \tau^{\frac{\ell+1/2}{\ell+1}}/m\},
\end{align}
where  $C_1$, $D_1$, $C_2$, $D_2$ are positive constants. The following corollary shows that this procedure results in rate-optimal recovery of the underlying truth.

\begin{corollary}\label{cor:adaptive:IBM}
Consider the distributed nonparametric regression model \eqref{def:regression} or the classification model \eqref{def:log:regression}  with a function $f_0\in C^\beta([0,1])$, for $\beta>1/2$. In each local problem endow $f$ with the hierarchical prior \eqref{def:local:hier:prior} built on the randomly rescaled integrated Brownian motion prior given in \eqref{eq: intBMresc} with 
$\ell+1/2\ge \beta$ and $\exp\{n^{1/(2+2\ell)}/m\}\geq B_n^2\geq  n^{(\ell-1)\vee 0}m$ and hyper-prior density $g_{\ell,n,m}$ satisfying \eqref{def:hyperprior}. Then, for $m=o(n^{1/(2\ell+2)}/\log n)$, the aggregated posterior \eqref{def:hier:posterior} adapts to the optimal minimax contraction rate, i.e
$$\E_0\Pi_{n,m}^{g}\left( f:\, d_n( f, f_0)\geq M_n n^{-\beta/(2\beta+1)}|\textbf{Y}\right)\rightarrow 0,$$
for arbitrary $M_n\rightarrow\infty$. In case of the regression model \eqref{def:regression}, the pseudo-metric $d_n$ can be replaced by the empirical $L_2$-metric $\|\cdot\|_n$.
\end{corollary}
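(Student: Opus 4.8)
The plan is to derive Corollary~\ref{cor:adaptive:IBM} as an application of the general adaptation theorem, Theorem~\ref{thm: adaptation}, with the target rate $\eps_n = n^{-\beta/(2\beta+1)}$. The entire argument then reduces to constructing sieves $B_{n,m}^{(k)}$ and verifying the three conditions \eqref{eq: rem-mass}, \eqref{eq: sb-prior} and \eqref{eq: entropy} for the local hierarchical prior \eqref{def:local:hier:prior} built on the randomly rescaled integrated Brownian motion \eqref{eq: intBMresc}. Since the prior processes are identically distributed and independent across machines, the subintervals $I^{(k)}$ are equidistant of length $1/m$, and $f_0\in C^\beta([0,1])$ restricts to a uniformly $\beta$-H\"older function on each $I^{(k)}$, it suffices to verify the three bounds for a single representative local problem, uniformly in $k$. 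With $\eps_n=n^{-\beta/(2\beta+1)}$ one has $n\eps_n^2=n^{1/(2\beta+1)}$, so the relaxed machine condition of Theorem~\ref{thm: adaptation} reads $m=o(n^{1/(2\beta+1)}/\log n)$; the stated, more stringent requirement $m=o(n^{1/(2\ell+2)}/\log n)$ is what the prior-mass computations will actually force.

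For the prior-mass (small-ball) bound \eqref{eq: sb-prior} I would fix a single deterministic scale $\bar\tau_n$, essentially the local analogue of the oracle scale $\tau_n\asymp n^{(\ell+1/2-\beta)/((\ell+1/2)(2\beta+1))}$ of Corollary~\ref{cor:IBM:nonadaptresc}, and invoke the known concentration-function estimate for the rescaled integrated Brownian motion (van der Vaart--van Zanten, or Section~11.5 of Ghosal--van der Vaart) transported to the subinterval $\mathcal{D}^{(k)}$ by self-similarity. This yields $\phi_{f_0}^{\bar\tau_n,(k)}(\eps_n)\lesssim (n/m)\eps_n^2$ and hence, through the standard decentering inequality, a single-scale lower bound $\Pi^{\bar\tau_n,(k)}(\| f- f_0\|_{\infty,k}\le\eps_n)\ge e^{-(n/m)\eps_n^2}$ up to constants absorbed into the rate. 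The lower bound in the hyper-prior specification \eqref{def:hyperprior} guarantees that $g_{\ell,n,m}$ is bounded below on a neighborhood of $\bar\tau_n$ of suitable width, which transfers this estimate to the mixture prior \eqref{def:local:hier:prior} and gives \eqref{eq: sb-prior}. This is where the factor $B_n^2\ge n^{(\ell-1)\vee 0}m$ enters, controlling the contribution of the polynomial (fixed-dimensional) part of \eqref{eq: intBMresc}.

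For the remaining-mass and entropy bounds \eqref{eq: rem-mass} and \eqref{eq: entropy} I would take the sieve to be a scale-truncated Cameron--Martin decomposition, $B_{n,m}^{(k)}=\{f = g+h:\ \tau\le\overline\tau_n,\ \|g\|_{\infty,k}\le\delta_n,\ h\in R_n\HH^{\tau,(k)}_1\}$, the union over admissible $\tau$ of a small sup-norm ball plus an RKHS ball of radius $R_n$. The excess mass then splits into (i) the hyper-prior upper tail $\int_{\overline\tau_n}^\infty g_{\ell,n,m}(\tau)\,d\tau$, bounded using the upper bound in \eqref{def:hyperprior}, and (ii) a Gaussian complement bound at each fixed scale via Borell's isoperimetric inequality; choosing $R_n\asymp\sqrt{n/m}\,\eps_n$ makes (ii) at most $e^{-4(n/m)\eps_n^2}$ and verifies \eqref{eq: rem-mass}, while the known metric-entropy estimate for the unit RKHS ball of rescaled integrated Brownian motion, again transported to $\mathcal{D}^{(k)}$ by self-similarity, controls $\log N(c\eps_n,B_{n,m}^{(k)},\|\cdot\|_{\infty,k})$ and yields \eqref{eq: entropy}. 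The upper constraint $B_n^2\le\exp\{n^{1/(2\ell+2)}/m\}$ is used here to keep the entropy of the polynomial part negligible.

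The main obstacle is the careful bookkeeping of the self-similar rescaling on a shrinking subinterval. Restricting the rescaled process \eqref{eq: intBMresc} to an interval of length $1/m$ and simultaneously rescaling time by $\tau$ produces a single effective scale parameter together with a modified multiplicative constant, and it is the interaction among the interval length $1/m$, the local sample size $n/m$, the scale $\tau$, and the factor $B_n$ that must be tracked so as to land all three exponents on $(n/m)\eps_n^2$ simultaneously and uniformly over the admissible scale range. In particular, the worst case $\beta=\ell+1/2$ in the entropy and tail estimates is what forces the hyper-prior exponent scaling in \eqref{def:hyperprior} and, ultimately, the stronger machine budget $m=o(n^{1/(2\ell+2)}/\log n)$; ensuring that the three conditions hold uniformly over the unknown $\beta\in(1/2,\ell+1/2]$ with one and the same, $\beta$-free, hyper-prior $g_{\ell,n,m}$ is the crux of the adaptation argument.
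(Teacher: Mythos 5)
Your proposal is correct and follows essentially the same route as the paper: reduce to Theorem~\ref{thm: adaptation} with $\eps_n=n^{-\beta/(2\beta+1)}$, take the sieve to be a union over a truncated scale range of $K_{n,m}\mathbb{H}_1^{\tau,(k)}+\eps_n\mathbb{B}_1^{(k)}$ with $K_{n,m}\asymp\sqrt{(n/m)}\,\eps_n$, bound the remaining mass by the hyper-prior tail plus Borell's inequality, obtain the prior mass from the local concentration-function bounds at the oracle scale combined with the lower bound in \eqref{def:hyperprior}, and control the sieve entropy via the RKHS unit-ball entropy (which the paper derives through the Kuelbs--Li small-ball/entropy duality rather than citing it directly, a cosmetic difference). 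Your remarks on the roles of $B_n$, the worst case $\beta=\ell+1/2$, and the $\beta$-free machine budget $m=o(n^{1/(2\ell+2)}/\log n)$ all match the paper's bookkeeping.
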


The corollary shows that the aggregated posterior with randomly rescaled local priors contracts at the optimal rate for a true function of given H\"older smoothness level, as long as the hyper-prior and the number of experts are chosen appropriately.

Proofs for the results in this section are given in Section~\ref{sec:proof:adaptive:IBM}.

\subsection{Mat\'ern process}\label{sec:Matern}
The Mat\'ern process is a popular prior, particularly in spatial statistics (see e.g. \cite{rasmussen:williams:2006}, page 84).
It is a stationary mean zero  Gaussian process with spectral density
\begin{align}\label{def:Matern:spectral}
\rho_{\alpha,\tau}(\lambda)=C_{\alpha,d}\tau^{2\alpha}(c_{\alpha,d}\tau^{2}+\|\lambda\|^2)^{-\alpha-d/2},
\end{align}
where $\alpha,\tau >0$ are parameters,  $d$ is the dimension (we shall restrict to $d=1$) and $c_{\alpha,d},C_{\alpha,d}>0$ are constants. The sample paths of the Mat\'ern process are Sobolev smooth of order $\alpha$, and $\tau$ is a scale parameter: if $t\mapsto G_t$ is Mat\'ern with parameter $\tau=1$, then $t\mapsto G_{\tau t}$ is Mat\'ern with parameter $\tau$. (For consistency of notation we took $\tau=1/\ell$ in  \cite{rasmussen:williams:2006}, page 84.) The present time-rescaled Mat\'ern process is different from the space-rescaled version $t\mapsto \tau^{\alpha}G_t$ (for any $\alpha$) and has been less studied. In Section~\ref{sec:tech:lemma} we derive bounds on its small ball probability and the entropy of the unit ball of its reproducing kernel Hilbert space. These quantities,  in their dependence on $\tau$, are important drivers of posterior contraction rates, and of independent interest. For computation the Mat\'ern process can be spatially represented with the help of Bessel functions. 

First we consider the non-adaptive setting where the regularity parameter $\beta>0$ of the unknown function of interest $ f_0$ is assumed to be known. We choose each local prior equal to a Mat\'ern process with regularity parameter $\alpha$ satisfying $\beta\leq \alpha$, scaled by $\tau_n=n^{\frac{\alpha-\beta }{\alpha(1+2\beta)}}$ to compensate for the possible mismatch between $\alpha$ and $\beta$. It is known that the Mat\'ern prior gives minimax optimal contraction rates if used on the full data \cite{vdVvZJMLR}. The following corollary asserts that, in the distributed setting, the aggregated posterior corresponding to this choice of prior also achieves the minimax contraction rate.

\begin{corollary}\label{cor:Matern:nonadapt}
Consider the distributed nonparametric regression model \eqref{def:regression} or the classification model \eqref{def:log:regression} with a function $f_0\in C^\beta([0,1])$, for $\beta>1/2$. In each local problem endow $f$ with the rescaled Mat\'ern process prior with regularity parameter $\alpha$  satisfying $\alpha\ge \beta$ and $\alpha+1/2 \in\mathbb{N}$ and scale parameter $\tau_n=n^{(\alpha-\beta)/(\alpha(1+2\beta))}$. Then for $m=o(n^{1/(1+2\beta)}/\log n)$, the corresponding aggregated posterior \eqref{def:aggr:post} achieves the minimax contraction rate $n^{-\beta/(1+2\beta)}$, i.e.
$$\E_0\Pi_{n,m}\big( f: d_n(f,f_0)\geq M_n n^{-\beta/(2\beta+1)}|\textbf{Y}\big)\rightarrow 0,$$
for arbitrary $M_n\rightarrow\infty$. In case of the regression model \eqref{def:regression}, the pseudo-metric $d_n$ can be replaced by the empirical $L_2$-metric $\|\cdot\|_n$.
\end{corollary}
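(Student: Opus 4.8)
The plan is to derive Corollary~\ref{cor:Matern:nonadapt} from the general Theorem~\ref{thm: nonadaptive} by checking its sole hypothesis for the time-rescaled Mat\'ern prior at the minimax rate $\eps_n=n^{-\beta/(2\beta+1)}$: it suffices to show that the local concentration function satisfies $\phi_{f_0}^{(k)}(\eps_n)\lesssim (n/m)\eps_n^2$ uniformly in $k\in\{1,\dots,m\}$. Assumption~\ref{ass:metric} is already available for both the Gaussian regression model \eqref{def:regression} and the logistic model \eqref{def:log:regression} (the discussion following the assumption), so no further model-specific work is needed there, and the relaxed machine condition $m=o(n\eps_n^2/\log n)=o(n^{1/(1+2\beta)}/\log n)$ furnished by the theorem for these two models reproduces exactly the stated restriction on $m$. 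Thus the entire argument reduces to the two pieces of the concentration function.

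For the small-ball term I would use self-similarity: on the subinterval $I^{(k)}$ of length $1/m$ the rescaled process $G^{\tau_n}_t=G_{\tau_n t}$ is, by stationarity and a change of variables, a copy of the unscaled Mat\'ern process on an interval of length $\tau_n/m$, so the estimate is the same for every $k$. The small-ball bound of Section~\ref{sec:tech:lemma} then gives $-\log\Pi(\|f\|_{\infty,k}<\eps_n)\lesssim (\tau_n/m)\,\eps_n^{-1/\alpha}+\log(1/\eps_n)$. With $\tau_n=n^{(\alpha-\beta)/(\alpha(1+2\beta))}$ one has $\tau_n\eps_n^{-1/\alpha}\asymp n^{1/(1+2\beta)}=n\eps_n^2$, so the leading term is of the required order $(n/m)\eps_n^2$; the remainder $\log(1/\eps_n)\asymp\log n$ is negligible against $(n/m)\eps_n^2$ precisely because $m=o(n\eps_n^2/\log n)$, which is where the logarithmic factor in the machine condition originates.

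For the approximation term I would observe that $f_0\in C^\beta([0,1])$ restricts to a $C^\beta$ function on $I^{(k)}$ and construct an element $h$ of the restricted RKHS $\mathbb{H}^{(k)}$ with $\|f_0-h\|_{\infty,k}\le\eps_n$, for example by mollifying a $C^\beta$ extension of $f_0|_{I^{(k)}}$ at a suitable bandwidth and invoking the (scaled) Sobolev description of the Mat\'ern RKHS, where the assumption $\alpha+1/2\in\NN$ keeps this space explicit. The RKHS estimates of Section~\ref{sec:tech:lemma}, evaluated at the scale $\tau_n$ and interval length $1/m$, then bound $\inf_{h:\,\|f_0-h\|_{\infty,k}\le\eps_n}\|h\|_{\mathbb{H}^{(k)}}^2$ by a quantity of the same order $(n/m)\eps_n^2$, the scale $\tau_n$ being chosen exactly so that this approximation cost and the small-ball cost balance. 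Summing the two pieces yields $\phi_{f_0}^{(k)}(\eps_n)\lesssim (n/m)\eps_n^2$ uniformly in $k$, which is the hypothesis of Theorem~\ref{thm: nonadaptive}.

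Applying that theorem in its regression/classification form gives $\E_0\Pi_{n,m}(f:d_n(f,f_0)\ge M_n\eps_n\mid\textbf{Y})\to0$. To pass from $d_n$ to the empirical $L_2$-metric in the Gaussian model, I would note that for the $N(f(x_i),\sigma^2)$ densities the squared Hellinger distances $h_i(f,f_0)^2$ are comparable to $(f(x_i)-f_0(x_i))^2$ as long as the functions stay uniformly bounded, which holds on the high-probability sieve since the rescaled Mat\'ern has uniformly bounded sample paths; hence $d_n(f,f_0)\asymp\|f-f_0\|_n$ on the relevant event and the statement transfers. I expect the main obstacle to be the approximation term on the short interval: one must control the joint dependence on the scale $\tau_n\ge1$ and the interval length $1/m$ so that the bound carries the clean factor $1/m$ without spurious polynomial-in-$m$ losses and with only the controlled $\log n$ remainder. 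This, together with verifying that the \emph{time}-rescaled (rather than space-rescaled) Mat\'ern process admits the favourable small-ball and RKHS estimates, is the technical heart supplied by Section~\ref{sec:tech:lemma}.
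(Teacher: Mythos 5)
Your proposal follows essentially the same route as the paper: reduce to Theorem~\ref{thm: nonadaptive} (with the relaxed condition $m=o(n\eps_n^2/\log n)$ for the two models), then bound the local concentration function by (i) using stationarity and the scale-parameter property to identify the restricted process with a Mat\'ern process of scale $\tau_n/m$ on $[0,1]$ and invoking the small-ball bound of Lemma~\ref{lem:small:ball:rescaled:Matern}, and (ii) mollifying a H\"older extension of $f_0$ and using the Sobolev characterisation of the RKHS (Lemma~\ref{LemmaMaternRKHS}) to control the approximation term, both pieces being of order $(n/m)\eps_n^2$ at the stated $\tau_n$. The only cosmetic difference is that you justify the passage from $d_n$ to $\|\cdot\|_n$ by directly comparing squared Hellinger distances with $(f-f_0)(x_i)^2$ for bounded functions, whereas the paper cites Theorem~3.3 of \cite{vaart:zanten:2008}; these are equivalent.
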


Next we consider the local hierarchical priors \eqref{def:local:hier:prior} with hyper-prior density satisfying, for every $\tau>0$,
\begin{align}
\label{def:hyperprior:Mat}
 c_1\exp\{-d_1n^{\frac{1}{2\alpha+1}} \tau^{\frac{\alpha}{\alpha+1/2}}/m\}
\leq g_{\alpha,n,m}(\tau)\leq c_2 \exp\{-d_2n^{\frac{1}{2\alpha+1}} \tau^{\frac{\alpha}{\alpha+1/2}}/m\},
\end{align}
where $c_1,d_1,c_2,d_2$ are positive constants. The priors in the local problems are then chosen to be Mat\'ern with random scales drawn from $g_{\alpha,n,m}$, and the aggregated distributed posterior follows our general construction in \eqref{def:hier:posterior}. The following corollary shows that using Mat\'ern processes yields rate-optimal contraction  over a range of regularity classes, similarly to the integrated Brownian motion prior case.

\begin{corollary}\label{cor:matern:adapt}
Consider the distributed nonparametric regression model \eqref{def:regression} or the classification model \eqref{def:log:regression} with a function $f_0\in C^\beta([0,1])$, for  $\beta>1/2$. In each local problem endow $f$
with the hierarchical prior built on the randomly rescaled Mat\'ern Process with regularity parameter $\alpha$ satisfying $\alpha\ge \beta$ and $\alpha+1/2\in\mathbb{N}$ and scale drawn from a density satisfying \eqref{def:hyperprior:Mat}. Then for $m=o(n^{1/(1+2\alpha)}/\log n)$ the aggregated posterior \eqref{def:hier:posterior} adapts to the optimal minimax contraction rate, i.e.
$$\E_0\Pi_{n,m}^{g}\left( f:\, d_n(f, f_0)\geq M_n  n^{-\beta/(2\beta+1)}|\textbf{Y}\right)\rightarrow 0,$$
for arbitrary $M_n\rightarrow\infty$. In case of the regression model \eqref{def:regression}, the pseudo-metric $d_n$ can be replaced by the empirical $L_2$-metric $\|\cdot\|_n$.
\end{corollary}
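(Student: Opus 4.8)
The plan is to deduce the corollary from the general adaptation result, Theorem~\ref{thm: adaptation}, applied with the global rate $\eps_n=n^{-\beta/(2\beta+1)}$. First I would check the side condition on the number of machines: since $n\eps_n^2=n^{1/(2\beta+1)}$ and $\alpha\ge\beta$ gives $n^{1/(1+2\alpha)}\le n^{1/(1+2\beta)}$, the hypothesis $m=o(n^{1/(1+2\alpha)}/\log n)$ implies $m=o(n\eps_n^2/\log n)$, so the relaxed condition of Theorem~\ref{thm: adaptation} for the regression and classification models is met. It then remains to exhibit local sieves $B_{n,m}^{(k)}$ verifying the prior-mass bound \eqref{eq: sb-prior}, the remaining-mass bound \eqref{eq: rem-mass}, and the entropy bound \eqref{eq: entropy}, all at level $(n/m)\eps_n^2$; the whole argument runs identically in each local problem $k$, with $n/m$ playing the role of the local effective sample size. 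The structure mirrors the integrated Brownian motion case, Corollary~\ref{cor:adaptive:IBM}, with the Mat\'ern-specific small-ball and RKHS-entropy estimates of Section~\ref{sec:tech:lemma} replacing their counterparts there.

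For the prior-mass condition \eqref{eq: sb-prior} I would centre the argument on the oracle scale $\tau_*\asymp n^{(\alpha-\beta)/(\alpha(1+2\beta))}$ of the non-adaptive Corollary~\ref{cor:Matern:nonadapt}. Using the approximation of $f_0\in C^\beta$ by elements of the Mat\'ern RKHS together with the rescaled small-ball estimate of Section~\ref{sec:tech:lemma}, the single-scale concentration function \eqref{def:conc:function} satisfies $\phi^{(\tau_*,k)}_{f_0}(\eps_n)\lesssim (n/m)\eps_n^2$, whence $\Pi^{\tau_*,(k)}(\|f-f_0\|_{\infty,k}\le\eps_n)\ge e^{-c_0(n/m)\eps_n^2}$ for some constant $c_0$. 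The exponent $n^{1/(2\alpha+1)}\tau^{\alpha/(\alpha+1/2)}/m$ in \eqref{def:hyperprior:Mat} is, at $\tau=\tau_*$, of the exact order $(n/m)\eps_n^2$, so the hyper-prior lower bound keeps mass $\gtrsim e^{-c_1(n/m)\eps_n^2}$ on a fixed-proportion neighbourhood of $\tau_*$; integrating the single-scale bound against the hyper-prior over this neighbourhood gives \eqref{eq: sb-prior}, up to an adjustment of the constant in $\eps_n$ that leaves the rate unchanged.

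For the remaining-mass and entropy conditions I would take $B_{n,m}^{(k)}$ to be a union, over scales $\tau$ in a range $[\underline\tau_n,\overline\tau_n]$, of Borell-type sets $M\big(\mathbb{H}_1^{\tau,(k)}\big)+\eps_n\,\mathbb{U}$, where $\mathbb{H}_1^{\tau,(k)}$ is the unit ball of the RKHS of the $\tau$-rescaled Mat\'ern process, $\mathbb{U}$ the $L_\infty$ unit ball, and $M\asymp\sqrt{(n/m)\eps_n^2}$. The mass carried by scales outside $[\underline\tau_n,\overline\tau_n]$ is controlled by the hyper-prior tail through the upper bound in \eqref{def:hyperprior:Mat}, while for each admissible scale the Gaussian mass outside the Borell set is bounded by the Borell--Sudakov--Tsirelson inequality combined with the small-ball estimate; choosing the endpoints $\underline\tau_n,\overline\tau_n$ and the radius $M$ appropriately makes both contributions at most $e^{-4(n/m)\eps_n^2}$, giving \eqref{eq: rem-mass}. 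For \eqref{eq: entropy} I would cover $[\underline\tau_n,\overline\tau_n]$ by a fine net of scales, apply the metric-entropy bound for the Mat\'ern RKHS unit ball from Section~\ref{sec:tech:lemma} at each net point (using that nearby scales yield comparable RKHS balls in $\|\cdot\|_{\infty,k}$), and sum the logarithms of the covering numbers, with the range and net chosen so that the total is $\le C(n/m)\eps_n^2$.

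The main obstacle is that all of these estimates must hold \emph{uniformly in the scale} $\tau$ over the admissible range and be matched exactly to the exponents of the hyper-prior \eqref{def:hyperprior:Mat}: the small-ball exponent, the RKHS-entropy exponent, and the hyper-prior tail exponent $n^{1/(2\alpha+1)}\tau^{\alpha/(\alpha+1/2)}/m$ must balance so that each of the three displays lands at the common level $(n/m)\eps_n^2$. This is precisely where the time-rescaled Mat\'ern small-ball and entropy lemmas of Section~\ref{sec:tech:lemma} are indispensable, the time-rescaled process behaving differently from the better-studied space-rescaled version, and where the factor $n/m$ and the constraint $m=o(n^{1/(1+2\alpha)}/\log n)$ must be threaded consistently through every bound. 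Once the three conditions are established, Theorem~\ref{thm: adaptation} yields contraction at $\eps_n=n^{-\beta/(2\beta+1)}$ in $d_n$; the passage to the empirical $L_2$-metric $\|\cdot\|_n$ in the regression model follows from the standard comparison noted after \eqref{def:Hellinger}.
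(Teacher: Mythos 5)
Your proposal is correct and follows essentially the same route as the paper: invoke Theorem~\ref{thm: adaptation} at $\eps_n\asymp n^{-\beta/(2\beta+1)}$ with sieves of the Borell form $\bigcup_{\tau}K_{n,m}\mathbb{H}_1^{\tau,(k)}+\eps_n\mathbb{B}_1^{(k)}$ over a truncated scale range, verify the prior-mass bound by concentrating the hyper-prior near the oracle scale $\tau_*\asymp n^{(\alpha-\beta)/(\alpha(1+2\beta))}$, and control remaining mass and entropy via Borell's inequality and the time-rescaled Mat\'ern small-ball and RKHS-entropy lemmas of Section~\ref{sec:tech:lemma}. The only cosmetic differences are that the paper bounds the entropy of the union over scales directly through Lemma~\ref{LemmaMaternRKHS} rather than through a net of scales, and handles the lower scale cutoff $\tau<q_n$ simply by the boundedness of the hyper-prior density times the exponentially small interval length.
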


Proofs for the results in this section are given in Section~\ref{sec:proof:adapt:matern}.

\section{Numerical analysis}
In this section we investigate the distributed methods numerically by simulation and illustrate it 
on a real data problem. We start by a discussion of more refined aggregation techniques than \eqref{def:aggr:spatial}.
Our numerical analysis was carried out using the MatLab package {\tt gpml}.

\subsection{Aggregation techniques}\label{sec:aggregation}
In spatially distributed GP regression a draw from the aggregated posterior takes the form 
\eqref{def:aggr:spatial}, where the $\mathcal{D}^{(k)}$ are the sub-regions into which the design points are partitioned and $f^{(k)}\sim \Pi^{(k)}(\cdot|\textbf{Y}^{(k)})$. The output can be considered a weighted average of the local posteriors, with the indicator functions $1_{\mathcal{D}^{(k)}}(x)$ as weights. Although the procedure provides optimal recovery of the underlying truth, as shown in the preceding sections, the sample functions \eqref{def:aggr:spatial} are discontinuous at the boundaries of the regions $\mathcal{D}{(k)}$. The optimality implies that the discontinuities are small, but  they are visually unappealing. 

Various approaches in the literature palliate this problem. In the Patched GP method neighbouring local GPs are constrained to share nearly identical predictions on the boundary, see \cite{park:huang:2016,park:apley:2018}. In \cite{tresp:2001,rasmussen:ghahramani:2002,meeds:osindero:2006} a two-step mixture procedure was proposed, following the mixture of experts architecture of \cite{jacobs1991adaptive}. A prediction at a given point is drawn from an expert (local posterior) that is selected from a pool of experts by a latent variable, which is endowed with a prior to provide a dynamical, Bayesian procedure. 

Another method, more closely related to \eqref{def:aggr:spatial}, is to consider continuous weights instead of the discontinuous indicators $ 1_{\mathcal{D}^{(k)}}(x)$. Since the pointwise variances of a local posterior is smaller in the region where the local data lies than outside of this region, inverse pointwise variances are natural as weights. Following this idea, \cite{ng:deisenroth:14} introduced as aggregation technique 
\begin{align}
 f(x)=\sum_{k=1}^m \frac{ f^{(k)}(x)}{\sigma^2_{k}(x)}\Big/\sum_{k=1}^m\frac1{\sigma^{2}_{k}(x)},\label{def:spatial:ng}
 \end{align}
where $\sigma^{2}_k(x)$ is  the variance of $f^{(k)}(x)$ if $f^{(k)}\sim\Pi^{(k)}(\cdot|\textbf{Y}^{(k)})$. This approach provides data-driven and continuous weights. However, as shown in our numerical analysis, this leads to sub-optimal behaviour in the adaptive setting, where the scale parameter is tuned to the data. Perceived local smoothness in the data in region $\mathcal{D}^{(k)}$ will induce a small variance in the induced local posterior distribution, due to the adaptive bandwidth choice. This posterior variance will then also be relatively small outside the local region, where the local posterior is not informed by data, no matter the nature of the data in this region. The inverse variance weights then lead to overly large weights even outside of the experts' domain. That is to say that an expert will be overly confident about their knowledge of the true function in the whole space when this function is particularly smooth in this expert's own domain. 

In view of these observations we propose a new approach, which introduces more severe shrinkage outside of the local domain. As samples from the aggregated posterior, consider the weighted average 
\begin{align}
 f(x)=\sum_{k=1}^m w_k(x) f^{(k)}(x)\Bigl/\sum_{k=1}^m w_k(x),\label{def:novel:method}
 \end{align}
with weights, for $c_k$ being the geometric center of $\mathcal{D}^{(k)}$,
$$w_k(x)=\frac{e^{-\rho m^2(x-c_k)^2}}{\sigma^2_k(x)},$$
for some $\rho>0$. These weights are also continuous and data-driven and impose an exponential shrinkage, which depends on the distance from the subregion. {Furthermore, we note, that by choosing $\rho=C\log n$ and considering the one dimensional, unit interval case $\mathcal{D}^{(k)}=I^{(k)}$, the proportion of the exponential weights for points in the $k$th region and away from it can be bounded by $e^{-\rho m^2(y-c_k)^2}/e^{-\rho m^2(x-c_k)^2}\leq n^{-C} $, for $x\in I^{(k)}$ and $y\in I^{(j)}$, with $|j-k|> 2$. Hence the contribution of the local posteriors built on data sets not in the neighbourhood of the $k$th interval is negligible for $x\in I^{(k)}$. This approach provides therefore a continuous aggregated posterior which at the same time better resembles the localization properties of the standard, glue together approach than the one proposed in \cite{ng:deisenroth:14}.}  We show below numerically, both on synthetic and real world data sets, that this new aggregation technique substantially improves the performance of the distributed GP procedure, especially when the scale hyper-parameter is selected in a data-driven way.

\subsection{Synthetic datasets}\label{sec:synthetic}
In this section we investigate the performance of various distributed Gaussian process regression methods on synthetic data sets, and compare them to the benchmark: the non-distributed approach that computes the posterior distribution on all data. 

We consider recovery and confidence statements for the functional parameter $f_0$ based on $n$ independent data points  $(X_1,Y_1),\ldots,(X_n,Y_n)$
from the model
\begin{align*}
Y_i= f_0(X_i)+Z_i,\qquad Z_i\stackrel{iid}{\sim} \mathcal{N}(0,\sigma^2),\quad X_i\stackrel{iid}{\sim}U(0,1).
\end{align*}
We simulated the data with noise standard deviation $\sigma=1$ and the function $f_0$ defined by  coefficients $f_{0,j}$ relative to the cosine basis $\psi_j(x)=\sqrt{2}\cos(\pi(j-1/2)x)$, $j=1,2,...$. 

Next to the true posterior distribution, based on all data, we computed distributed posterior distributions, using four methods. Method 1 (M1) is the consensus Monte Carlo method proposed by \cite{scott:blocker:et.al:16} and applied to Gaussian Processes in \cite{szabo:vzanten:19,hadji2022optimal}. This method splits the data randomly between the machines (i.e.\ the $k$th machine receives a random subset of size $n_k=n/m$ from the observations) and compensates for working with only partial data sets by scaling the priors in the local machines by a factor $1/m$. A draw from the aggregated posterior is constructed as the average $f(x)=m^{-1}\sum_{k=1}^mf^{(k)}(x)$ of independent draws $f^{(k)}$ from each (modified) local posterior. Methods 2--4 all split the data spatially (i.e.\ the $k$th machine receives the pairs $(X_i,Y_i)$ for which $X_i\in I^{(k)}=\big(\frac{k-1}{m},\frac{k}{m}\big]$), and differ only in their aggregation technique. Method 2 (M2)  uses the standard ``glue together''  approach displayed in \eqref{def:aggr:spatial}, Method 3 (M3) uses the inverse variance weighted average \eqref{def:spatial:ng}, and Method 4 (M4) uses the exponential weights \eqref{def:novel:method}. 

All distributed methods were carried out on a single core, drawing sequentially from the local posteriors. Parallelising them over multiple cores or machines
would have  shortened the reported run times substantially, approximately by a factor $m$.

First we considered the Mat\'ern covariance kernel. We studied both a version with sample paths rescaled deterministically by the optimal length scale $\tau_n$ for the given true function $f_0$ and versions with data-based rescaling (via both empirical and hierarhical Bayes approaches) that do not use any information about $f_0$. While for the oracle choice  (depending on the typically unknown regularity $\beta$ of the underlying function $f_0$) of the scaling parameter all methods performed similarly well, for the data driven choices of the hyper-parameter there were substantial differences between the distributed Methods 1--4.  Spatially distributing the data (Methods 2 and 4) clearly outperformed random distribution (Method 1). This is in agreement with the theory, and can be explained by the inability to determine suitable scale parameters from the data in the randomly distributed case. However, it was also observed that the benefits of spatial distribution can be destroyed when smoothing out the inherent spatially discontinuities using aggregation weights that depend on the local length scales in the wrong way (Method 3).

Then we investigated whether the methods can adapt to different local regularities. We considered a true function $f_0$, which is rough in the first half of the co-domain and smooth in the second half. It is well known that stationary Gaussian processes are not appropriate for picking up different local behaviour as they localize the signal at the spectral not at the spatial domain. This can be also observed in our numerical analyzis for the non-distributed and the randomly distributed methods (BM and M1). However, by spatially dividing the data over the local machines one can pick up different local behaviours and can achieve substantially better estimations and uncertainty quantification for the smoother part of the signal than using the standard, non-distributed approach. Hence, in addition to significantly speeding up the computations, spatially distributed methods have the additional advantage of better learning the local properties of the signal by adapting to the local regularity. We deferred the corresponding numerical analysis to the supplement.

Finally, we have also investigated the popular squared exponential covariance kernel with data driven rescaling hyper-parameter, using both the empirical and hierarchical Bayes methods. Although this prior is not explicitly covered in our examples, we observe similar phenomenas as for the Mat\'ern covariance kernel. The corresponding simulation studies are deferred to the supplement.

As mentioned earlier, for adaptation we have used both the hierarchical and empirical Bayes procedures. In the empirical Bayes method we took the maximum marginal likelihood estimator (MMLE) of the scale parameter $\tau$, while in hierarchical Bayes we have endowed it with another layer of prior distribution.   The (MMLE) empirical Bayes method has been shown to behave similarly to the hierarchical Bayes method, considered in our theoretical study (see for instance \cite{sz:vaart:zanten:2013, Sniekers2,rousseau:szabo:2017,SniekersvdV2020}). In both approaches we computed first the marginal log-likelihood function on a (fine enough) grid using the  {\tt gpml} Matlab package. Then in the empirical Bayes method we selected the maximizer of this likelihood. In the hierarchical Bayes approach we used an exponential hyper-prior distribution on $\tau$ (which was approximated by a truncated geometric distribution on the chosen grid) and derived the corresponding marginal posterior of the hyper-parameter. Alternatively, one could also use the {\tt minimize} function built in the  {\tt gpml} package for estimating the hyper-parameters of the GP prior. However, this Matlab function approximates simultaneously various additional hyper-parameters as well. Since in our theoretical framework we have tuned only the length scale parameter $\tau$, for better connection to the preceding sections,  we have refrained from using this built in optimizer in the synthetic data set.simulation study.

To assess the quality of the recovery we report the $L_2$ error of estimating $f_0$ with the posterior mean. As a measure of the size of $L_2$-credible balls we report twice the root average posterior variance  $$r=2\sqrt{\int_0^1 \sigma^2(x| \textbf{X,Y})\,dx}.$$
We consider the true function to be in the credible ball if its $L_2$-distance to the posterior mean is smaller than $r$. Furthermore, we also investigate the point-wise behaviour of the posterior. We report both the length of the $95\%$ confidence interval $4\sigma(x)$ for some selected points $x$ and the corresponding local coverage probabilities. In case of the hierarchical Bayes approach we report the average credible bands with respect to the hyper-posterior.

 \subsubsection{Mat\'ern kernel}\label{sec:matern:synthetic}
 In our study with the Mat\'ern prior, we used this kernel  with regularity hyper-parameter $\alpha=3$ (see \eqref{def:Matern:spectral}),
and generated the data from the true parameter $f_0$ with coefficients $f_{0,j}=1.5\sin(j)j^{-1/2-\beta}$, with $\beta=1$, for $j\geq 4$ and $ f_{0,j}=0$ for $j\leq 3$, with respect to the cosine basis. This function $f_0$ is essentially $\beta$ smooth: $f_0$ belongs to the Sobolev space $H^{\gamma}([0,1])$ for all $\gamma<\beta$. The optimal length scale parameter of the prior is then $\tau_n=n^{(\alpha-\beta)/(1+2\beta)/\alpha}$, as seen in Section~\ref{sec:Matern}. 

We considered pairs $(n,m)$ of sample sizes and numbers of machines  equal to  $(2000,10)$, $(5000,20)$ and $(10000,50)$. 
In all test cases we repeated the experiment 100 times, except in the adaptive settings with $n\geq 5000$, where we considered only 20 repetitions, due to the overly slow non-distributed approach.
Posterior means and $95\%$ point-wise credible bands for a single experiment are visualized in Figures~\ref{fig:Mat:nonadapt},~\ref{fig:Mat:EB}, and~\ref{fig:Mat:HB}, for
the oracle (deterministic, optimal rescaling), empirical Bayes and hierarchical Bayes scaling, respectively. The average 
  $L_2$-errors, the sizes and frequentist coverages of the $L_2$ credible sets and the run times are reported in 
Tables~\ref{table: errorMatern_nonadapt}-\ref{table: timeMatern_nonadapt} and Table \ref{table:credible_nonadapt} in the supplement for the deterministic scaling, in 
Tables~\ref{table: errorMatern_EB}-\ref{table: timeMatern_EB} for the empirical Bayes method, and in Tables~\ref{table: errorMatern_HB}-\ref{table: timeMatern_HB}  for the hierarchical Bayes approach. Due to space restriction we report the point-wise analysis of these approaches in the supplement.

In the non-adaptive setting, where the GP was optimally rescaled, all methods performed similarly well. They all resulted in good estimators and reliable uncertainty statements. The run time of the distributed algorithms were similar and substantially shorter than for the non-distributed counterpart (on average below 1s in all cases).

\begin{figure*}[!t]%
\centering
\includegraphics[width=\textwidth]{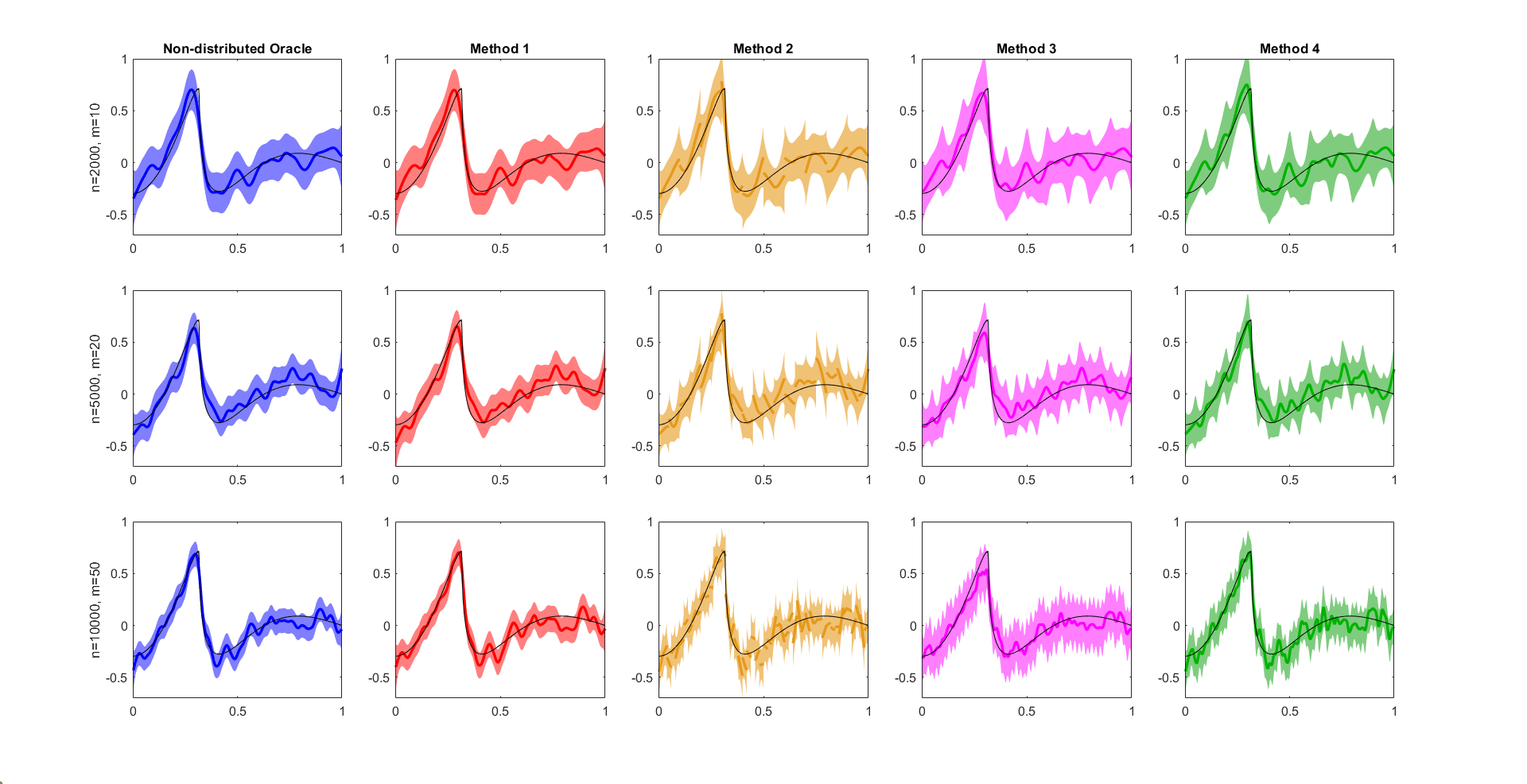}
	\caption{\scriptsize  Deterministic (oracle)  rescaling of the Mat\'ern process prior ($\alpha=3$). Benchmark and distributed GP posteriors. True function $ f_0(x)=\sum_{j=4}^{\infty}1.5j^{-3/2}\sin(j)\psi_j(x)$  drawn in black. Posterior means drawn by solid lines, surrounded by $95\%$ point-wise credible sets shaded between two dotted lines. The five columns correspond (left to right) to the non-distributed method, the distributed method with random partitioning, and the distributed methods with spatial partitioning without smoothing, with inverse variance weights and with exponential weights. From top to bottom the sample sizes are $n=2000,5000,10000$ and the number of experts  $m=10,20,50$.}
	\label{fig:Mat:nonadapt}
\end{figure*}

\begin{table}[!h]

\centering
\begin{tabular}{c|c|c|c}
(n,m)&$ (2000,10)$&  $(5000,20)$ & $(10000,50)$\\ \hline
 BM& 0.091 (0.014) & 0.068 (0.008) & 0.054 (0.007)\\
 M1& 0.093  (0.014) & 0.070 (0.008) & 0.057 (0.007)\\
 M2&  0.105 (0.016) & 0.086 (0.009) & 0.080 (0.007)\\
 M3& 0.090 (0.015) & 0.070 (0.008) & 0.069 (0.007)\\
 M4& 0.094  (0.015)  & 0.075 (0.008) & 0.065 (0.007)\\
\end{tabular}
	\caption{\scriptsize  Average $L_2$-distance between $f_0$ and posterior mean for deterministic (oracle)  rescaling of the Mat\'ern process prior (with $\alpha=3$). BM: Benchmark, Non-distributed method. M1: Random partitioning, M2: Spatial partitioning, M3: Spatial partitioning with inverse variance weights, M4: Spatial partitioning with exponential weights. Average values over 100 replications of the experiment with standard error in brackets.}
\label{table: errorMatern_nonadapt}
\end{table}

\begin{table}[!h]
\centering
	\begin{tabular}{c|c|c|c}
(n,m)&$ (2000,10)$&  $(5000,20)$ & $(10000,50)$\\ \hline
Benchmark & 0.897s (0.406s) & 9.379s (3.986s) & 53.86s (15.49s) \\
 Random & 0.121s  (0.081s) & 0.260s (0.120s) & 0.46s (0.43s)\\
Spatial & 0.114s (0.084s) & 0.235s (0.098s) & 0.44s (0.45s)\\
\end{tabular}
	
	\caption{\scriptsize Deterministic (oracle)  rescaling of the Mat\'ern process prior ($\alpha=3$). Average run time for computing the posterior. Benchmark: Non-distributed method. Method 1: Random partitioning, Method 2: Spatial partitioning.}
\label{table: timeMatern_nonadapt}
\end{table}

In the adaptive setting we considered both the empirical and hierarchical Bayes approaches. In the first method we estimate the scaling hyper-parameter with the MMLE, while in the second one we endow it with another layer of prior, resulting in a fully Bayesian, hierarchical procedure. In the latter case, as hyper-prior, we chose the exponential distribution with parameter $\lambda=1/5$ and approximated the hyper-posterior on a fine enough grid. One can observe that both data driven Bayesian methods performed similarly. In case of randomly distributing the data to local machines (M1) the aggregated posterior is over-smoothed and provides too narrow, overconfident uncertainty quantification. The standard spatially distributed approach (M2) performed well, but produced visible discontinuities. The aggregation approach (M3) provided poor and overconfident estimator using empirical Bayes method as is very evident in Figure~\ref{fig:Mat:EB}. Using hierarchical Bayes, Method 3 performed better, but the estimation accuracy and the size of the credible sets were still sub-optimally large, see Figure \ref{fig:Mat:HB} and teh corresponding tables. Our approach (M4) combined the best of both worlds: it provided continuous sample paths and maintained (and even improved) the performance of the standard glue-together spatial approach (M2), while substantially reducing the computational burden compared to the non-distributed approach. 

Here again we note, that by parallelized implementation of the algorithms the run time could be further reduced by a factor of $m$. For instance, in the last scenario of the hierarchical Bayes approach with $(n,m)=(10000,50)$ this would reduce the computation time of 10000 seconds needed for the non-distributed method to around 1 second.

\begin{figure*}[!t]%
\centering
\includegraphics[width=\textwidth]{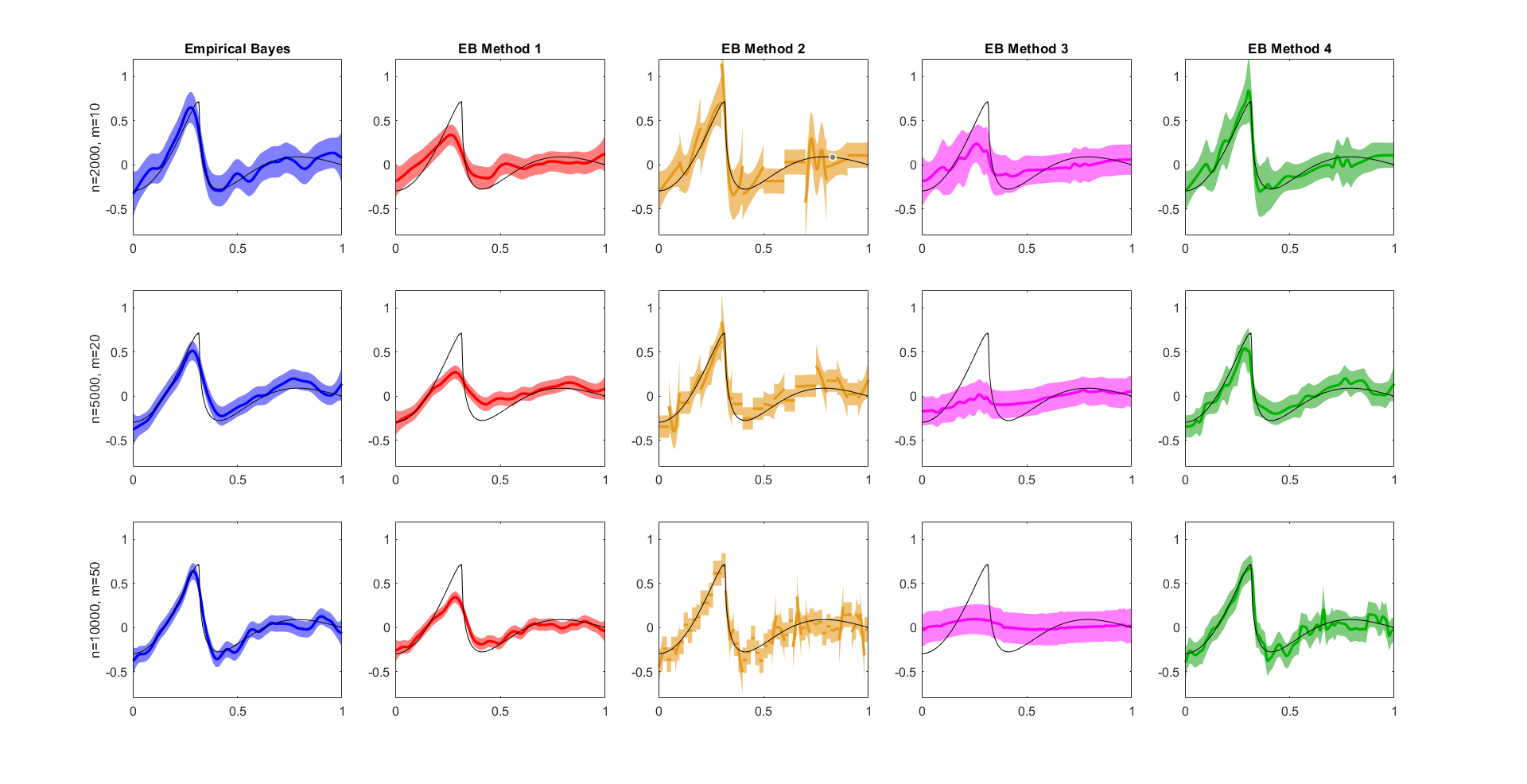}
	\caption{\scriptsize Empirical Bayes (MMLE) approach for the rescaled Mat\'ern process prior ($\alpha=3$). Benchmark and distributed GP posteriors. True function $f_0(x)=\sum_{j=4}^{\infty}1.5j^{-3/2}\sin(j)\psi_j(x)$  drawn in black. Posterior means drawn by solid lines, surrounded by $95\%$ point-wise credible sets, shaded between two dotted lines. The five columns correspond (left to right) to the non-distributed method, the distributed method with random partitioning, and the distributed methods with spatial partitioning without smoothing, with inverse variance weights and with exponential weights. From top to bottom the sample sizes are $n=2000,5000,10000$ and the number of experts is $m=10,20,50$.}
	\label{fig:Mat:EB}
\end{figure*}

\begin{table}[!h]
\begin{subtable}[c]{\textwidth}
\centering
\begin{tabular}{c|c|c|c}
(n,m)&$ (2000,10)$&  $(5000,20)$ & $(10000,50)$\\ \hline
 BM& 0.092 (0.013) & 0.067 (0.008) & 0.053 (0.006)\\
 M1& 0.136  (0.027) & 0.109 (0.026) & 0.118 (0.015)\\
 M2&  0.102 (0.018) & 0.083 (0.009) & 0.084 (0.009)\\
 M3& 0.184 (0.019) & 0.197 (0.011) & 0.205 (0.003)\\
 M4& 0.091 (0.017)  & 0.069 (0.009) & 0.057 (0.006)\\
\end{tabular}
	\caption{\scriptsize Average $L_2$-distance between $ f_0$ and posterior mean.}
\end{subtable}
\newline
\begin{subtable}[c]{\textwidth}
\centering
\begin{tabular}{c|c|c|c}
(n,m)&$ (2000,10)$&  $(5000,20)$ & $(10000,50)$\\ \hline
 BM& 0.160 (0.010) & 0.118 (0.007) & 0.093 (0.005)\\
 M1& 0.125 (0.020) & 0.090 (0.010) & 0.067 (0.006) \\
 M2& 0.182 (0.011) & 0.151 (0.006) &  0.155 (0.003)\\
 M3& 0.187 (0.008)  & 0.162 (0.002) &  0.176 (0.001))\\
 M4& 0.172 (0.010) & 0.143 (0.006) & 0.149 (0.002)\\
\end{tabular}
	\caption{\scriptsize Average radius of the  $L_2$-credible ball.}
	\label{table: radiusMatern_adapt}
\end{subtable}
	\caption{\scriptsize  Empirical (MMLE) Bayes rescaling of the Mat\'ern Gaussian process prior. BM: Benchmark, Non-distributed method. M1: Random partitioning, M2: Spatial partitioning, M3: Spatial partitioning with inverse variance weights, M4: Spatial partitioning with exponential weights.}
	\label{table: errorMatern_EB}
\end{table}

\begin{table}[!h]
\centering
\begin{tabular}{c|c|c|c}
(n,m)&$ (2000,10)$&  $(5000,20)$ & $(10000,50)$\\ \hline
 BM& 1.00 & 1.00 & 1.00\\
 M1& 0.49 & 0.25 & 0.00 \\
 M2& 0.98 & 1.00 & 1.00\\
 M3& 0.45   & 0.00 & 0.00\\
 M4& 0.96 & 1.00 & 1.00\\
\end{tabular}
	\caption{\scriptsize Empirical (MMLE) Bayes rescaling of the Mat\'ern Gaussian process prior. Proportion of experiments when the true function $f_0$ was inside in the $L_2$-credible ball.}
	\label{table: radiusMatern_EB}
\end{table}

\begin{table}[!h]
\centering
	\begin{tabular}{c|c|c|c}
(n,m)&$ (2000,10)$&  $(5000,20)$ & $(10000,50)$\\ \hline
Benchmark & 90.32s (17.51s) & 895.1s (86.5s) & 4767s (611s) \\
 Random & 6.31s  (1.95s) & 14.4s (3.3s) & 22.5s (4.0s)\\
Spatial & 6.36s (2.49s) & 13.8s (3.0s) & 21.2s (4.9s)\\
\end{tabular}
	
	\caption{\scriptsize Empirical (MMLE) Bayes rescaling of the Mat\'ern Gaussian process prior. Average run time for computing the posterior. Benchmark: Non-distributed method. Method 1: Random partitioning, Method 2: Spatial partitioning.}
\label{table: timeMatern_EB}
\end{table}

\begin{figure*}[!t]%
\centering
\includegraphics[width=\textwidth]{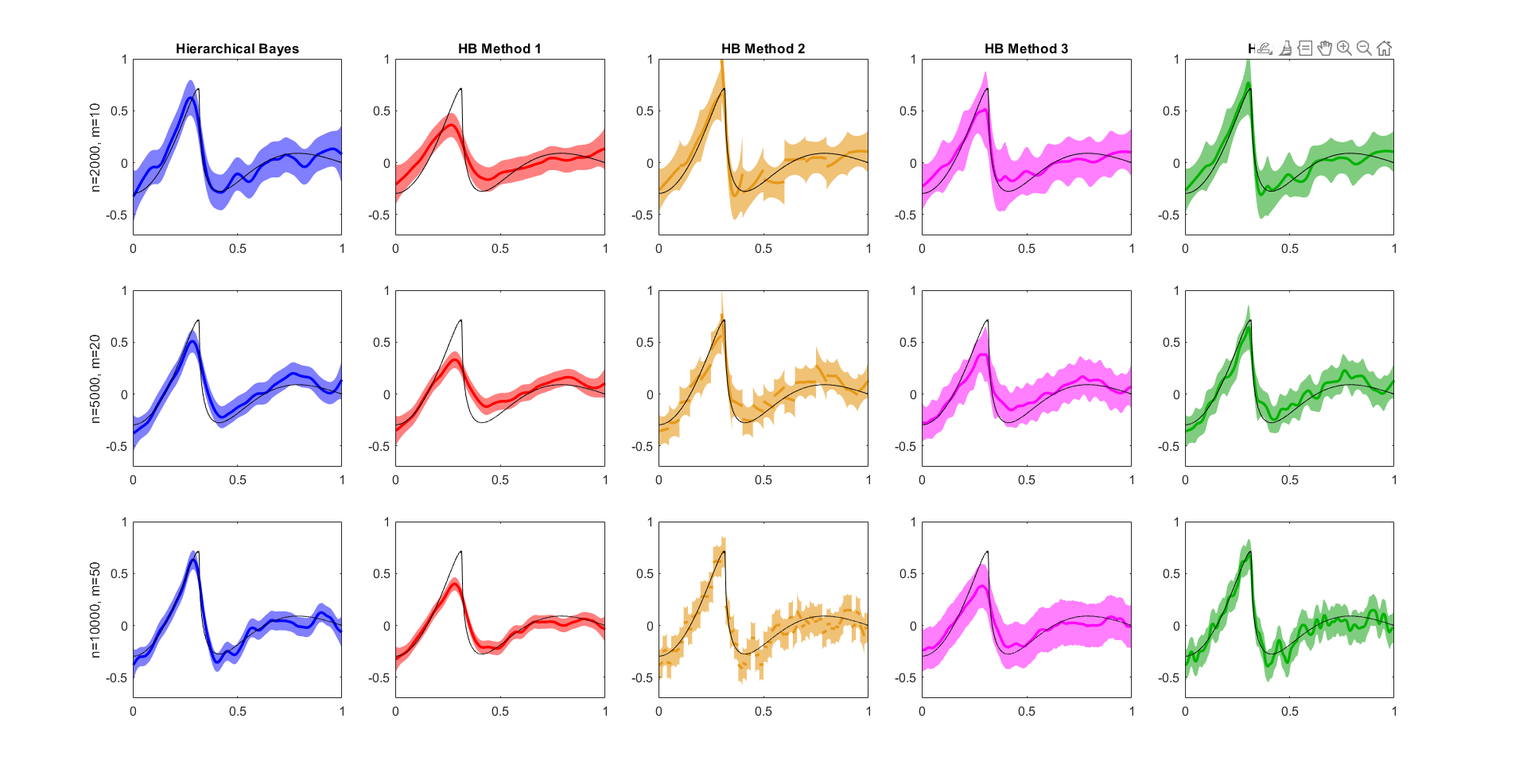}
	\caption{\scriptsize Hierarchical Bayes methods for the rescales Mat\'ern process prior ($\alpha=3$). Benchmark and distributed GP posteriors. True function $f_0(x)=\sum_{j=4}^{\infty}1.5j^{-3/2}\sin(j)\psi_j(x)$  drawn in black. Posterior means drawn by solid lines, surrounded by $95\%$ point-wise credible sets, shaded between two dotted lines. The five columns correspond (left to right) to the non-distributed method, the distributed method with random partitioning, and the distributed methods with spatial partitioning without smoothing, with inverse variance weights and with exponential weights. From top to bottom the sample sizes are $n=2000,5000,10000$ and the number of experts is $m=10,20,50$.}
	\label{fig:Mat:HB}
\end{figure*}

\begin{table}[!h]
\begin{subtable}[c]{\textwidth}
\centering
\begin{tabular}{c|c|c|c}
(n,m)&$ (2000,10)$&  $(5000,20)$ & $(10000,50)$\\ \hline
 BM& 0.093 (0.016) & 0.067 (0.008) & 0.056 (0.006)\\
 M1& 0.189  (0.019) & 0.095 (0.016) & 0.096 (0.007)\\
 M2&  0.120 (0.020) & 0.075 (0.009) & 0.074 (0.007)\\
 M3& 0.225 (0.001) & 0.087 (0.011) & 0.099 (0.005)\\
 M4& 0.100  (0.021)  & 0.060 (0.007) & 0.055 (0.007)\\
\end{tabular}
	\caption{\scriptsize Average $L_2$-distance between $ f_0$ and posterior mean.}
\end{subtable}
\newline
\begin{subtable}[c]{\textwidth}
\centering
\begin{tabular}{c|c|c|c}
(n,m)&$ (2000,10)$&  $(5000,20)$ & $(10000,50)$\\ \hline
 BM& 0.183 (0.025) & 0.117 (0.005) & 0.091 (0.004)\\
 M1& 0.122 (0.069) & 0.091 (0.006) & 0.066 (0.002) \\
 M2& 0.194 (0.031) & 0.147 (0.011) &  0.148 (0.001)\\
 M3& 0.100 (0.003)  & 0.184 (0.001) &  0.211 (0.001))\\
 M4& 0.157 (0.019) & 0.151 (0.001) & 0.152 (0.001)\\
\end{tabular}
	\caption{\scriptsize Average radius of the  $L_2$-credible ball}
	\label{table: radiusMatern_HB}
\end{subtable}
	\caption{\scriptsize Hierarchical Bayes rescaling of the  Mat\'ern Gaussian process prior. BM: Benchmark, Non-distributed method. M1: Random partitioning, M2: Spatial partitioning, M3: Spatial partitioning with inverse variance weights, M4: Spatial partitioning with exponential weights.}
	\label{table: errorMatern_HB}
\end{table}

\begin{table}[!h]
\centering
\begin{tabular}{c|c|c|c}
(n,m)&$ (2000,10)$&  $(5000,20)$ & $(10000,50)$\\ \hline
 BM& 1.00 & 1.00 & 1.00\\
 M1& 0.18 & 0.45 & 0.00 \\
 M2& 0.98 & 1.00 & 1.00\\
 M3& 0.00  & 1.00 & 1.00\\
 M4& 0.96 & 1.00 & 1.00\\
\end{tabular}
	\caption{\scriptsize Hierarchical Bayes rescaling of the  Mat\'ern Gaussian process prior. Proportion of experiments when the true function $f_0$ was inside in the $L_2$-credible ball.}
	\label{table: radiusMatern_HB}
\end{table}

\begin{table}[!h]
\centering
	\begin{tabular}{c|c|c|c}
(n,m)&$ (2000,10)$&  $(5000,20)$ & $(10000,50)$\\ \hline
Benchmark & 35.85s (6.48s) & 1292.2s (84.9s) & 10000s (1362s) \\
 Random & 4.97s  (1.32s) & 26.7s (0.8s) & 61.4s (12.8s)\\
Spatial & 5.25s (2.49s) & 25.8s (1.7s) & 58.6s (12.1s)\\
\end{tabular}
	
	\caption{\scriptsize Hierarchical Bayes rescaling of the Mat\'ern Gaussian process prior. Average run time for computing the posterior. Benchmark: Non-distributed method. Method 1: Random partitioning, Method 2: Spatial partitioning.}
\label{table: timeMatern_HB}
\end{table}

\subsection{Real world dataset: Superconductivity}\label{sec:realworld}

 Superconducting materials lose their resistance when they are cooled down below a certain temperature, called critical temperature, and as a consequence can conduct current with zero resistance. Materials with this property are used for instance in magnetic resonance imaging (MRI) and nuclear magnetic resonance (NMR) applications. Therefore, predicting the critical temperature is an important problem of wide interest. 

In our analysis we consider the Superconductivity data set \cite{superconductivty_data_464,Hamidieh2018ADS}. It contains 81 covariates describing the superconductor’s elemental properties and the goal is to predict the critical temperature based on them. In total  21263 measurement points were collected. In our analysis we have divided the data randomly into a training and testing data set consisting of 15000 and 6263 measurements, respectively. We have repeated the experiment ten times to measure the variability of the result. We compared the different distributed GP approaches (M1-M4) to the benchmark non-distributed approach (BM). In the spatially distributed methods we have split the data amongst the machines with respect to the wtd$\_$mean$\_$atomic$\_$mass variable. We have considered the squared exponential covariance kernel and selected the hyper-parameters with the minimize subroutine built in the gpml MATLAB package. In Method 4 we set the weight parameter $\rho=4$. The results are reported in Table \ref{table: superconductivity}. 

One can conclude that the naive (M2) and the exponentially re-weighted (M4) spatially distributed methods performed the best, similarly well to the benchmark non-distributed approach. At the same time, the product of experts method (M1) and the spatially distributed method with aggregation weights proportional to the inverse of the posterior variance (M3) performed sub-optimally, providing around $30\%$ bigger error. At the same time the distributed methods were around two magnitudes faster than the non-distributed counterpart, and their speed could further increase by parallelizing the computations instead of sequentially executing them, as in our analysis. Finally, we have also considered splitting the data in the spatially distributed methods with respect to other covariates as well. In view of Table \ref{table: superconductivity:spatial}, methods M2 and M4 are robust with respect to the splitting approach and provide similarly accurate predictions. The only requirement is that the feature used for splitting does not contain too many repetitions, which would result in imbalanced group sizes.

\begin{table}[!h]
\centering
\begin{tabular}{c|c|c|c|c|c}
Methods &BM&M1&M2&M3&M4\\
\hline\hline
RMSE & 12.63 (0.21)  & 15.58 (0.19) & 13.14 (0.35) &  16.63 (1.40) & 12.69 (0.32)\\
\hline
runtime & 18740s (1720s)  & 248s (82s) & 220s (23s) & 220s (23s) & 220s (23s)
\end{tabular}
\caption{\scriptsize{RMSE and runtime of non-distributed (BM) and distributed (M1-M4) GP regression with squared exponential covariance kernel for the  Superconductivity data set \cite{superconductivty_data_464}.} }\label{table: superconductivity}
\end{table}

\begin{table}[!h]
\centering
\begin{tabular}{c|c|c|c|c|c}
Methods &wtd\_range &mean &wtd\_mean&gmean&wtd\_entropy\\
\hline\hline
M2& 13.72 (0.51)  & 13.46 (0.35) &  13.14 (0.35) & 13.69 (0.64)&  13.49 (0.31) \\
\hline
M3& 19.25 (1.28) & 24.34 (0.88) & 16.63 (1.40) & 27.57 (1.50) & 28.16 (1.14) \\
\hline
M4& 13.27 (0.42)  & 13.14 (0.42) & 12.69 (0.32) & 13.10 (0.41s) & 12.91 (0.23)\\
\end{tabular}
\caption{\scriptsize{RMSE for spatially distributed GP regression methods with squared exponential covariance kernel using different feature variables (in all cases we omitted the "\_atomic\_mass" from their names) for splitting the data in the Superconductivity data set \cite{superconductivty_data_464}.} }\label{table: superconductivity:spatial}
\end{table}

\section{Discussion}\label{sec:discussion}
The paper provides the first theoretical guarantees for the method of spatial distribution applied to Gaussian processes. Our general results show that the resulting approximation to the posterior provides optimal recovery (both in the case of known and unknown regularity parameter) of the underlying functional parameter of interest in a range of models, including the nonparametric regression model with Gaussian errors and the logistic regression model. As specific examples of priors we considered the popular Mat\'ern process and integrated Brownian motion, but in principle other GP priors could be covered as well. The theoretical findings are complemented with a numerical analysis both on synthetic and real world data sets, where we also proposed a novel aggregation technique for aggregating the local posteriors together, which empirically outperformed the close competitors.

The main advantage of spatial distribution of the data is the ability to adapt the length scale of the prior in a data-driven way, which was highlighted by both theory and numerical illustration. The latter showed that the combination technique of the local posteriors is highly important and can substantially influence the performance of the method.  We also demonstrated that spatially distributed GPs can adapt to different local regularities of the true function, hence can potentially outperform the original GP.

Our results, although formulated for Gaussian processes, in principle rely on general Bayesian nonparametric techniques, adapted to the spatially distributed architecture and hence can be potentially extended to other classes of priors. Also, in the theoretical results univariate functional parameters were considered for simplicity, but the results could be extended to higher dimensional covariates. Another interesting extension is to derive theoretical guarantees for the proposed aggregation techniques beyond the ``glue together'' approach covered by this paper. These extensions, although of interest, are left for future work.

\clearpage

\appendix

\section{Comparison of Scalable Gaussian Process approximations}

There is a wide and growing literature on Gaussian Process approximations. However, these procedures until now had limited theoretical, frequentist underpinning and was unclear how they have to be tuned to achieve optimal statistical inference. Recent years have seen several new results on understanding the frequentist properties of these approaches, however, there are still many questions left unanswered. Here we collect a few recent results and compare with the guarantees derived in our paper.

One of the most popular and frequently used GP approximation technique is the inducing variable variational Bayes approach introduced by  \cite{titsias:2009}. Here, to compress the information present in the data, $m$ linear functionals $u_1,....,u_m,$ of the GP $f$ is considered, called the inducing variables. Then the variational class is defined by endowing the inducing variables $(u_1,...,u_m)$ with an $m$ dimensional Gaussian distribution $N_m(\mu,\Sigma)$, and integrating out the conditional distribution of $f|(u_1,...,u_m)$ with respect to this distribution. This results in a class of Gaussian Processes indexed by $\mu\in\mathbb{R}^{m}$ and $\Sigma\in\mathbb{R}^{m\times m}$ with mean and covariance functions of the form
\begin{align*}
x \mapsto K_{x\bs{u}}K_{\bs{uu}}^{-1}{\mu},\\
(x,z) \mapsto k(x,z)-K_{x\bs{u}}K_{\bs{uu}}^{-1}(K_{\bs{uu}}-\Sigma)K_{\bs{uu}}^{-1}K_{\bs{u}z},
\end{align*}
where $k(\cdot,\cdot)$ is the covariance kernel of the prior and the matrices are defined as $K_{x\bs{u}} = \cov_\Pi(f(x),\bs{u}) = K_{\bs{u}x}^T$ and $K_{\bs{uu}} = [\cov_\Pi(u_i,u_j)]_{1 \leq i,j \leq m}$. One can consider different choices of inducing variables. Examples include the inducing points method where $u_j=f(z_j)$ for some appropriately selected $z_j\in\{x_1,...,x_n\}$. After selecting the inducing points $f(z_1),...,f(z_m)$ the computational complexity is $O(nm^2)$. Various approaches were considered to select the inducing variables, including the $m$-Determinantal Point Processes method. However, sampling from this method can be computationally infeasible or at least very challenging \cite{belabbas2009spectral,kulesza2011k}, hence in practice often its approximations are used, see for instance \cite{anari2016monte} where an $\eps$ $m$-DPP point process method was introduced with computational complexity $O(nm^4\log(n)+nm^3\log(1/\eps))$, where $\eps$ denotes the approximation error. Another approach is to consider population spectral feature inducing variables (or eigenfunction inducing features) of the form $u_j=\int f\psi_j \lambda(x)dx$, where $\psi_j$ is the $j$th eigenfunction of the covariance kernel $k$ of the GP $f$ and $\lambda(x)$ is the density of the random design. This approach has computational complexity of $O(nm^2)$, but requires the knowledge of the eigenfunctions of the covariance kernel, which is often not available in an explicit form (e.g. the Mat\'ern kernel). A more practical version of this method is the empirical spectral features inducing variables method with $u_j=\boldsymbol{f}^T \hat{v}_j$, where $\boldsymbol{f}=\big(f(x_1),...,f(x_n)\big)^T\in\mathbb{R}^n$ denotes the GP evaluated at the design points and $ \hat{v}_j$ is the $j$th eigenvector of the empirical covariance matrix $\text{Cov}_{\Pi}(f(x_i),f(x_j))_{i,j}$. This approach does not require the knowledge of the eigenfunctions of the covariance kernel, however, involves the computation of the first $m$ eigenvectors of the covariance matrix. This in practice is done by using Lanczos iteration or conjugate gradient descent algorithms, see e.g. the IterGP method \cite{wenger2022posterior} from probabilistic numerics for a general approach and references therein.

In \cite{burt2019} it was shown that  the expected Kullback-Leibler divergence (with respect ot the prior) of the variational approximation and the posterior tends to zero if the number of inducing variables achieves a sufficient limit. Then, in \cite{nieman2022}, general frequentist contraction rates were derived for these inducing variables methods. In particular, it was shown that for $\beta$ H\"older smooth functions optimal contraction rates can be achieved by optimally rescaled Gaussian processes with Mat\'ern or squared exponential kernels with the population and empirical spectral features variational methods if the number of inducing variables exceeds $m\geq n^{d/(2\beta+d)}$. In \cite{nieman2023} it was shown that the preceding bound on the inducing variables, in context of the population spectral features method, is also a necessary condition. Furthermore, under this assumption, also frequentist coverage guarantees were derived. For the Lanczos and conjugate gradient approaches minimax posterior contraction rates were derived in \cite{stankewitz2024} if the number of iterations exceed $m\geq n^{d/(2\beta+d)}\log n$ for estimating $\beta$-smooth functions with optimally rescaled GP priors. In summary, if one has access to the eigenfunctions of the covariance kernel, then the computational complexity for optimally estimating $\beta$-smooth functions with rescaled GP priors is $O(n^{(2\beta+3d)/(2\beta+d)})$, but in case the eigenfunctions are not explicitly available, then the computational complexity exceeds $O(n^{(4\beta+3d)/(2\beta+2d)})$ for the empirical version of the algorithm and $O(n^{(2\beta+4d)/(2\beta+d)}\log n)$ for the approximate $m$-DPP process. In contrast the spatially distributed GP approach with $m=n^{d/(2\beta+d)}$ machines has computational complexity $O(n^{6\beta/(2\beta+d)})$. Therefore, one can observe that for higher regularities the inducing variable method, while for lower regularities the distributed approach has lower computational complexity. We also note that currently there are no frequentist guarantees available for inducing variable variational Bayes methods in the adaptive setting where the regularity parameter $\beta$ is not available.

Another popular and frequently used approach is the Vecchia approximation for GPs, see e.g. \cite{datta2016hierarchical,katzfuss:guiness:2021,peruzzi2022}. The underlying idea of the approximation is to introduce sparsity in the spatial dependency structure that is characterized by a directed acyclic graph (DAG). Therefore, this approach is not directly approximating the posterior, but considers a computationally more efficient version of the GP prior distribution. Let us denote the size of the parent sets, i.e. the number of points influencing the conditional distribution of the Gaussian process at a given point, by $m$. Then the computational complexity of the Vecchia posterior is $O(n m^3)$. In \cite{szabo:zhu:2024} it was shown that both the deterministic (oracle) and hierarchical Bayes rescaling of the GP prior with Mat\'ern covariance kernel results in optimal contraction rate for well chosen parent sets with constant size $m$ (depending only on $d$, $\alpha$ and $\beta$ only). Hence Vecchia GPs with linear computational complexity can achieve minimax contraction rates. However, it is worth emphasizing again, that in this case a different, spatially sparse prior is used in the procedure. 

There are many other GP approximation methods proposed in the literature, including for instance techniques based on covariance tapering \cite{furrer2006covariance,kaufman2008covariance,stein2013statistical} or compositional likelihoods \cite{bai2012joint,bevilacqua2015comparing}. However, these approaches typically do not have frequentist underpinning and hence it is unclear how they should be tuned  to achieve rate optimal inference on the functional parameter of interest and what is the corresponding computational complexity.

\section{Proofs of the general results}\label{sec:proof:general}
In this section we collect the proofs of the general contraction rate theorems for the non-adaptive and adaptive frameworks with known and unknown regularity parameter, respectively.

\subsection{Proof of Theorem~\ref{thm: nonadaptive}}\label{sec:proof:nonadapt}
The $k$th local problem is a non-i.i.d.\ regression problem of the type considered in
\cite{ghoshal:vaart:2007}, with a Gaussian process prior as considered in \cite{vaart:zanten:2008}, but
with the number of observations reduced to $n/m$. These papers give a rate of contraction of the
local posterior distribution $\Pi^{(k)}( \cdot|\textbf{Y}^{(k)})$
relative to the root of the average square Hellinger distance defined as 
$$d_n^{(k)}(f,g)^2=\frac{m}{n} \sum_{x_i\in \mathcal{D}^{(k)}}\int (\sqrt{p_{f,i}}-\sqrt{p_{g,i}})^2 d\mu_i.$$
Under Assumption~\ref{ass:metric} this square semimetric is bounded above by the square of the local
uniform metric $\|\cdot\|_{\infty,k}$ and so are the average local Kullback-Leibler divergence and variation.
Therefore Theorem~4 of \cite{ghoshal:vaart:2007} shows that the solution $\eps_n$ to the inequality
$\phi_{ f_0}^{(k)}(\eps_n)\leq (n/m)\eps_n^2$ gives a rate of contraction relative to $d_n^{(k)}$.
(Theorem~3.3 in \cite{vaart:zanten:2008} shows that in the Gaussian regression problem the  same
is true with the local root average square Hellinger distance replaced by the local empirical $L_2$-distance.)
Inspection of the proof (or see (b) of Theorems~8.19-8.20 in \cite{ghosal2017fundamentals}) shows that this
rate statement can be understood in the sense that 
\begin{align}
\E_0 \Pi^{(k)}\left( f:\, d_{n}^{(k)}(f, f_0)^2\geq M_n \eps_n^2|\textbf{Y}^{(k)}\right)\lesssim \frac{1}{(n/m)\eps_n^2},
  \label{EqLocalRate}
\end{align}
thus including the assertion that the left side tends to zero at the speed $m/(n\eps_n^2)$. 
Then, because $d_n^2=m^{-1}\sum_{k=1}^m(d_n^{(k)})^2$,
\begin{align*}
&\E_0\Pi_{n,m}\left( f:\, d_{n}(f, f_0)^2\geq M_n \eps_n^2|\textbf{Y}\right)\\
%&= \E_0\Pi_{n,m}\Big(f:m^{-1}\sum_{j=1}^m\|f-f_0\|_{n,j}\geq M_n \eps_n|\textbf{X},\textbf{Y}\Big)\\
&\quad\leq  \E_0\Pi_{n,m}\left( f:\,\exists k \ d_{n}^{(k)}(f, f_0)^2 \geq M_n \eps_n^2|\textbf{Y}\right)\\
&\quad\leq \sum_{k=1}^m \E_0\Pi^{(k)} \left(f:\,d_{n}^{(k)}(f, f_0)^2 \geq M_n \eps_n^2|\textbf{Y}^{(k)}\right)\lesssim \frac{m^2}{n\eps_n^2},
\end{align*}
which tends to zero by the assumption that $n\eps_n^2/m^2\rightarrow \infty$. This concludes the proof of the first statement of the theorem.

Theorems~8.19-8.20 in \cite{ghosal2017fundamentals} show that the left side of  \eqref{EqLocalRate} is bounded by the probability of failure of the evidence lower bound (the left side of Lemma~8.21) plus terms of  the order $e^{-D(n/m)\e_n^2}$,  for some positive constant $D$. Under Assumption~\ref{ass:metric} the former probability gives the order on the right side of \eqref{EqLocalRate}. We show below in Lemma~\ref{lem: LB:denom} that in the nonparametric and logistic regression models the probability of failure of the evidence lower bound is actually also of the smaller exponential order, whence the right side of \eqref{EqLocalRate} can be replaced by $e^{-D(n/m)\e_n^2}$. The right side of the final display in the preceding proof then improves to $m e^{-D(n/m)\e_n^2}$, which tends to zero if $m\le n\e_n^2/(D'\log n)$ for a sufficiently large
constant $D'$. This proves the second statement of the theorem.

\subsection{Proof of Theorem~\ref{thm: adaptation}}\label{sec:proof:adaptation}
Under Assumption~\ref{ass:metric} the average square local Hellinger distance $d_n^{(k)}$ and the local average
Kullback-Leibler divergence and variation are bounded above by the square local uniform norm.  
Therefore conditions \eqref{eq: rem-mass}-\eqref{eq: entropy} imply the conditions of Theorem~4
of \cite{ghoshal:vaart:2007} applied to the $k$th local problem, with $n/m$ observations and prior
$\Pi^{g,(k)}$. Thus this theorem gives a contraction rate $\eps_n$ of the $k$th local posterior distribution relative
to $d_n^{(k)}$. In view of (b) of Theorems~8.19-8.20 in \cite{ghosal2017fundamentals}, 
this rate can be understood in the sense of \eqref{EqLocalRate}, with $\Pi^{g,(k)}$ taking the place of $\Pi^{(k)}$,
and improved to exponential order in the case of the Gaussian and logistic regression models.
The proof can be finished as the proof of Theorem~\ref{thm: nonadaptive}.

\section{Proofs for the examples}\label{sec:proof:examples}
In this section we collect the proofs of the corollaries for the integrated Brownian motion
and Mat\'ern prior processes.

\subsection{Proof of Corollary~\ref{cor:IBM:nonadaptresc}}\label{sec:IBM:nonadaptresc}
Assumption~\ref{ass:metric} holds both in the nonparametric and logistic regression models, 
and for $\eps_n=n^{-\beta/(2\beta+1)}$ we have $n\e_n^2=n^{1/(2\beta+1)}$ so that $m=o(n\eps_n^2/\log n)$ by assumption. Therefore the corollary is a consequence of Theorem~\ref{thm: nonadaptive} provided the remaining condition
of this theorem, the modulus inequalities $\phi_{ f_0}^{(k)}(\eps_n)\leq (n/m)\eps_n^2$, is satisfied.

Let $(G^{\tau,(k)}_t: t\in I^{(k)})$ be rescaled Integrated Brownian motion \eqref{eq: intBMresc} on the interval $I^{(k)}$ and let $\mathbb{H}^{\tau,(k)}$ be its RKHS.  
We prove below that, for $f_0\in C^{\beta}([0,1])$ and $\beta\leq \ell+1/2$, if $m\le \tau/\eps^{1/(\ell+1/2)} $ and $B_n\ge \eps$ with
\begin{align}
%\exp&\Bigl(\frac{\tau}{m\eps^{1/(\ell+1/2)}}\Bigr)\geq B_n^2\nonumber\\
&\frac{B_n^2}m\gtrsim  \eps^{2(1+(\ell-\beta)\wedge 0)/\beta}(\tau\vee\tau^{2\ell+1}),\label{eq:cond:B}
\end{align}
then the following bounds are satisfied up to constants that do not depend on $\eps$, $\tau$, $m$ and $n$
\begin{align}
\label{eq: resc-inf-norm}
\inf_{h:\| f_0-h\|_{\infty,k}\leq\eps}\|h\|^2_{\mathbb{H}^{\tau,(k)}}&\lesssim \frac{\tau^{-2\ell-1}}{m\eps^{\frac{2\ell+2-2\beta}{\beta}}},\\
\label{eq: resc-log-prior}
-\log \Pr\bigl( \| G^{\tau,(k)}\|_{\infty,k}\!<\!\eps\bigr)&\lesssim\frac{\tau}{m\eps^{\frac{1}{\ell+1/2}}}+\log\!\frac{B_n(\tau^{\ell}\!\vee\! 1)}{\eps^2}.
\end{align}
(More precisely, \eqref{eq: resc-inf-norm} needs \eqref{eq:cond:B}, but \eqref{eq: resc-log-prior} is valid without it.)
Next we choose $\tau=\tau_n\asymp n^{\frac{\ell+1/2-\beta}{(\ell+1/2)(2\beta+1)}}$ and  $\eps=\eps_n= n^{-\beta/(2\beta+1)}$,
so that condition \eqref{eq:cond:B} is satisfied if $ B_n^2\geq m n^{\frac{-1+2(\ell-\beta)\vee 0}{1+2\beta}}$, which is true
by assumption. Furthermore, 
with these choices $\tau_n/\eps_n^{1/(\ell+1/2)}= n^{1/(1+2\beta)}\ge m$, by assumption,
and $(\tau_n^\ell\vee 1)/\eps_n^2$ is a power of $n$, so that the second term on the right side of \eqref{eq: resc-log-prior} is bounded by a multiple of the 
first if  $(\log B_n\vee 0)+\log n\le n^{1/(1+2\beta)}/m$, which is also true by assumption. Combining \eqref{eq: resc-inf-norm} and \eqref{eq: resc-log-prior}, we then get
\begin{align*}
 \phi_{ f_0}^{(k)}(\eps_n)\lesssim \frac{\tau_n^{-2\ell-1}}{m\eps_n^{(2\ell+2-2\beta)/\beta}}+\frac{\tau_n}{m\eps_n^{1/(\ell+1/2)}}\lesssim
\frac {n\eps_n^2}m.
\end{align*}

\paragraph{Proof of \eqref{eq: resc-inf-norm}}
By Whitney's theorem, a function $f_0\in C^{\beta}([0,1])$ can be extended to a function $f_0$ on the full line $\mathbb{R}$ of the same H\"older norm and with compact support. Let $\psi$ be a smooth $\ell$th order kernel (a function with $\int\psi(s)\,ds=1$, $\int s^l\psi(s)\,ds=0$ for all $l\leq\ell$, and $\int|s|^{\ell+1}|\psi(s)|\,ds<\infty$), 
and let $f_0\ast\psi_{\sigma}$ be the convolution between $ f_0$ and the scaled version 
$\psi_{\sigma}(\cdot):=\sigma^{-1}\psi(\cdot/\sigma)$ of $\psi$, for $\sigma>0$. Then, for $l\in\mathbb{N}$,
\begin{align}
\sup_{0\leq t\leq 1}| f_0- f_0\ast\psi_{\sigma}|(t)&\lesssim\sigma^{\beta},\label{EqConvDistance}\\
\sup_{0\leq t\leq 1}|( f_0\ast\psi_{\sigma})^{(l)}(t)|&\lesssim\Bigl(\frac1\sigma\Bigr)^{(l-\beta)\vee 0}.\label{eq: UB:supnorm:approx}
\end{align}
For proofs, see the proof of Lemma~11.31 in \cite{ghosal2017fundamentals}, and at the end of this section. 

The local RKHS $\mathbb{H}^{\tau,(k)}$ of $G^{\tau,(k)}$ is characterized in 
Lemma~\ref{lem: def-rescRKHS} and~\ref{lem: def-locRKHS} below. Set $I^{(k)}=\left(\frac{k-1}{m},\frac{k}{m}\right]$
and consider the function $h$ defined by
\begin{align}
h(t):=I^{\ell+1}\left(( f_0\ast\psi_{\sigma})^{(\ell+1)}\textbf{1}_{I^{(k)}}\right)(t)+\sum_{r=0}^{\ell}\frac{(t-\frac{k-1}{m})^r}{r!}( f_0\ast\psi_{\sigma})^{(r)}\left(\frac{k-1}{m}\right).\label{def:rkhs:approx}
\end{align}
The function $I^{\ell+1}\bigl(( f_0\ast\psi_{\sigma})^{(\ell+1)}\textbf{1}_{I^{(k)}}\bigr)$ has $(\ell+1)th$ derivative equal to
$( f_0\ast\psi_{\sigma})^{(\ell+1)}\textbf{1}_{I^{(k)}}$ and derivatives of orders $0,1,\ldots,\ell$ at $(k-1)/m$ equal to 0,
while the polynomial part of $h$ is set up to have vanishing $(\ell+1)$th derivative and derivatives of orders $0,1,\ldots,\ell$ at $(k-1)/m$ equal to the derivatives of $f_0 \ast \psi_\sigma$ at this point. If follows that $h= f_0\ast\psi_{\sigma}$ on $I^{(k)}$, whence
$\|h- f_0\|_{\infty,k}\leq\sigma^{\beta}$ by \eqref{EqConvDistance}. Moreover, in view of Lemmas~\ref{lem: def-rescRKHS} and~\ref{lem: def-locRKHS}, 
the local RKHS norm of $h$ satisfies
$$\|h\|^2_{\mathbb{H}^{\tau,(k)}}\le \sum_{j=0}^{\ell}\frac{h^{(j)}(0)^2}{B_n^2\tau^{2j}}+\frac1{\tau^{2\ell+1}}\int_{\frac{k-1}{m}}^{\frac{k}{m}}( f_0\ast\psi_{\sigma})^{(\ell+1)}(t)^2\,dt.$$
In view of \eqref{eq: UB:supnorm:approx} the second term in the right side is bounded from above by $\tau^{-2\ell-1}\sigma^{-2(\ell+1-\beta)}/m$. Because the first, integral part of $h$ vanishes at 0, only the polynomial part contributes to the first term of the RKHS norm.
Without the factor $1/B_n^2$ this is equal to 
\begin{align*}
&\sum_{j=0}^{\ell}\frac{1}{\tau^{2j}}\biggl(\sum_{r=j}^{\ell}\Bigl(-\frac{k-1}{m}\Bigr)^{r-j}\frac1{(r-j)!}( f_0\ast\psi_{\sigma})^{(r)}\Big(\frac{k-1}{m}\Big)\biggr)^2\\
&\quad\leq\sum_{j=0}^{\ell}\frac1{\tau^{2j}}\Bigl(\sum_{r=j}^{\ell}\frac1{\sigma^{(r-\beta)\vee 0}}\Bigr)^2 
\lesssim \Bigl(\frac1\sigma\Bigr)^{(2\ell-2\beta)\vee 0}(\tau^{-2\ell}\vee 1).
\end{align*}
Taking these together and choosing $\sigma=\eps^{1/\beta}$, we see that
\begin{align*}
\inf_{h:\| f_0-h\|_{\infty,k}\leq\eps}\|h\|^2_{\mathbb{H}^{\tau,(k)}}
  &\lesssim \frac{\tau^{-2\ell}\vee 1}{B_n^2\eps^{\frac{2\ell-2\beta}{\beta}\vee0}}
    +\frac{\tau^{-2\ell-1}}{m\eps^{\frac{2\ell+2-2\beta}{\beta}}}.
\end{align*}
Under assumption \eqref{eq:cond:B}, the right side of this display is dominated by its second term.
This concludes the proof of assertion \eqref{eq: resc-inf-norm}.

\paragraph{Proof of \eqref{eq: resc-log-prior}}
By the independence of the two components of the process $G^{\tau,(k)}$, defined in \eqref{eq: intBMresc},
\begin{align*}
\Pr\bigl(\| G^{\tau,(k)}\|_{\infty,k}<\eps\bigr)
 \geq \Pr\Bigl(\|I^{\ell}W_{\tau t}\|_{\infty,k}<\frac\eps2\Bigr)\Pr\Big(\Bigl\|B_n\sum_{j=0}^{\ell}\frac{Z_j(\tau t)^j}{j!}\Bigr\|_{\infty,k}<\frac\eps2\Big).
\end{align*}
In view of  Lemma~\ref{lem: IBM-int} (with $x_0=\tau(k-1)/m$ and $x_1=\tau k/m$), the first term on the right,
the small ball probability of the scaled $\ell$-fold integrated Brownian motion on $I^{(k)}$, is bounded from below by a multiple of 
$$\exp\Big\{-q\frac{\tau/m }{\eps^{1/(\ell+1/2)}}-\log \frac 2\eps\Big\},$$
for some $q>0$,  provided that $(\tau/m)\eps^{-1/(\ell+1/2)}\ge 1$.
On the other hand, by the independence of the random variables $Z_j$, the second term is bounded below by
\begin{align*}
\Pr\Bigl(\max_{0\le j\le \ell}|Z_j|<\frac{\eps}{2B_ne(\tau^{\ell}\lor 1)}\Bigr)
=\Big(2\Phi\Bigl(\frac{\eps}{2B_ne(\tau^{\ell}\lor 1)}\Bigr)-1\Big)^{\ell+1}
\gtrsim\Big(\frac{\eps}{B_ne(\tau^{\ell}\lor 1)}\Big)^{\ell+1},
\end{align*}
provided $4B_ne>\eps$, since $2\Phi(x)-1\ge xe^{-1}$, for $0<x<2$. Combining the last two displays,
we find \eqref{eq: resc-log-prior}.

\paragraph{Proof of \eqref{eq: UB:supnorm:approx}}
The derivative of the convolution $f\ast g$ is given as $\left(f\ast g\right)'=f'\ast g=f\ast g'$, which can be iterated to obtain derivatives of higher degree.
In the case that  $\beta> l$ we use this to see that $(f_0\ast\psi_{\sigma})^{(l)}= f_0^{(l)}\ast\psi_{\sigma}$, which is uniformly bounded by $\|f_0^{(l)}\|_\infty$.
In the case that $\beta\le l$, we write $( f_0\ast\psi_{\sigma})^{(l)}= f_0^{(b)}\ast\psi^{(l-b)}_{\sigma}$, for $b$ the largest integer strictly
smaller than equal $\beta$.  
Since $\psi$ is smooth and integrates to 1, we have $\int \psi^{(l)}(s)\,ds=0$, for $l=1,2,\ldots$. Therefore
$$(f_0\ast\psi_\sigma)^{(l)}(t)%=\int f_0(t-\sigma y)\psi^{(l)}(s)\,ds\frac1{\sigma^{l+1}}
=\int \bigl(f_0^{(b)}(t-\sigma s)-f_0^{(b)}(t)\bigr)\psi^{(l-b)}(s)\,ds\frac1{\sigma^{l-b}}.$$
Since $f_0^{(b)}\in C^{\beta-b}$, this is bounded in absolute value by 
$\int |\sigma s|^{\beta-b}|\psi^{(l-b)}(s)|\,ds\,\sigma^{-l+b}\lesssim \sigma^{\beta-l}$.

\subsection{Proof of Corollary~\ref{cor:adaptive:IBM}}\label{sec:proof:adaptive:IBM}
In view of Theorem~\ref{thm: adaptation} it is sufficient to construct a sieve $B_{n,m}^{(k)}$ such that assumptions \eqref{eq: rem-mass}, \eqref{eq: sb-prior} and \eqref{eq: entropy} hold. This may be done analogously to the construction in \cite{vandervaart2009} for the squared exponential Gaussian process.

For $\mathbb{H}_1^{\tau,(k)}$ the unit ball in the RKHS of the local rescaled integrated Brownian motion $(G^{\tau,(k)}_t: t\in I^{(k)})$
with fixed scale $\tau$,  and  $\mathbb{B}^{(k)}_1$ the unit ball in the Banach space of continuous functions on $I^{(k)}$ 
equipped with the supremum norm $\|\cdot\|_{\infty,k}$, we consider sieves of the form
\begin{align}
B_{n,m}^{(k)}=\bigl\{  f1_{I^{(k)}} : f\in \bigcup_{q_n<\tau< r_n} K_{n,m}\mathbb{H}_1^{\tau,(k)}+\eps_n\mathbb{B}^{(k)}_1\bigr\}.
\label{EqDefinitionBnm}
\end{align}
In the present proof we set  $\eps_n=Cn^{-\beta/(2\beta+1)}$, $q_n=0$, $r_n=c n^{\frac{\ell+1/2-\beta}{(\ell+1/2)(2\beta+1)}}$, 
and $K^2_{n,m}:=K (n/m)\eps_n^2$, for large constants $C$, $c$ and $K$ to be determined.

Write $G^{A,(k)}$ for the rescaled integrated Brownian motion process with random scale $\tau$.
It will be shown below that 
% when $\eps_n\lesssim n^{-1/4}$ and
%$$m=o\big(n^{\frac{1}{1+2\beta}}\land n^{\frac{\beta}{(\ell+1/2)(1+2\beta)}}\big),$$
%which is true under the assumption $m=o(n^{\frac{1}{4\ell+2}})$ since $1/2<\beta \le \ell+1/2$, 
the following bounds are satisfied
\begin{align}
\label{eq: log-rem-mass:IBM}
\log\Pr\left(G^{A,(k)}\notin B^{(k)}_{n,m}\right)&\leq -4\frac nm\eps_n^2,\\
\label{eq: log-sb-prior:IBM}
-\log \Pr\left(\|G^{A,(k)}- f_0\|_{\infty,k}\leq\eps_n\right)&\leq \frac nm\eps_n^2,\\
\label{eq: log-entropy:IBM}
\log N\left(\eps_n,B^{(k)}_{n,m},\|\cdot \|_{\infty,k}\right)&\lesssim\frac nm\eps_n^2.
\end{align}
This verifies \eqref{eq: rem-mass}--\eqref{eq: entropy} and completes the proof of the corollary.

\paragraph{Proof of \eqref{eq: log-rem-mass:IBM}}
 The local remaining masses satisfy
\begin{align*}
\Pr&(G^{A,(k)}\notin B_{n,m}^{(k)})\\
&\leq \Pr(\tau>r_n)+\int_0^{r_n}\Pr(G^{\tau,(k)}\notin B_{n,m}^{(k)})g(\tau)\,d\tau.
\end{align*}
By assumption \eqref{def:hyperprior}, the first term is bounded above by a multiple of 
$$\exp\bigl(-D_1 n^{1/(2\ell+2)}r_n^{(\ell+1/2)/(\ell+1)}/(2m)\bigr)\le \exp(-5(n/m)\eps_n^2),$$ if the constant 
$c$ in $r_n$ is chosen small enough relative to $C$ in $\eps_n$ and $D_1$. It suffices
to bound the second term. 

Let $\phi_0^{\tau,(k)}(\eps) =-\log\Pr\bigl(\| G^{\tau,(k)}\|_{\infty,k}<\eps\bigr)$ be the centered small ball exponent 
of the rescaled process $G^{\tau,(k)}$. Then
\begin{align}
\Pr(G^{\tau,(k)}\notin B_{n,m}^{(k)})&\leq\Pr(G^{\tau,(k)}\notin K_{n,m}\mathbb{H}_1^{\tau,(k)}+\eps_n\mathbb{B}^{(k)}_1)\nonumber\\
&\leq 1 -\Phi(\Phi^{-1}(e^{-\phi_0^{\tau,(k)}(\eps_n)})+K_{n,m}),\label{eq:Borel:IBM}
\end{align}
in view of Borell's inequality (see \cite{borell1975brunn} or Theorem~5 in \cite{vaart:zanten:2008:reproducing}).
For $\tau\le r_n$ and $\tau/m\ge \eps_n^{1/(\ell+1/2)}$,  relation \eqref{eq: resc-log-prior} gives that
$\phi_0^{\tau,(k)}(\eps_n)\lesssim r_n\eps_n^{-1/(\ell+1/2)}/m+(0\vee \log B_n)+\log n$, which is bounded above
by $K n\eps_n^2/(8m)=K_{n,m}^2/8$, for sufficiently large $K$.
For $\tau\le r_n$ and $\tau/m\le \eps_n^{1/(\ell+1/2)}$,  relation \eqref{eq: resc-log-prior} applied with 
$\eps=(\tau/m)^{\ell+1/2}\le \eps_n$, gives that
$\phi_0^{\tau,(k)}(\eps_n)\le \phi_0^{\tau,(k)}(\eps)\lesssim 1+(0\vee \log B_n)+\log n\ll K_{n,m}^2/8$.
Therefore, Lemma~\ref{lem: norm-cdf} gives that $\Phi^{-1}(e^{-\phi_0^{\tau,(k)}(\eps_n)})\ge -K_{n,m}/2$, and consequently
the preceding display is bounded above by 
$1-\Phi(K_{n,m}/2)\leq e^{-K_{n,m}^2/8}$,  by  Lemma~\ref{lem: norm-cdf-tail}. This implies \eqref{eq: log-rem-mass:IBM}, 
for $K\ge 32$.

\paragraph{Proof of \eqref{eq: log-sb-prior:IBM}}
For $\tau\in [r_n,2r_n]$, we have that $\tau/\eps_n^{1/(\ell+1/2)}\asymp n\e_n^2$ and
hence $\tau/m\ge \eps_n^{1/(\ell+1/2)}$, by the assumptions on $m$, and 
condition \eqref{eq:cond:B} is satisfied at $\eps=\eps_n/2$, by the assumptions on $B_n$. 
Therefore, assertions  \eqref{eq: resc-inf-norm} and  \eqref{eq: resc-log-prior} give
\begin{align*}
\phi^{\tau,(k)}_{ f_0}(\eps_n/2)&\lesssim \frac{\tau^{-2\ell-1}}{m\eps_n^{\frac{2\ell+2-2\beta}{\beta}}}+\frac{\tau}{m\eps_n^{\frac{1}{\ell+1/2}}}
+\log \frac{B_n(\tau^\ell\vee 1)}{\eps_n^2}.
\end{align*}
%This is still true with $\tau$ on the right replaced by $r_n$, with a different multiplicative constant.
The first two terms on the right are both of the order $n\eps_n^2/m$, where the multiplicative constant can be adjusted to be
arbitrarily small by choosing the constant $C$ in $\eps_n$ large, for a given constant $c$ in $r_n$. 
The third term is bounded above by a multiple of $(0\vee \log B_n)+ \log n\le n^{1/(2\beta+1)}/m+\log n$, by assumption, and hence is also bounded by 
an arbitrarily small constant times $n\eps_n^2/m$ if the constant $C$ in $\eps_n$ is sufficiently large.  
Thus $C$ can be adjusted so that  right side  and hence  $\phi^{\tau,(k)}_{ f_0}(\eps_n/2)$
is bounded above by  $n\eps_n^2/(2m)$. Then, in view of  Lemma~5.3 in \cite{vaart:zanten:2008:reproducing},
\begin{align*}
\Pr\big(&\|G^{A,(k)}- f_0\|_{\infty,k}\leq\eps_n\big) \geq\int_{0}^{\infty}e^{-\phi^{\tau,(k)}_{ f_0}(\eps_n/2)}g(\tau)\,d\tau\\
&\geq\int_{r_n}^{2r_n}e^{-n\eps_n^2/(2m)}g(\tau)\,d\tau.
\end{align*}
In view of \eqref{def:hyperprior}, minus the logarithm of this is bounded above by 
$$ -\log r_n+\frac{n\eps_n^2}{2m}+D_2\frac{n^{1/(2\ell+2)} (2r_n)^{(\ell+1/2)/(\ell+1)}}{m}.$$
The first term is of order $\log n$ and is much smaller than the second by the assumptions on $m$.
The third term is of order $n\eps_n^2/m$, where the multiplicative constant can be adjusted to be arbitrarily small by choosing $C$ in $\eps_n$ sufficiently large. Thus the whole expression is bounded above by $n\eps_n^2/m$.

\paragraph{Proof of \eqref{eq: log-entropy:IBM}}
By the combination of Lemmas~\ref{lem: def-rescRKHS} and~\ref{lem: def-locRKHS}, the RKHS $\mathbb{H}^{\tau,(k)}$ is
the set of functions $h: I^{(k)}\to\RR$ that can be extended to a function in the Sobolev space $H^{\ell+1}([0,1])$, equipped with the
norm with square 
$$\|h\|_{\mathbb{H}^{\tau,(k)}}^2=\inf_{\substack{g\in H^{\ell+1}([0,1])\\g1_{I^{(k)}}=h}}\sum_{j=0}^\ell\frac{g^{(j)}(0)^2}{B^2\tau^{2j}}+\frac1{\tau^{2\ell+1}}\int_0^1g^{(\ell+1)}(s)^2\,ds.$$
Because the norm $\tau\mapsto \|h\|_{\mathbb{H}^{\tau,(k)}}$ is decreasing in $\tau$, the unit balls satisfy $\mathbb{H}^{\tau,(k)}_1\subset \mathbb{H}^{r_n,(k)}_1$ 
for $\tau\le r_n$. It follows that
\begin{align*}
&N\Bigl(2\eps_n,B_{n,m}^{(k)},\|\cdot\|_{\infty,k}\Bigr)
\leq N\left(\eps_n/K_{n,m}, \mathbb{H}^{r_n,(k)}_1,\|\cdot\|_{\infty,k}\right).
\end{align*}
Next we bound the covering number on the right using the estimate \eqref{eq: resc-log-prior} on the small
ball probability and the duality between these quantities, proved by \cite{KuelbsLi} (see Lemma~I.29(ii) in \cite{ghosal2017fundamentals}).
By \eqref{eq: resc-log-prior} the small ball exponent satisfies $\phi_0^{r_n,(k)}(\eps_n)\lesssim (r_n/m)\eps_n^{-1/(\ell+1/2)}$.
Because $\Pr\bigl(\|G^{\tau,(k)}\|_{k,\infty}<\eps\bigr)\le \Pr\bigl(\|I^lW\|_{k,\infty}<\eps\bigr)$, by Anderson's lemma,
Lemma~\ref{lem: IBM-int} gives that also $\phi_0^{r_n,(k)}(\eps_n)\gtrsim (r_n/m)\eps_n^{-1/(\ell+1/2)}$.
Thus Lemma~I.29(ii) gives
\begin{align*}
&\log N\Bigl(\frac{\eps_n}{\sqrt{D_2\frac{r_n}m\eps_n^{-1/(\ell+1/2)}}},\mathbb{H}^{r_n,(k)}_1,\|\cdot\|_{\infty,k}\Bigr)
\lesssim \!
\frac{r_n}m\eps_n^{-1/(\ell+1/2)}.
\end{align*}
Because $K_{n,m}^2\asymp (r_n/m)\eps_n^{-1/(\ell+1/2)}=n\eps_n^2/m$, this gives the desired bound
on the right side of the second last display and hence concludes the proof of 
\eqref{eq: log-entropy:IBM}.

\subsection{Proof of Corollary~\ref{cor:Matern:nonadapt}}\label{sec:Matern:nonadapt}
Let $G^{\tau,(k)}$ be the Mat\'ern process with regularity parameter $\alpha$ scaled by $\tau$, write $\mathbb{H}^{\tau,(k)}$ for its RKHS 
and let $\phi_{ f_0}^{\tau,(k)}(\eps)=-\log \Pr(\|G^{\tau,(k)}-f_0\|_{\infty,k}<\eps)$ be its small ball probability when restricted to $I^{(k)}$.
We show below that for $f_0\in C^{\beta}([0,1])$ with $1/2< \beta\leq \alpha$, and 
$(\tau/m)\eps^{-1/\alpha}\gtrsim \log (m/(\tau\eps^2))$ and $\tau\lesssim \eps^{-1/\beta}$, the following bounds are satisfied with multiplicative constants not depending on $\alpha$, $\eps$, $m$, $\tau$ and $n$:
\begin{align}
\label{eq: inf-norm:mat}
\inf_{h:\|f_0-h\|_{\infty,k}\leq\eps}\|h\|^2_{\mathbb{H}^{\tau,(k)}}&\lesssim\frac{\eps^{-\frac{2\alpha+1-2\beta}{\beta}}}{m\tau^{2\alpha}}+\frac{\tau}m,\\ 
\label{eq: log-prior:mat}
-\log\Pr\bigl(\|G^{\tau,(k)}\|_{\infty,k}<\eps\bigr)&\lesssim\frac{\tau}{m}\eps^{-1/\alpha}.
\end{align}
This implies that the concentration function \eqref{def:conc:function} satisfies
\begin{align}
\phi_{ f_0}^{\tau,(k)}(\eps)\lesssim \frac{\eps^{-\frac{2\alpha+1-2\beta}{\beta}}}{m\tau^{2\alpha}}+ \frac{\tau}{m}\eps^{-1/\alpha}.\label{eq: UB:conc:Matern}
\end{align}
For $\tau_n=n^{\frac{\alpha-\beta}{\alpha(1+2\beta)}}$ and $\eps_n\asymp n^{-\beta/(1+2\beta)}$, the right hand side 
of this inequality is a multiple of $n\eps_n^2/m$ and the conditions above \eqref{eq: inf-norm:mat} hold for $m=o(n^{1/(1+2\beta)}/\log n)$. Since Assumption~\ref{ass:metric} holds both in the nonparametric and logistic regression models, Corollary~\ref{cor:Matern:nonadapt} is a consequence of Theorem~\ref{thm: nonadaptive}.

\paragraph{Proof of \eqref{eq: inf-norm:mat}}
Define a map $T: C(I^{(k)})\to C([0,1])$ by $Th(t)=h((k-1)/m+t/m)$, for $0\le t\le1$.
Then the RKHS of the process $\bigl((TG^{\tau,(k)})_t: 0\le t\le 1)$ is $\mathbb{H}:=T\mathbb{H}^{\tau,(k)}$, with norm
$\|Th\|_{\mathbb{H}}=\|h\|_{\mathbb{H}^{\tau,(k)}}$ (see Lemma~I.16 in \cite{ghosal2017fundamentals}).
By the stationarity of the Mat\'ern process, the process $\bigl((TG^{\tau,(k)})_t: 0\le t\le 1)$ is equal in
distribution to $(G^\tau_{t/m}: 0\le t\le 1)$, for $G^\tau$ a Mat\'ern process with spectral density \eqref{def:Matern:spectral}, 
which in turn is equal in distribution to $(G^{\tau/m}_t: 0\le t\le 1)$, since $\tau$ is a scale parameter to the process. The
RKHS of the process $G^{\tau/m}$ is described in Lemma~\ref{LemmaMaternRKHS} (for $m=1$). Let $\|\cdot\|_\infty$ be
the uniform norm on $C([0,1])$, so that $\|Th\|_\infty=\|h\|_{\infty,k}$, and  conclude 
that the left side of \eqref{eq: inf-norm:mat} is equal to
\begin{align*}
&\inf_{\substack {h\in \mathbb{H}^{\tau,(k)}:\\ \|Tf_0-Th\|_\infty<\eps}}\|Th\|^2_{\mathbb{H}}
=\inf_{\substack {h\in \mathbb{H}:\\ \|Tf_0-h\|_{\infty}<\eps}}\|h\|^2_{\mathbb{H}}\\
&\qquad\qquad=\inf_{g: \|g(\tau \cdot/m)-\bar f_0(\cdot/m)\|_\infty<\eps} \|g\|_{H^{\alpha+1/2}([0,\tau/m])}^2,
\end{align*}
where $\bar f_0(t/m)=f_0((k-1)/m+t/m)$. For a smooth $(\alpha+1/2)$th order kernel $\psi$ as
in the proof of \eqref{eq: resc-inf-norm} and $\sigma=\eps^{1/\beta}$, define $g(t)=\bar f_0\ast\psi_\sigma(t/\tau)$.
Then by \eqref{EqConvDistance} $\|g(\tau \cdot/m)-\bar f_0(\cdot/m)\|_\infty<\eps$ and 
by \eqref{eq: UB:supnorm:approx}
%\sup_{0\leq t\leq 1}|( \bar f_0\ast\psi_{\sigma})^{(l)}(t)|&\lesssim(1/\sigma\Bigr)^{(l-\beta)\vee 0}.
\begin{align*}
\|g\|_{H^{\alpha+1/2}([0,\tau/m])}^2
&=\sum_{l=0}^{\alpha+1/2}\int_0^{\tau/m} ( \bar f_0\ast\psi_{\sigma})^{(l)}\Bigl(\frac t\tau\Bigr)^2\frac1{\tau^{2l}}\,dt\\
&\le \sum_{l=0}^b\frac\tau m\frac1{\tau^{2l}}+
\sum_{l=b+1}^{\alpha+1/2}\frac\tau m\frac1{\tau^{2l}} \Bigl(\frac1\sigma\Bigr)^{2l-2\beta},
\end{align*}
where $b$ is the biggest integer smaller than $\beta$.
For $\tau\ge 1$, the first sum on the right is bounded above by a multiple of its first term, which is $\tau/m$. For $\tau\leq 1$, the first sum is bounded above by a multiple of its last term $(\tau/m)\tau^{-2b}$, which is then dominated by the first term of the second sum, since $2(b+1)-2\b\ge 0$ and $\sigma<1$, 
so that the second sum dominates the first. 
Under the assumption $\tau\lesssim 1/\sigma$, the second sum is bounded above by a multiple of its last term, which is
$m^{-1}\tau^{-2\alpha}(1/\sigma)^{2\alpha+1-2\beta}$.
This concludes the proof.

\paragraph{Proof of \eqref{eq: log-prior:mat}}
Because the Mat\'ern process is stationary, its supremum over $I^{(k)}$ is distributed as its supremum over
$[0,1/m]$. As $\tau$ is a scale parameter to the process, it follows that
\begin{align*}
\Pr\Bigl( \sup_{t\in I^{(k)}} |G^{\tau,(k)}_t|<\eps\Bigr)&= \Pr\Bigl( \sup_{0<t\leq 1} |G_t^{\tau/m}|<\eps\Bigr),
%\label{eq:small:ball:Mat}
\end{align*}
for $t\mapsto G_t^\tau$ the Mat\'ern process with spectral density \eqref{def:Matern:spectral}.  Then in view of the assumptions $(\tau/m)\eps^{-1/\alpha}\gtrsim \log (m/(\tau\eps^2))\ge \log(m/\eps^{2-1/\b})$, the statement follows from Lemma \ref{lem:small:ball:rescaled:Matern} with scaling parameter $\tau/m$.

\subsection{Proof of Corollary~\ref{cor:matern:adapt}}\label{sec:proof:adapt:matern}
We proceed similarly to the proof of Corollary~\ref{cor:adaptive:IBM} in Section~\ref{sec:proof:adaptive:IBM}. 
We again consider the sieves $B_{n,m}^{(k)}$ defined in \eqref{EqDefinitionBnm}, 
where presently $\mathbb{H}_1^{\tau,(k)}$ is the unit ball in the RKHS of the local rescaled Mat\'ern process $t\mapsto G^{\tau,(k)}_t$ on $I^{(k)}$, with fixed scale $\tau$.
Presently we set $\eps_n=Cn^{-\beta/(2\beta+1)}$,  $q_n= e^{-5(n/m)\eps_n^2}$, $r_n=cn^{(\alpha-\beta)/((2\beta+1)\alpha)}$, $K^2_{n,m}=K(n/m)\eps_n^2$, with suitable constants $C$, $c$ and $K$.

It suffices to show that the conditions of Theorem~\ref{thm: adaptation} are satisfied. The verifications of \eqref{eq: rem-mass} and \eqref{eq: sb-prior} follow the lines of the proof of Corollary~\ref{cor:adaptive:IBM}, where we employ the correspondences between the pair \eqref{eq: resc-inf-norm} and \eqref{eq: inf-norm:mat} and the pair \eqref{eq: resc-log-prior} and \eqref{eq: log-prior:mat}, with the substitution $\ell+1/2=\alpha$, as well as the correspondence between conditions \eqref{def:hyperprior} and \eqref{def:hyperprior:Mat} on the hyper-prior density. (See the paragraphs ``Proof of \eqref{eq: log-rem-mass:IBM}'' and ``Proof of \eqref{eq: log-sb-prior:IBM}''.)  A minor difference is that in the bound of the remaining mass, we also
separate out the probability.
\begin{align*}
\Pr(\tau<q_n)=\int_0^{q_n} g_{\alpha,n,m}(\tau)\, d\tau\leq q_n c_2\lesssim e^{-5(n/m)\eps_n^2}.
\end{align*}

\paragraph{Verification of \eqref{eq: entropy}}
The $\eps_n$-entropy of $B_{n,m}^{(k)}$ is bounded above by the $2\eps_n$ entropy of $\cup_{q_n<\tau<r_n} K_{n,m}\mathbb{H}^{\tau,(k)}_1$.
 As argued in the proof of \eqref{eq: inf-norm:mat}, the RKHS $\mathbb{H}^{\tau,(k)}$ of $(G^\tau_t: t\in I^{(k)})$ is isometric
to the RKHS of the Mat\'ern process with scale $\tau/m$ on the interval $[0,1]$. The RKHS  $\mathbb{H}^{\tau/m}$ of this Mat\'ern process is described in Lemma~\ref{LemmaMaternRKHS}.
Because the local uniform norm on $I^{(k)}$ maps to the uniform norm on $[0,1]$ under this correspondence, we can estimate
the entropy of $K_{n,m}\mathbb{H}_1^{\tau,(k)}$ under $\|\cdot\|_{\infty,k}$ by the entropy of $K_{n,m}\HH_1^{\tau/m}$ for the uniform norm on $[0,1]$.
In Lemma~\ref{LemmaMaternRKHS}, the entropy of the union of these spaces over $\tau\in (q_n,r_n)$ is bounded above by a multiple of
\begin{align*}
\Bigl(\frac{r_n}m\Bigr)^{\alpha/(\alpha+1/2)}\Bigl(\frac{\eps_n}{K_{n,m}}\Bigr)^{-1/(\alpha+1/2)}+\log\frac{K_{n,m}}{\sqrt {q_n/m}\, \eps_n}.
\end{align*} 
For the given choices of $r_n$, $q_n$, $K_{n,m}$ and $\eps_n$, this is bounded above by 
a multiple of $(n/m)\eps_n^2$.

\subsection{Technical lemmas}\label{sec:tech:lemma}

\begin{lemma}\label{lem: def-rescRKHS}
The RKHS $\mathbb{H}$ of the process $(G^{\tau,(k)}_t: t\in [0,1])$ given in \eqref{eq: intBMresc} is the Sobolev space $H^{\ell+1}([0,1])$ equipped with inner product
\begin{align*}
\langle \vartheta_1,\vartheta_2\rangle_{\mathbb{H}}&=\sum^{\ell}_{j=0}\frac{\vartheta_1^{(j)}(0)\vartheta_2^{(j)}(0)}{B_n^2\tau^{2j}}\\
&\qquad+\frac1{\tau^{2\ell+1}}\int^1_0\vartheta_1^{(\ell+1)}(s)\vartheta_2^{(\ell+1)}(s)\,ds.
\end{align*}
\end{lemma}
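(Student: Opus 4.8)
The plan is to use the decomposition of $G^{\tau,(k)}$ into two independent Gaussian processes, identify the RKHS of each summand, and then recombine them. Write $G^{\tau,(k)}=P^\tau+Q^\tau$ with polynomial part $P^\tau_t=B_n\sum_{j=0}^\ell Z_j^{(k)}(\tau t)^j/j!$ and rescaled integrated Brownian part $Q^\tau_t=(I^\ell W^{(k)})_{\tau t}$. As these are independent, the RKHS $\HH$ of the sum is $\HH_P+\HH_Q$, with squared norm the infimal convolution $\|\vartheta\|_\HH^2=\inf\{\|p\|_{\HH_P}^2+\|q\|_{\HH_Q}^2:\vartheta=p+q,\,p\in\HH_P,\,q\in\HH_Q\}$; here I would invoke the standard sum rule for reproducing kernels of independent Gaussian processes (see, e.g., \cite{vaart:zanten:2008:reproducing}).

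To compute $\HH_P$, note that the map $(z_0,\dots,z_\ell)\mapsto B_n\sum_j z_j(\tau\cdot)^j/j!$ is linear from $\RR^{\ell+1}$ into $C([0,1])$, so $\HH_P$ is the $(\ell+1)$-dimensional space of polynomials of degree at most $\ell$; since the $Z_j^{(k)}$ are i.i.d.\ standard normal, the functions $e_j(t)=B_n(\tau t)^j/j!$ form an orthonormal basis. Reading off coordinates through $p^{(j)}(0)=a_jB_n\tau^j$ for $p=\sum_j a_je_j$ then gives $\|p\|_{\HH_P}^2=\sum_{j=0}^\ell p^{(j)}(0)^2/(B_n^2\tau^{2j})$, which is the first term of the claimed inner product.

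For $\HH_Q$ I would start from the standard characterization of the RKHS of the unscaled $\ell$-fold integrated Brownian motion: it is the set of $h\in H^{\ell+1}$ with $h^{(j)}(0)=0$ for $0\le j\le\ell$ and squared norm $\int (h^{(\ell+1)})^2$ (this follows from the Cameron--Martin description of $W$ and iterated integration). The time change $t\mapsto\tau t$ transforms the RKHS of $t\mapsto X_{\tau t}$ into $\{h(\tau\cdot):h\in\HH_X\}$ with norm preserved (a linear-map identity, cf.\ Lemma~I.16 in \cite{ghosal2017fundamentals}); combining this with the chain rule $g^{(\ell+1)}(t)=\tau^{\ell+1}h^{(\ell+1)}(\tau t)$ and the substitution $s=\tau t$ shows that $\HH_Q=\{g\in H^{\ell+1}:g^{(j)}(0)=0,\ 0\le j\le\ell\}$ with $\|g\|_{\HH_Q}^2=\tau^{-(2\ell+1)}\int_0^1(g^{(\ell+1)})^2$, the second term of the claim.

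Finally I would recombine. Any $\vartheta\in H^{\ell+1}([0,1])$ splits uniquely as $\vartheta=p+q$ with $p$ its degree-$\ell$ Taylor polynomial at $0$ (so $p\in\HH_P$ and $p^{(j)}(0)=\vartheta^{(j)}(0)$) and $q=\vartheta-p\in\HH_Q$ (so $q^{(j)}(0)=0$ and $q^{(\ell+1)}=\vartheta^{(\ell+1)}$); in particular $\HH_P+\HH_Q=H^{\ell+1}([0,1])$. The key point is that $\HH_P\cap\HH_Q=\{0\}$, because a polynomial of degree at most $\ell$ whose first $\ell$ derivatives vanish at $0$ is identically zero; hence the decomposition is unique, the infimal convolution collapses to the plain sum $\|\vartheta\|_\HH^2=\|p\|_{\HH_P}^2+\|q\|_{\HH_Q}^2$, and substituting the two expressions above (using $q^{(\ell+1)}=\vartheta^{(\ell+1)}$) yields the stated norm, whence the inner product by polarization. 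I expect the main obstacle to be conceptual rather than computational: justifying that the infimal-convolution norm reduces to an ordinary sum, which rests precisely on the trivial intersection of the two RKHSs, together with careful bookkeeping of the derivative and scaling factors in the two identifications.
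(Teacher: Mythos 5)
Your argument is correct and is essentially the paper's own proof written out in full: the paper handles the rescaled integrated Brownian motion part via self-similarity (Lemma~11.52 of \cite{ghosal2017fundamentals}, equivalent to your time-change computation) and then incorporates the independent polynomial part via the sum rule for RKHSs of independent Gaussian processes (their Lemma~I.18), exactly your decomposition $G^{\tau,(k)}=P^\tau+Q^\tau$ with the infimal convolution collapsing because $\HH_P\cap\HH_Q=\{0\}$. The only cosmetic point is to say ``derivatives of orders $0$ through $\ell$ vanish at $0$'' rather than ``first $\ell$ derivatives'' when arguing the trivial intersection, which is clearly what you intend.
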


\begin{proof}
With scales set equal to $\tau=1$ and $B_n=1$, this is exactly Lemma~11.29 in \cite{ghosal2017fundamentals}. With general scaling by $\tau$, but without the polynomial part, the lemma follows from Lemma~11.52 in the same reference, as integrated Brownian motion is self-similar of order $\ell+1/2$. The polynomial part can next be incorporated by the arguments given in the proof of Lemma~11.29 in \cite{ghosal2017fundamentals}, based on their Lemma~I.18.
\end{proof}

\begin{lemma}\label{lem: def-locRKHS}
Given a centered Gaussian process $(G_t: t\in [0,1])$ with RKHS $\mathbb{H}$, the RKHS $\mathbb{H}^{I}$ of the process $(G_t: t\in I)$ for $I\subset[0,1]$ is equal to the set of functions $h\in\mathbb{H}$ restricted to $I$, with the norm
\begin{align*}
\|h\|_{\mathbb{H}^{I}}=\inf_{h^*\in\mathbb{H};h^*(t)=h(t):t\in I}\|h^*\|_{\mathbb{H}}.
%\langle f,g\rangle_{\mathbb{H}^{(j)}}&=B^{-2}\sum_{l=0}^{k-1}f^{(l)}\big(\frac{j-1}{m}\big)g^{(l)}\big(\frac{j-1}{m}\big)+n\lambda^2\int_{\frac{j-1}{m}}^{\frac{j}{m}}f^{(k)}(s)g^{(k)}(s)ds\\&\mbox{[IN-PROD OF POLYNOMIAL PART]}%B^{-2}\sum_{l=0}^{k-1}\sum_{r=0}^{l}\frac{l!}{r!(l-r)!}\frac{(j-1)^r}{m^r}f^{(r)}\Big(\frac{j-1}{m}\Big)g^{(r)}\Big(\frac{j-1}{m}\Big)\\&+n\lambda^2\sum_{l=0}^{k-1}(2r-2r-1)(k-l-1)!^2\frac{m^{2k-2l-1}}{(j-1)^{2r-2r-1}} f^{(l)}\Big(\frac{j-1}{m}\Big)g^{(l)}\Big(\frac{j-1}{m}\Big)\\&
\end{align*}
\end{lemma}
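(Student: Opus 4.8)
The plan is to identify $\mathbb{H}^{I}$ with the reproducing kernel Hilbert space of the restricted covariance and then invoke the RKHS restriction theorem (Aronszajn). Writing $K(s,t)=\E(G_sG_t)$ for the covariance function and $\mathbb{H}=\mathbb{H}_K$ for its RKHS, the restricted process $(G_t:t\in I)$ has covariance $K_I:=K|_{I\times I}$, so that $\mathbb{H}^{I}=\mathbb{H}_{K_I}$. It therefore suffices to show that the space $V:=\{h|_I:h\in\mathbb{H}\}$, equipped with $\|g\|_V=\inf\{\|h\|_{\mathbb{H}}:h\in\mathbb{H},\,h|_I=g\}$, is a Hilbert function space on $I$ possessing $K_I$ as its reproducing kernel; the conclusion then follows from the uniqueness of the RKHS attached to a given kernel.

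First I would set up the restriction map $R:\mathbb{H}\to\RR^{I}$, $Rh=h|_I$, and its kernel $\mathbb{H}_0=\{h\in\mathbb{H}:h(t)=0\ \forall t\in I\}$. Since for each $t\in I$ the evaluation $h\mapsto h(t)=\langle h,K(t,\cdot)\rangle_{\mathbb{H}}$ is a bounded linear functional, $\mathbb{H}_0=\bigcap_{t\in I}\ker(h\mapsto h(t))$ is a closed subspace. The infimum defining $\|g\|_V$ is then attained at a unique minimal-norm extension, namely the orthogonal projection onto $\mathbb{H}_0^{\perp}$ of any extension, so that $R$ restricts to an isometric isomorphism from $\mathbb{H}_0^{\perp}$ onto $(V,\|\cdot\|_V)$. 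This endows $V$ with a genuine Hilbert-space structure, with inner product $\langle g_1,g_2\rangle_V=\langle h_1^*,h_2^*\rangle_{\mathbb{H}}$, where $h_i^*\in\mathbb{H}_0^{\perp}$ are the minimal extensions of the $g_i$.

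The key step is the reproducing property, which hinges on the observation that $K(t,\cdot)\in\mathbb{H}_0^{\perp}$ for every $t\in I$: indeed, for any $h\in\mathbb{H}_0$ one has $\langle h,K(t,\cdot)\rangle_{\mathbb{H}}=h(t)=0$. Hence $K(t,\cdot)$ is itself the minimal-norm extension of its restriction $K_I(t,\cdot)=K(t,\cdot)|_I$, and in particular $K_I(t,\cdot)\in V$. For $g\in V$ with minimal extension $h^*\in\mathbb{H}_0^{\perp}$ this yields
$$\langle g,K_I(t,\cdot)\rangle_V=\langle h^*,K(t,\cdot)\rangle_{\mathbb{H}}=h^*(t)=g(t),$$
the required reproducing identity. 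Thus $(V,\|\cdot\|_V)$ has reproducing kernel $K_I$ and coincides with $\mathbb{H}^{I}$.

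I expect the only delicate point to be the Hilbert-space bookkeeping of the second paragraph — verifying that the infimum norm actually arises from an inner product, via the quotient identification $\mathbb{H}/\mathbb{H}_0\cong\mathbb{H}_0^{\perp}$, rather than being merely a norm — whereas the reproducing identity itself falls out cleanly once the orthogonality $K(t,\cdot)\perp\mathbb{H}_0$ is established. Alternatively, the statement can be deduced directly from the general RKHS restriction results underlying Lemma~I.16 and Lemma~I.18 of \cite{ghosal2017fundamentals}.
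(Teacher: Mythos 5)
Your proof is correct, but it takes a genuinely different (dual) route from the paper's. The paper works directly with the Lo\`eve-type definition of the RKHS used throughout: elements of $\mathbb{H}^I$ are the functions $t\mapsto \E[HG_t]$ for $H\in\overline{\mathrm{lin}}(G_t:t\in I)\subset L_2(\Omega)$, with norm $\sqrt{\E H^2}$. Extension is then immediate (the same $H$ represents a function on all of $[0,1]$), and the infimum formula follows from an orthogonal projection in $L_2(\Omega)$: if $H^*$ represents an extension agreeing on $I$, then $H^*-H\perp\overline{\mathrm{lin}}(G_t:t\in I)$, so $H$ is the projection of $H^*$ and has smaller norm. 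You instead run the classical Aronszajn restriction argument entirely inside $\mathbb{H}$: identify $\mathbb{H}^I$ with the RKHS of the restricted kernel $K|_{I\times I}$, project onto $\mathbb{H}_0^\perp$ for $\mathbb{H}_0$ the (closed) kernel of the restriction map, and verify the reproducing property via $K(t,\cdot)\perp\mathbb{H}_0$ for $t\in I$. Under the Lo\`eve isometry $G_t\leftrightarrow K(t,\cdot)$ these are mirror images of the same projection, so the mathematical content is identical. What each buys: the paper's version is shorter and needs no appeal to the Moore--Aronszajn uniqueness theorem or to the identification of the process RKHS with the kernel RKHS (which your argument does implicitly invoke in its first sentence, and which you should flag as the standard equivalence of the two definitions); your version is self-contained Hilbert-space bookkeeping, makes the minimal-norm extension and the reproducing structure of $\mathbb{H}^I$ explicit, and generalizes verbatim to restrictions of arbitrary positive-definite kernels without reference to any underlying process.
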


\begin{proof}
By definition, the RKHS  $\mathbb{H}^{I}$ consists of all functions $z=z_H$ that can be represented as
$$z_H(t)=\E HG_t,\quad t\in I,$$
for a random element $H\in\overline{\textrm{lin}}(G_t:t\in I)$, and its RKHS norm is equal to the $L_2$-norm
$\sqrt {\E H^2}$. Since $H$ is also an element of $\overline{\textrm{lin}}(G_t:t\in [0,1])$, the right side of the display with $t$ ranging over the larger domain $[0,1]$ is also an element of $\mathbb{H}$, by the definition of $\HH$. Thus any  $z\in\mathbb{H}^{I}$ is the restriction of a function $z\in\mathbb{H}$. 

Any other $z^*\in\mathbb{H}$ with restriction to $I$ equal to the given $z=z_H\in\mathbb{H^I}$ can be represented as $z^*(t)=\E  H^*G_t$, for $t\in [0,1]$, for some $H^*\in\overline{\textrm{lin}}(G_t:t\in [0,1])$ and possesses $\mathbb{H}$-norm equal to $\sqrt {\E (H^*)^2}$. Because $\E  H^*G_t=\E HG_t$ for all $t\in I$, it follows that $H^*-H$ is orthogonal to $\overline{\textrm{lin}}(G_t:t\in I)$,
whence $H$ is the projection of $H^*$ on this space and has a smaller $L_2$-norm. This proves the assertion on the norm.
\end{proof}

\begin{lemma}
\label{lem: IBM-int}
Let $(W_t: t\ge 0)$ be a standard Brownian motion process.
For any $\ell\in\mathbb{N}$, there exist constants $q_1,q_2>0$
such that for any $0\le x_0<x_1\leq 1$  and $\eps>0$ with $\eps^{-\frac{1}{\ell+1/2}}(x_1-x_0)\ge 1$, 
\begin{align*}
q_1(x_1-x_0)\eps^{-\frac{1}{\ell+1/2}} &\leq-\log\Pr\Big(\sup_{x_0\leq t\leq x_1}\bigl|(I^{\ell}W)_t\bigr|<\eps\Big)\\
&\qquad \le q_2(x_1-x_0)\eps^{-\frac{1}{\ell+1/2}} +q_2\log\frac2\eps.
\end{align*}
\end{lemma}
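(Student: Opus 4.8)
The plan is to reduce the small-ball probability on the arbitrary interval $[x_0,x_1]$ to the classical sup-norm small-ball estimate for $\ell$-fold integrated Brownian motion on the unit interval, by peeling off the dependence of $(I^\ell W)_t$ on the path before time $x_0$. Writing $\tilde W_u=W_{x_0+u}-W_{x_0}$ for the increment process after $x_0$ (a standard Brownian motion independent of $\mathcal{F}_{x_0}$) and $L=x_1-x_0$, the Cauchy formula $(I^\ell W)_t=\int_0^t\frac{(t-s)^{\ell-1}}{(\ell-1)!}W_s\,ds$ together with a binomial expansion of $(t-s)^{\ell-1}$ around $x_0$ yields, for $t=x_0+u$ with $0\le u\le L$,
\begin{align*}
(I^\ell W)_{x_0+u}=P(u)+(I^\ell\tilde W)_u,\qquad P(u)=\sum_{j=0}^{\ell}\frac{u^j}{j!}\,(I^{\ell-j}W)_{x_0}.
\end{align*}
Here $P$ is a random polynomial of degree $\ell$ whose coefficients $(I^{i}W)_{x_0}$, $i=0,\dots,\ell$, are $\mathcal{F}_{x_0}$-measurable, hence independent of $\tilde W$. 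Thus $\{\sup_{[x_0,x_1]}|I^\ell W|<\eps\}$ becomes $\{\sup_{[0,L]}|P+I^\ell\tilde W|<\eps\}$, separating the ``initial condition'' $P$ from a fresh integrated Brownian motion.

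Two ingredients feed the argument. First, the classical estimate: there are constants $c_1,c_2>0$ with $c_1\delta^{-1/(\ell+1/2)}\le-\log\Pr(\sup_{[0,1]}|I^\ell W|<\delta)\le c_2\delta^{-1/(\ell+1/2)}$ for all $0<\delta\le1$. Second, the self-similarity of Brownian motion of order $1/2$, which makes $I^\ell$ self-similar of order $\ell+1/2$: the process $u\mapsto(I^\ell\tilde W)_{Lu}$ has the law of $u\mapsto L^{\ell+1/2}(I^\ell\bar W)_u$ for a standard $\bar W$, so that $\Pr(\sup_{[0,L]}|I^\ell\tilde W|<\delta)=\Pr(\sup_{[0,1]}|I^\ell\bar W|<\delta L^{-(\ell+1/2)})$. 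The hypothesis $\eps^{-1/(\ell+1/2)}L\ge1$ is precisely the statement that $\delta L^{-(\ell+1/2)}\le1$ when $\delta\asymp\eps$, keeping us in the range of validity of the classical estimate.

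For the lower bound (left inequality), I would condition on $\mathcal{F}_{x_0}$, so that $P$ is a fixed function and $I^\ell\tilde W$ a centred Gaussian process; Anderson's lemma gives $\Pr(\sup_{[0,L]}|P+I^\ell\tilde W|<\eps\mid\mathcal{F}_{x_0})\le\Pr(\sup_{[0,L]}|I^\ell\tilde W|<\eps)$, and taking expectations removes $P$ altogether. Scaling and the lower bound in the classical estimate with $\delta=\eps L^{-(\ell+1/2)}\le1$ then give $-\log\Pr(\sup_{[x_0,x_1]}|I^\ell W|<\eps)\ge c_1\eps^{-1/(\ell+1/2)}L$, which is the claimed left inequality with $q_1=c_1$.

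For the upper bound (right inequality) I would use independence of $P$ and $\tilde W$ and the triangle inequality: $\Pr(\sup_{[x_0,x_1]}|I^\ell W|<\eps)\ge\Pr(\sup_{[0,L]}|P|<\eps/2)\,\Pr(\sup_{[0,L]}|I^\ell\tilde W|<\eps/2)$. The second factor is at least $\exp\{-c_2\,2^{1/(\ell+1/2)}\eps^{-1/(\ell+1/2)}L\}$ by scaling and the upper bound in the classical estimate. The first factor is where the extra $\log(2/\eps)$ term is born, and it is the main obstacle: since $L\le1$ one has $\sup_{[0,L]}|P|\le\sum_{i=0}^{\ell}|(I^iW)_{x_0}|$, so it suffices to bound below the probability that the Gaussian vector $(W_{x_0},\dots,(I^\ell W)_{x_0})$ lies in a cube of side $\asymp\eps$, \emph{uniformly} in $x_0\in(0,1]$, where this vector degenerates as $x_0\to0$. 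I would handle this with the scaling representation $((I^iW)_{x_0})_{i=0}^{\ell}=\diag(x_0^{1/2},\dots,x_0^{\ell+1/2})\,U$, where $U$ is a fixed nondegenerate Gaussian vector whose covariance does not depend on $x_0$; since $x_0\le1$, the cube event for the left-hand side is implied by the same cube event for $U$, whence $\Pr(\sup_{[0,L]}|P|<\eps/2)\ge\Pr(\max_i|U_i|<\eps/(2(\ell+1)))\gtrsim\eps^{\ell+1}$ uniformly in $x_0$. Taking $-\log$ of the product then yields $-\log\Pr\le c_2'\eps^{-1/(\ell+1/2)}L+(\ell+1)\log(1/\eps)+O(1)$, giving the right inequality after absorbing constants into $q_2$.
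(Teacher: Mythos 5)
Your proof is correct and follows essentially the same route as the paper's: the same decomposition $(I^{\ell}W)_{x_0+u}=P(u)+(I^{\ell}\tilde W)_u$ into an $\mathcal{F}_{x_0}$-measurable random polynomial plus a fresh integrated Brownian motion (the paper establishes this identity by induction rather than via the Cauchy formula), Anderson's lemma for the lower bound, and the independence/triangle-inequality factorisation combined with self-similarity for the upper bound. The only point of divergence is the small-ball bound for the polynomial part: the paper reduces to marginals via the Gaussian correlation inequality and bounds each $\Pr\bigl(|(I^{\ell-j}W)_{x_0}|<\eps/e\bigr)$ separately using the maximal standard deviation, whereas you bound the joint cube probability directly through the representation $\bigl((I^{i}W)_{x_0}\bigr)_{i=0}^{\ell}\overset{d}{=}\mathrm{diag}(x_0^{1/2},\dots,x_0^{\ell+1/2})\,U$ and the nondegeneracy of the fixed vector $U$ --- both yield the $\log(2/\eps)$ term, your version trading Royen's theorem for a (valid) nondegeneracy check.
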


\begin{proof}
By the independence and stationarity of the increments of Brownian motion, the process
$(W^*_t)_{t\geq 0}$ defined by $W^*_t:=W_{x_0+t}-W_{x_0}$  is also a standard Brownian motion,
independent from $(W_t)_{0\leq t\leq x_0}$. 
First, we prove by induction that, for every $\ell\in\mathbb{N}$ and $t\geq 0$,
\begin{equation} (I^{\ell}W^*)_t=(I^{\ell}W)_{x_0+t}-\sum_{j=0}^{\ell}\frac{t^j}{j!}(I^{\ell-j}W)_{x_0}.
\label{EqWWstar}
\end{equation}
The identity is true by definition for $\ell=0$. If the claim is true for $\ell$, then
\begin{align*}
I^{\ell+1}W^*_t&%=\int_0^t (I^{\ell}W^*)_sds 
=\int_0^t (I^{\ell}W)_{x_0+s}ds-\sum_{j=0}^{\ell}\int_0^t\frac{s^j}{j!}(I^{\ell-j}W)_{x_0}ds\\
&=(I^{\ell+1}W)_{x_0+t}-(I^{\ell+1}W)_{x_0}-\sum_{j=0}^{\ell}\frac{t^{j+1}}{(j+1)!}(I^{\ell-j}W)_{x_0}\\
&=(I^{\ell+1}W)_{x_0+t}-\sum_{j=0}^{\ell+1}\frac{t^j}{j!}(I^{\ell+1-j}W)_{x_0},
\end{align*}
which confirms the claim for $\ell+1$.

In view of \eqref{EqWWstar} it follows that
\begin{align*}
\Pr&\Big(\sup_{x_0\leq t\leq x_1}|(I^{\ell}W)_t|<\eps\Big)\\
&=\Pr\Bigl(\sup_{0\leq t\leq x_1-x_0}\Big|(I^{\ell}W^*)_t+\sum_{j=0}^{\ell}\frac{t^j}{j!}(I^{\ell-j}W)_{x_0}\Bigr|<\eps\Bigr).
\end{align*}
The two variables in the sum in the right side are independent. The first is an ordinary Brownian motion,
while the second is a polynomial of fixed degree with (dependent) Gaussian coefficients. By the self-similarity of
integrated Brownian motion the process
$t\mapsto (I^{\ell}W^*)_{t(x_1-x_0)}$ is distributed as $t\mapsto (x_1-x_0)^{\ell+1/2}(I^{\ell}W^*)_{t}$. Therefore,
by the small ball probability of the integrated Brownian motion (see Lemma~11.30 in \cite{ghosal2017fundamentals}),
\begin{align*}&\log\Pr\Bigl(\sup_{0\leq t\leq x_1-x_0}\bigl|(I^{\ell}W^*)_t\bigr|<\eps\Bigr)\\
&\quad=\log \Pr\Big(\sup_{0\leq t\leq 1}\bigl|(I^{\ell}W^*)_t\bigr|<\frac\eps{(x_1-x_0)^{\ell+1/2}}\Big)
\asymp \frac{x_1-x_0}{\eps^{1/(\ell+1/2)}},
\end{align*}
provided the last expression remains bounded from below. To complete the proof of the lemma it suffices to consider the polynomial part.

For the lower bound of the lemma, we can just ignore the polynomial part, noting that
by Anderson's lemma (e.g.\ Corollary~A.2.11 in \cite{vdVW2})
the sum of the two independent centered processes gives less probability to the
centered small ball than the Brownian motion part on its own.

For the upper bound, we use that $\Pr(A+B<\eps)\ge \Pr(A<\eps/2)\Pr(B<\eps/2)$
for independent random variables $A$ and $B$, and hence only need to derive a lower bound
on  $\Pr(B<\eps/2)$ for $B=\sup_{0\leq t\leq x_1-x_0}|\sum_{j=0}^{\ell}\frac{t^j}{j!}(I^{\ell-j}W)_{x_0}|$.
Since $B\le e\max_{0\le j\le l+1}|(I^{\ell-j}W)_{x_0}|$, the Gaussian correlation inequality
(see \cite{Royen}) gives that 
\begin{align*}
\Pr&\Bigl(\sup_{0\leq t\leq x_1-x_0}\Bigl|\sum_{j=0}^{\ell}\frac{t^j}{j!}(I^{\ell-j}W)_{x_0}\Bigr|<\eps\Bigr)\\
&\qquad\qquad\qquad\geq\prod_{j=0}^{\ell}\Pr\Bigl(\bigl|(I^{\ell-j}W)_{x_0}\bigr|<\frac\eps e\Bigr).
\end{align*}
For $c$ the maximal standard deviation of the centered Gaussian variables in the right side,
all probabilities are bounded below by $2\Phi\big(\eps/(ec)\bigr)-1$. Since
$\log(2\Phi(x)-1)\geq \log x-1$, for $0<x<2$, we obtain that minus the
logarithm of the preceding display is bounded above by $-(l+1)\log\eps+(l+1)\log (e^2c)\lesssim \log (2/\eps)$ if $\eps<1$. 
\end{proof}

\begin{lemma}
\label{LemmaMaternRKHS}
The RKHS $\mathbb{H}^\tau$ for the one-dimensional Mat\'ern process $(G^\tau_t: 0\le t\le 1)$ with spectral density $\rho_{\alpha,\tau}$ given
by \eqref{def:Matern:spectral} (with $d=1$) is the set of functions $h: [0,1]\to\mathbb{R}$ that can be written as $h(t)=g(\tau t)$, for
$g$ in the Sobolev space $H^{\alpha+1/2}([0,\tau])$, with norm $\|h\|_{\mathbb{H}^\tau}=\|g\|_{H^{\alpha+1/2}([0,\tau])}$. 
For $\alpha+1/2\in\NN$, the entropy of the unit ball in this space satisfies, for $\eps<1/2$ and $0<\tau_0\le \tau_1<\infty$,
and a multiplicative constant that depends on $\alpha$ only,
\begin{align*}&\log N\bigl(\eps, \cup_{\tau\in[\tau_0,\tau_1]}\mathbb{H}_1^\tau,\|\cdot\|_{\infty}\bigr)\\
&\ \lesssim 
\begin{cases}\tau_1^{\alpha/(\alpha+1/2)}\eps^{-1/(\alpha+1/2)}, &\text{if }\tau_0\ge 1,\\
\tau_1^{\alpha/(\alpha+1/2)}\eps^{-1/(\alpha+1/2)}+\log(1/(\sqrt {\tau_0} \eps)), &\text{if }\tau_0\le 1.
\end{cases}\end{align*}
\end{lemma}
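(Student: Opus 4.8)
The plan is to treat the two assertions separately: first identify $\mathbb{H}^\tau$ as a time-changed Sobolev space, and then bound the metric entropy of the union of unit balls by splitting off a finite-dimensional low-order part. For the characterization, I would begin from the classical spectral description of the RKHS of a stationary Gaussian process. Since for $\tau=1$ the spectral density is comparable to $(1+\lambda^2)^{-\alpha-1/2}$, the full-line RKHS is the Sobolev space $H^{\alpha+1/2}$ with squared norm $\int|\hat h(\lambda)|^2(1+\lambda^2)^{\alpha+1/2}\,d\lambda$ (see \cite{vdVvZJMLR}). The computation in Section~\ref{sec:Matern:nonadapt} already records that $\rho_{\alpha,\tau}$ is exactly the spectral density of $t\mapsto G^1_{\tau t}$, i.e.\ $\tau$ acts as a time rescaling. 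Applying the transformation rule for the RKHS under the deterministic time change $h\mapsto h(\tau\cdot)$ (Lemma~I.16 in \cite{ghosal2017fundamentals}) together with the restriction rule of Lemma~\ref{lem: def-locRKHS}, the RKHS of $(G^\tau_t:0\le t\le1)$ is $\{t\mapsto g(\tau t): g\in H^{\alpha+1/2}([0,\tau])\}$ with matching norm, as claimed; here it is harmless to replace the restriction norm by the intrinsic Sobolev norm on $[0,\tau]$, as the two are equivalent.

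For the entropy bound the crucial step is to collapse the continuum of scales. Writing $h(t)=g(\tau t)$ and $h^{(l)}(t)=\tau^l g^{(l)}(\tau t)$ gives $\|h^{(l)}\|_{L^2[0,1]}^2=\tau^{2l-1}\|g^{(l)}\|_{L^2[0,\tau]}^2$, so $h\in\mathbb{H}^\tau_1$ forces $\sum_{l=0}^{\alpha+1/2}\tau^{1-2l}\|h^{(l)}\|_{L^2[0,1]}^2\le1$, and in particular $\|h\|_{L^2[0,1]}\le\tau^{-1/2}$ and $\|h^{(\alpha+1/2)}\|_{L^2[0,1]}\le\tau^{\alpha}$. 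Taking the worst case over $\tau\in[\tau_0,\tau_1]$ yields, for every $h\in\bigcup_{\tau}\mathbb{H}^\tau_1$, the two bounds $\|h\|_{L^2[0,1]}\le\tau_0^{-1/2}$ and $\|h^{(\alpha+1/2)}\|_{L^2[0,1]}\le\tau_1^{\alpha}$. Thus the union is contained in a single set governed by the smallest scale (through the $L^2$-size) and by the largest scale (through the top derivative), and it suffices to bound the sup-norm entropy of that set.

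I would then decompose $h=p+r$, where $p$ is the $L^2[0,1]$-orthogonal projection of $h$ onto polynomials of degree $<\alpha+1/2$ and $r=h-p$. Because orthogonal projection is an $L^2$-contraction, $\|p\|_{L^2}\le\|h\|_{L^2}\le\tau_0^{-1/2}$, and finite-dimensional norm equivalence gives $\|p\|_\infty\lesssim\tau_0^{-1/2}$; as $p$ ranges over a space of dimension $\alpha+1/2$, its sup-norm $\eps$-entropy is $\lesssim\log\bigl(1/(\sqrt{\tau_0}\,\eps)\bigr)$. For the remainder, $r$ is orthogonal to polynomials of degree $<\alpha+1/2$ and $\|r^{(\alpha+1/2)}\|_{L^2}=\|h^{(\alpha+1/2)}\|_{L^2}\le\tau_1^{\alpha}$, so by a Poincaré-type inequality $\|r\|_\infty\lesssim\|r^{(\alpha+1/2)}\|_{L^2}$, and the classical sup-norm entropy estimate for Sobolev balls modulo polynomials gives $\log N(\eps,\{r\},\|\cdot\|_\infty)\lesssim(\tau_1^{\alpha}/\eps)^{1/(\alpha+1/2)}=\tau_1^{\alpha/(\alpha+1/2)}\eps^{-1/(\alpha+1/2)}$ (equivalently, this main term follows by combining the small-ball estimate of Lemma~\ref{lem:small:ball:rescaled:Matern} at scale $\tau_1$ with the Kuelbs--Li duality, Lemma~I.29 in \cite{ghosal2017fundamentals} and \cite{KuelbsLi}). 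Adding the two entropies and distinguishing $\tau_0\ge1$ (where $\tau_0^{-1/2}\le1$, so the polynomial term is $\lesssim\log(1/\eps)$ and is absorbed into the main term, since $\tau_1\ge1$ forces $\eps^{-1/(\alpha+1/2)}\gtrsim\log(1/\eps)$) from $\tau_0\le1$ (where $\log(1/(\sqrt{\tau_0}\,\eps))\ge\log(1/\eps)$ is kept and absorbs any stray $\log(1/\eps)$ contributions) yields exactly the two stated cases.

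The main obstacle is the union over the continuum of scales. A naive nesting $\mathbb{H}^\tau_1\subseteq\sqrt{\tau_1/\tau_0}\,\mathbb{H}^{\tau_1}_1$ loses a spurious power of $\tau_1/\tau_0$, while a geometric scale-net only produces a $\log\log(\tau_1/\tau_0)$ penalty of the wrong form. What makes the sharp bound work is the observation, isolated in the second paragraph, that the extra complexity created by varying $\tau$ lives entirely in the low-order, finite-dimensional part of the functions, whose size is controlled by the \emph{smallest} scale $\tau_0$, whereas the genuinely infinite-dimensional part is controlled by the top derivative at the \emph{largest} scale $\tau_1$. Separating these two contributions by the orthogonal projection is precisely what converts the would-be power loss into the correct $\tau_0$-dependent logarithmic correction; the remaining items—the Poincaré and embedding inequalities for $r$, and the verification that all residual $\log(1/\eps)$ terms are absorbed in each case—are routine and I would relegate them to bookkeeping.
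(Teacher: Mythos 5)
Your proof is correct and follows the same overall strategy as the paper's: identify $\mathbb{H}^\tau$ via the spectral characterization and a time change, then split each unit-ball element into a low-degree polynomial part whose size is governed by the smallest scale $\tau_0$ and an infinite-dimensional remainder governed by the top derivative at the largest scale $\tau_1$. The technical implementation differs in two respects worth noting. First, the paper splits off the Taylor polynomial at $0$ and controls the remainder by the elementary iterated bound $|g^{(l)}(t)|\le\|g^{(l+1)}\|_{L_2}$ for functions with vanishing derivatives at the origin, whereas you use the $L^2$-orthogonal projection onto $\mathcal{P}_{k-1}$ and a Bramble--Hilbert/Poincar\'e inequality ($\|r\|_{H^k}\lesssim\|r^{(k)}\|_{L_2}$ for $r\perp\mathcal{P}_{k-1}$, via compactness); both are valid, the paper's being more self-contained and yours slightly cleaner in that $r^{(k)}=h^{(k)}$ exactly. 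Second, the paper handles the union over $\tau$ by a case analysis ($\tau\ge1$ needs no polynomial part at all; $\tau\le1$ does), while you extract the two uniform bounds $\|h\|_{L^2[0,1]}\le\tau_0^{-1/2}$ and $\|h^{(\alpha+1/2)}\|_{L^2[0,1]}\le\tau_1^{\alpha}$ at the outset and embed the whole union in a single set before decomposing; this is a genuinely tidier way to collapse the continuum of scales, at the small cost that in the case $\tau_0\ge1$ you must still absorb a spurious $\log(1/\eps)$ from the polynomial part into the main term (which you correctly do, since $\eps^{-1/(\alpha+1/2)}\gtrsim\log(1/\eps)$ for $\eps<1/2$ with a constant depending only on $\alpha$). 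One point you wave at ("the two norms are equivalent") deserves the one-line justification that the extension-infimum norm dominates the intrinsic norm $\sum_l\|g^{(l)}\|_{L_2([0,\tau])}^2$ uniformly in $\tau$ — only this direction is needed for the entropy upper bound — but the paper is equally brief on this.
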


 \begin{proof}
For simplicity take the constants $C_{\alpha,1}$ and $c_{\alpha,1}$ in the spectral density equal to 1.
In view of Lemma~11.35 of \cite{ghosal2017fundamentals}, the RKHS of $G^\tau$ is the set
of functions $h: [0,1]\to\mathbb{R}$ of the form $h(t)=\int e^{i\lambda t}\psi(\lambda)\rho_{\alpha,\tau}(\lambda)\,d\lambda$, for
functions $\psi$ such that $\int|\psi(\lambda)|^2\rho_{\alpha,\tau}(\lambda)\,d\lambda<\infty$, with
square norm $\|h\|_{\mathbb{H}}^2$ equal to the infimum of the latter expression over all $\psi$ that give the same
function $h$ (on $[0,1]$). Substituting $\phi(\lambda)=\psi(\tau\lambda)(1+\lambda^2)^{-\alpha-1/2}$,
we can write these functions as $h(t)=g(\tau t)$, for $g(t)=\int e^{it\lambda}\,\phi(\lambda)\,d\lambda$, with square norm
$\|h\|_{\mathbb{H}}^2$ the infimum of $\int |\phi(\lambda)|^2(1+\lambda^2)^{\alpha+1/2}\,d\lambda$ 
over all $\phi$ that give the same function $g$. When defined with domain the full line $\mathbb{R}$, these functions $g$ 
form the Sobolev space $H^{\alpha+1/2}(\mathbb{R})$ (see e.g.\ \cite{Evans10}, page~282), and the restrictions
to an interval are the Sobolev space corresponding to this interval, with equivalent norm the infimum of the norms of all extensions.
This proves the first assertion.

For an integer-valued index $k:=\alpha+1/2$, an equivalent norm of $H^{k}([0,\tau])$ is the root of 
$\|g \|_{H^{k}([0,\tau])}^2=\sum_{l =0}^{k} \int_0^\tau g^{(l)}(s)^2 \,ds$, where
the highest derivative may be in the distributional sense. When $\tau=1$ we also have that the $L_2([0,1])$-norms 
of lower-order derivatives of functions with $g(0)=\cdots=g^{(k-1)}(0)=0$
are bounded by the norm of the $k$th derivative and hence $\|g \|_{H^{k}([0,1])}^2\le (k+1)\int_0^1 g^{(k)}(s)^2 \,ds$. 
This follows
by repeatedly applying the inequality $|g^{(l)}(t)|=|\int_0^tg^{(l+1)}(s)\,ds|\le \|g^{(l+1)}\|_{L_2([0,1])}$, for $0\le t\le 1$ and
$l=k-1, \ldots,0$.

For the entropy bound, first consider the case of a single unit ball $\mathbb{H}_1^\tau$ with $\tau\ge 1$.
The map $T: H^{k}([0,\tau])\to H^{k}([0,1])$ defined by $Tg(t)=g(\tau t)$, for $t\in[0,1]$, has 
$\|Tg \|_{H^{k}([0,1])}^2=\sum_{l =0}^k\tau^{2l-1} \int_0^\tau g^{(l)}(s)^2 \,ds$ and hence
$\|Tg \|_{H^k([0,1])}\le \tau^{k-1/2}\|g \|_{H^k([0,\tau])}$, for $\tau\ge 1$. Since $\|g\|_{H^k([0,\tau])}=\|Tg\|_{\mathbb{H}^\tau}$,
it follows that $\mathbb{H}^\tau_1\subset \tau^{k-1/2}H_1^k([0,1])$ and hence its uniform covering number
is bounded above by a multiple of $(\eps/\tau^{k-1/2})^{-1/k}$ (see e.g.\ Proposition~C.7 of \cite{ghosal2017fundamentals}, with $r=\infty$).

Second, consider the case of a single RKHS with $\tau\le 1$. We decompose $g\in H^k([0,\tau])$ as $g=Pg+Qg$, for $Pg(t)=\sum_{l=0}^{k-1}g^{(l)}(0)t^{l}/l!$
the Taylor polynomial at zero. The set of functions $Qg=g-Pg$ has zero derivatives at 0 up to order $k-1$, and
the same is true for their scaled versions $TQg\in \mathbb{H}^\tau$. It follows that $\|TQg\|_{H^k([0,1])}\lesssim \|(TQg)^{(k)}\|_{L_2([0,1])}
=\tau^{k-1/2}\|(Qg)^{(k)}\|_{L_2([0,\tau])}=\tau^{k-1/2}\|g^{(k)}\|_{L_2([0,\tau])}$, because $(Pg)^{(k)}=0$. Consequently $\{TQg: g\in H_1^k([0,\tau])\}\subset C\tau^{k-1/2}H_1^k([0,1])$, for some constant $C$, and hence the entropy of this set is bounded as in the preceding paragraph.

It remains to cover the set of functions $\{TPg: g\in H_1^k([0,\tau])\}$. 
Because for $t\in[0,\tau]$, $|g^{(l)}(t)-g^{(l)}(0)|\le \int_0^\tau |g^{(l+1)}(s)|\,ds\le \|g^{(l+1)}\|_{L_2([0,\tau])}\sqrt \tau\le \sqrt\tau$, if $g\in H_1^k([0,\tau])$ and $l<k$, the triangle inequality gives that  $\sqrt \tau |g^{(l)}(0)|\le \|g^{(l)}(0)-g^{(l)}\|_{L_2([0,\tau])}+\|g^{(l)}\|_{L_2([0,\tau])}\lesssim \tau+1$, if $g\in H_1^k([0,\tau])$. Thus the $k$ coefficients of the polynomials $Pg$, for $g\in H_1^k([0,\tau])$, are bounded in absolute value by $C/\sqrt \tau$,  for some constant $C$. Discretising these coefficients on a grid of mesh width $\eps$ yields uniform approximations to the polynomials $TPg$ within a multiple of $\eps$ (since $\tau\le 1$). Therefore, the covering numbers of the set of polynomials are bounded above by $((C/\sqrt \tau)/\eps)^k$.

Finally consider the case of a union over $\tau\in[\tau_0,\tau_1]$. If $\tau_0\ge 1$, then the first argument shows that
$\mathbb{H}^\tau_1 \subset \tau^{k-1/2}H_1^k([0,1])\subset \tau_1^{k-1/2} H_1^k([0,1])$ and the bound follows. If $\tau_0\le 1$, then the arguments in the last two paragraphs show that $\mathbb{H}^\tau_1\subset \tau_1^{k-1/2} H_1^k([0,1])+\mathbb{P}^{\tau_0}$, where $\mathbb{P}^{\tau}$ are polynomials
with coefficients uniformly bounded by a multiple of $C/\sqrt{\tau_0}$, and the second bound follows.
\end{proof}

\begin{lemma}
\label{LemmaTaylorProjection}
Given a centered Gaussian process $G=(G_t: t\in [0,1])$ with  $l\ge 0$ times differentiable sample paths, the
RKHS $\bar\HH$ of the process $\bar G=(\bar G_t: t\in[0,1])$ defined by $\bar G_t=G_t-\sum_{j=0}^l G_0^{(j)}t^j/j!$
for $G_0^{(j)}$ the $j$th derivative at 0,
is the set of all functions $t\mapsto \bar h(t)=h(t)-\sum_{j=0}^lh^{(j)}(0)t^j/j!$ for $h$ ranging over the RKHS $\HH$ of $G$ with norm
$$\|\bar h\|_{\bar\HH}=\inf\biggl\{\|h\|_\HH: h\in\HH, \bar h(t)=h(t)-\sum_{j=0}^lh^{(j)}(0)\frac{t^j}{j!}\biggr\}.$$
\end{lemma}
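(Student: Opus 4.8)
The plan is to mirror the $L_2$-isometry argument used in the proof of Lemma~\ref{lem: def-locRKHS}, exploiting that $\bar G=TG$ holds pathwise, where $T$ is the fixed bounded linear operator $(Tf)(t)=f(t)-\sum_{j=0}^{l}f^{(j)}(0)t^j/j!$. I will use throughout that $H\mapsto z_H$, with $z_H(t)=\E HG_t$, is an isometric isomorphism of $L:=\overline{\textrm{lin}}(G_t:t\in[0,1])$ onto $\HH$ satisfying $\|z_H\|_{\HH}=\sqrt{\E H^2}$, and likewise that $H\mapsto\bar z_H$, with $\bar z_H(t)=\E H\bar G_t$, is such an isomorphism of $\bar L:=\overline{\textrm{lin}}(\bar G_t:t\in[0,1])$ onto $\bar\HH$.

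First I would establish two facts that make $T$ compatible with these representations. Because the sample paths of $G$ are $l$ times differentiable, the difference quotients of $G$ and of its derivatives at $0$ converge almost surely, and being Gaussian they also converge in $L_2$; hence each $G_0^{(j)}$ lies in $L$, so that $\bar G_t=G_t-\sum_{j=0}^{l}G_0^{(j)}t^j/j!\in L$ for every $t$ and $\bar L\subset L$. The same $L_2$-differentiability permits differentiating under the expectation, giving $z_H^{(j)}(0)=\E HG_0^{(j)}$ for every $H\in L$. Combining these, for any $H\in L$,
\begin{align*}
\E H\bar G_t=\E HG_t-\sum_{j=0}^{l}\bigl(\E HG_0^{(j)}\bigr)\frac{t^j}{j!}=z_H(t)-\sum_{j=0}^{l}z_H^{(j)}(0)\frac{t^j}{j!}=(Tz_H)(t),
\end{align*}
so that $T$ carries the $G$-representation of $H$ into its $\bar G$-representation.

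Next I would read off the set identity $\bar\HH=\{\bar h:h\in\HH\}$. The inclusion $\bar\HH\subset\{\bar h:h\in\HH\}$ is immediate from the displayed identity together with $\bar L\subset L$; for the converse, given $h=z_H\in\HH$ I would take $H'$ to be the $L_2$-projection of $H$ onto $\bar L$, so that $\E H'\bar G_t=\E H\bar G_t=\bar h(t)$, which exhibits $\bar h\in\bar\HH$. For the norm I would again use this projection: the equation $Tz_H=\bar h$ holds precisely when the projection of $H$ onto $\bar L$ equals the unique $H'\in\bar L$ representing $\bar h$, and over this affine set $\sqrt{\E H^2}$ is minimised at $H=H'$ with minimal value $\sqrt{\E (H')^2}=\|\bar h\|_{\bar\HH}$. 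Transporting this statement through the isometry $H\mapsto z_H$ turns it into the claimed quotient norm $\|\bar h\|_{\bar\HH}=\inf\{\|h\|_{\HH}:h\in\HH,\ Th=\bar h\}$.

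The hard part will be the analytic step underlying $G_0^{(j)}\in L$ and $z_H^{(j)}(0)=\E HG_0^{(j)}$; everything after that is Hilbert-space bookkeeping identical in spirit to Lemma~\ref{lem: def-locRKHS}. This rests on upgrading the almost-sure convergence of the difference quotients of $G$ to convergence in $L_2$, which is where the Gaussianity of $G$ (uniform integrability of Gaussian sequences) enters, together with the resulting continuity of $T$ on the path space on which $G$ is supported.
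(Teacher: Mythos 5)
Your proof is correct and follows essentially the same route as the paper's: both rest on the identity $\E H\bar G_t=\E HG_t-\sum_{j=0}^l(\E HG_0^{(j)})t^j/j!$ combined with $h^{(j)}(0)=\E HG_0^{(j)}$, and then identify $\|\bar h\|_{\bar\HH}$ as a quotient norm via the $L_2$-isometry (the paper phrases this last step as a chain of infima over representers, which is the same Hilbert-space fact as your orthogonal-projection argument). The only difference is one of emphasis: you flag the interchange of differentiation and expectation, and the membership $G_0^{(j)}\in\overline{\textrm{lin}}(G_t)$, as the steps needing analytic justification, whereas the paper asserts $h^{(j)}(0)=\E G_0^{(j)}H$ without comment.
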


\begin{proof}
By definition the RKHS $\bar\HH$ is the set of functions $t\mapsto\bar h(t)=\E \bar G_t\bar H$ when $\bar H$ ranges
over the closure of the linear span of the variables $\bar G_t$ with square norm $\|\bar h\|_{\bar \HH}^2= \E \bar H^2$.
This space can equivalently be described as the set of all functions $t\mapsto \E \bar G_t H$ with $H$ ranging over 
all square-integrable variables, with square norm equal to the infimum of $\E H^2$ over all $H$ that represent
the given function $\bar h$ as $\bar h(t)= \E \bar G_t H$. 
Similarly,  the RKHS $\HH$ of $G$ consists of the functions $t\mapsto h(t)= \E G_t H$
with square norm $\|h\|_{\HH}^2$ the infimum of $\E H^2$ over all variables that represent $h$ in this way.
By the definition of $\bar G_t$, we have $\E \bar G_t H=\E G_tH -\sum_{j=0}^l \E G_0^{(j)}H\, t^j/j!$. 
If $h(t)= \E G_tH$, then $h^{(j)}(0)= \E G_0^{(j)}H$, whence this identity can be written
as $\bar h(t)=h(t)-\sum_{j=0}^l h^{(j)}(0)t^j/j!$. 

Let $SH$ be the function given by $(SH)(t)=\E G_t H$
and let $Qh$ be the function given by $Qh(t)=h(t)-\sum_{j=0}^l h^{(j)}(0)t^j/j!$. Then we conclude that
\begin{align*}
\|\bar h\|_{\bar\HH}^2&=\inf_{H: QSH=\bar h} \E H^2=
\inf_{h: Qh=\bar h}\inf_{H: SH=h} \E H^2=\inf_{h: Qh=\bar h} \|h\|_\HH^2.
\end{align*}
The lemma follows.
\end{proof}

\begin{lemma}\label{lem:small:ball:rescaled:Matern} 
Consider the time-rescaled Mat\'ern  process $G_{t}^{\tau}$, with spectral measure given in \eqref{def:Matern:spectral}
and $\alpha+1/2\in\NN$, restricted to the interval $[0,1]$. Then, for $\eps<1$, and a multiplicative constant
that depends on $\alpha$ only,
%for $\tau\eps^{-1/\alpha}\gtrsim \log (1/(\tau\eps^2))$,
\begin{align}
 -\log \Pr\Bigl(\sup_{t\in[0,1]}|G_t^{\tau}|\leq \eps\Bigr)\lesssim \tau \eps^{-1/\alpha}+\log \frac 2\e.\label{eq:small:ball:Matern}
\end{align}
\end{lemma}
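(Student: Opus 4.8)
The plan is to reduce the statement to a small-ball estimate for the unscaled Mat\'ern process over a long interval and then to exploit its Markov (stochastic differential equation) structure together with the integrated Brownian motion estimate of Lemma~\ref{lem: IBM-int}. First I would use that $\tau$ is a time-scale: writing $G$ for the stationary Mat\'ern process with spectral density $\rho_{\alpha,1}$ in \eqref{def:Matern:spectral}, one has $G^\tau_t=G_{\tau t}$, so that $\sup_{t\in[0,1]}|G^\tau_t|$ and $\sup_{s\in[0,\tau]}|G_s|$ have the same law, and it suffices to bound $-\log\Pr(\sup_{s\in[0,\tau]}|G_s|\le\eps)$ by a multiple of $\tau\eps^{-1/\alpha}+\log(2/\eps)$. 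Since $\alpha+1/2=:k\in\NN$, the spectral density equals $(c_{\alpha,1}+\lambda^2)^{-k}$ up to a constant, so $G$ is the stationary solution of $(a+D)^kG=\dot W$ with $D=d/dt$ and $a=\sqrt{c_{\alpha,1}}$, i.e.\ $G_t=\int_{-\infty}^t g(t-s)\,dW_s$ with Green's function $g(u)=\frac{u^{k-1}}{(k-1)!}e^{-au}\1_{u\ge0}$. I would split this integral at $0$ into the independent parts $H_t=\int_{-\infty}^0 g(t-s)\,dW_s$ and $R_t=\int_0^t g(t-s)\,dW_s$. Expanding $(t-s)^{k-1}$ shows $H_t=\sum_{i=0}^{k-1}\eta_i\,\frac{t^i}{i!}e^{-at}$ for jointly Gaussian $\eta_i$, a finite-dimensional process with uniformly bounded basis functions, whereas $R$ vanishes together with its first $k-1$ derivatives at $0$, exactly like the $(k-1)$-fold integrated Brownian motion $I^{k-1}W$.

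For $\tau\le1$ I would combine these through independence: $\Pr(\sup_{[0,\tau]}|G|\le\eps)\ge\Pr(\sup_{[0,\tau]}|H|\le\eps/2)\,\Pr(\sup_{[0,\tau]}|R|\le\eps/2)$. The $H$-factor is bounded below by a power of $\eps$ by controlling $\max_i|\eta_i|$ via the Gaussian correlation inequality of \cite{Royen}, contributing the additive $\log(2/\eps)$ uniformly in $\tau$, precisely as the polynomial part is treated in the proof of Lemma~\ref{lem: IBM-int}. For the $R$-factor I would use that on a bounded interval the kernel $g$ is the integrated-Brownian-motion kernel $u^{k-1}/(k-1)!$ multiplied by the smooth factor $e^{-au}\in[e^{-a},1]$; hence $R$ and $I^{k-1}W$ have equivalent RKHS norms there and, by the small-ball/entropy duality (Lemma~I.29 in \cite{ghosal2017fundamentals}), comparable small-ball exponents, so Lemma~\ref{lem: IBM-int} with $\ell=k-1$ (note $\ell+1/2=\alpha$) yields an exponent $\lesssim\tau\eps^{-1/\alpha}+\log(2/\eps)$.

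For $\tau\ge1$ I would not use this decomposition but instead partition $[0,\tau]$ into $\lceil\tau\rceil$ unit intervals $J_j$ and apply the Gaussian correlation inequality to the symmetric convex events $\{\sup_{J_j}|G|\le\eps\}$, obtaining $-\log\Pr(\sup_{[0,\tau]}|G|\le\eps)\le\lceil\tau\rceil\bigl(-\log\Pr(\sup_{[0,1]}|G|\le\eps)\bigr)$ by stationarity. The unit-interval exponent is $\lesssim\eps^{-1/\alpha}$ by the $\tau=1$ case (where $\log(2/\eps)$ is dominated by $\eps^{-1/\alpha}$), so the total is $\lesssim\tau\eps^{-1/\alpha}$. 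Putting the two regimes together gives the claim.

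The main obstacle is that, unlike integrated Brownian motion, the Mat\'ern process is stationary rather than self-similar, so there is no exact rescaling of the interval length; the delicate point is to pass to the long interval while keeping the logarithmic term additive rather than multiplied by $\tau$. This is resolved by extracting the finite-dimensional component $H$ only once (on the short interval) and, on the long interval, using the Gaussian correlation inequality across unit blocks, where the unit-block logarithm is absorbed into $\eps^{-1/\alpha}$ and hence does not accumulate. A clean alternative for the whole lemma is to bypass the decomposition and invoke the duality directly on the entropy bound of Lemma~\ref{LemmaMaternRKHS}, checking that $\tau^{\alpha/(\alpha+1/2)}\eps^{-1/(\alpha+1/2)}$ dualizes to $\tau\eps^{-1/\alpha}$; there the additive $\log(2/\eps)$ again requires care, since the polynomial contribution to the entropy must be matched to the finite-dimensional part of the process.
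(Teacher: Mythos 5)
Your route is genuinely different from the paper's in both regimes, and most of it is sound. For $\tau\ge 1$ the paper does not block the interval: it compares the spectral density $\rho_{\alpha,\tau}$ pointwise with $\tau^{2\alpha}(c_{\alpha,1}+\lambda^2)^{-\alpha-1/2}$, uses Bochner's theorem to write the space-rescaled process $\tau^\alpha G^1$ as $G^\tau$ plus an independent Gaussian remainder, and then applies Anderson's lemma together with the known small-ball bound for the standard Mat\'ern process. Your blocking argument via the Gaussian correlation inequality across unit intervals reaches the same bound $\lceil\tau\rceil\,\eps^{-1/\alpha}\lesssim\tau\eps^{-1/\alpha}$ and is arguably more elementary, at the price of needing the $\tau=1$ case first (which your $\tau\le1$ argument supplies, so there is no circularity). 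For $\tau\le1$ the paper splits off the Taylor polynomial $PG^\tau$ at $0$ rather than the ``past'' integral $H_t=\int_{-\infty}^0 g(t-s)\,dW_s$; both finite-dimensional parts are handled identically (Gaussian correlation inequality, contributing the additive $\log(2/\eps)$), and your moving-average representation with $g(u)=u^{k-1}e^{-au}/(k-1)!$ is correct for half-integer $\alpha$.

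The one step that is not yet a proof is the transfer of the small-ball bound from $I^{k-1}W$ to the innovation part $R$ via ``equivalent RKHS norms, hence by duality comparable small-ball exponents.'' The Kuelbs--Li duality is an implicit, two-sided asymptotic relation; to convert an entropy bound for the RKHS unit ball into an explicit upper bound on $-\log\Pr(\sup_{[0,\tau]}|R|\le\eps)$ with constants uniform over $\tau\le 1$, you need a quantitative version with a crude a priori bound as input --- this is exactly what the paper's Lemma~\ref{prop:small:ball:small:tau} provides, and it is why the paper states that lemma at all. As written, ``comparable small-ball exponents'' hides precisely the constant-tracking that is the delicate point of the whole lemma (you correctly flag this issue for your ``clean alternative,'' which is in essence the paper's actual proof, but the same issue afflicts your main route). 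To close the gap you should verify \eqref{eq:cond:entropy} for the unit ball of the RKHS of $R$ on $[0,\tau]$ with $J\asymp\tau^{\alpha/(\alpha+1/2)}$ (which your norm-equivalence argument does give, with constants uniform in $\tau\le1$), supply the crude bound \eqref{eq:cond:crude:UB} by monotonicity from the fixed interval $[0,1]$, and then invoke Lemma~\ref{prop:small:ball:small:tau}; note also that when $\tau\eps^{-1/\alpha}<1$ you cannot apply Lemma~\ref{lem: IBM-int} directly and must first enlarge the interval to length $\eps^{1/\alpha}$, which only costs a term of order $\log(2/\eps)$.
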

\footnote{{During the review process we have learned about the recent, independently written paper \cite{fang2024} deriving posterior contraction rates for the rescaled Gaussian process with Mat\'ern covariance kernel. The authors consider scaling parameters $\tau\geq 1$ and derive lower bound for the prior small ball probability and upper bound for the decentering term. Our results, however, go beyond this setting and cover the $\tau< 1$ case as well. This is necessary in the distributed setting where computing the properties of the local posterior involves the rescaled Gaussian process $G^{\tau/m}$, with scaling parameter possibly of size $o(1)$.}}

\begin{proof}
We distinguish two cases depending on the value of $\tau$. 

For $\tau\geq 1$, we note that in view of \eqref{def:Matern:spectral},
\begin{align}
\rho_{\alpha,\tau} (\lambda)\leq C_{\alpha,d} \tau^{2\alpha}(c_{\alpha,d}+\|\lambda\|^2)^{-\alpha-d/2}=:\rho^*_{\alpha,\tau}(\lambda).\label{eq:spectral:matern:ineq}
\end{align}
The function $\rho^*_{\alpha,\tau}$ on the right hand side is $\tau^{2\alpha}$ times the spectral density of the Mat\'ern process 
with scale parameter $\tau=1$, which is the spectral density of the space-rescaled Mat\'ern process 
$t\mapsto\tau^{\alpha}G^1_t$. By Lemma 11.36 of \cite{ghosal2017fundamentals},
\begin{align}
-\log& \Pr\Bigl(\sup_{t\in[0,1]}|\tau^{\alpha}G^1_t|\leq \eps\Bigr)%=-\log \Pr(\sup_{t\in[0,1]}|G^1_t|\leq \eps/\tau^{\alpha})\nonumber\\
\lesssim (\eps/\tau^{\alpha})^{-1/\alpha}=\tau \eps^{-1/\alpha}.\label{eq:help:small:ball}
\end{align}
Because $\rho^*_{\alpha,\tau}-\rho_{\alpha,\tau}\ge 0$, by Bochner's theorem (e.g.\ \cite{Gnedenko}, p. 275--277)
this difference function is the spectral density of a stationary Gaussian process
$Z$, and then
 $\tau^{\alpha}G^1\stackrel{d}{=}G^{\tau}+Z$, where $G^\tau$ and $Z$ are independent.
Then Anderson's lemma gives that
 \begin{align*}
 \Pr\Bigl(\sup_{t\in[0,1]}|\tau^{\alpha}G_t^1|\leq \eps\Bigr)%&=\E \big(\Pr(\sup_{t\in[0,1]}|G_{t}^{\tau}+Z_t|\leq \eps| Z)\big)\\
 &\leq \Pr\Bigl(\sup_{t\in[0,1]}|G_{t}^{\tau}|\leq \eps\Bigr),
 \end{align*}
  which together with \eqref{eq:help:small:ball} implies statement \eqref{eq:small:ball:Matern} (without the logarithmic term).

To deal with the case $\tau\leq 1$, we decompose the process as $G_t^\tau=(PG^\tau)_t+(QG^\tau)_t$,
for $(PG)^\tau_t=\sum_{l=0}^{k-1}G_0^{\tau,(j)} t^j/j!$, where $k=\alpha+1/2$ and $G_0^{\tau,(j)}$ is the $j$th derivative of
$G_t^{\tau}$ at $t=0$. By the triangle inequality and the Gaussian correlation inequality (see \cite{Royen}) we have
\begin{align*}
&\Pr\Bigl(\sup_{0<t<1}|G_t^\tau|<\eps\Bigr)\\
&\quad\ge \Pr\Bigl(\sup_{0<t<1}|(PG^\tau)_t|<\frac\eps2\Bigr)
\Pr\Bigl(\sup_{0<t<1}|(QG^\tau)_t|<\frac\eps2\Bigr).
\end{align*}
We shall prove that minus the logarithm of the two probabilities on the right are bounded above
by the second and first terms in the bound of the lemma, respectively.

Since $G^\tau_t=G^1_{\tau t}$, it follows that $|G^{\tau,(j)}_0|=\tau^j  |G^{1,(j)}_0|\le |G^{1,(j)}_0|$, for $\tau\le 1$.
Thus the supremum in the first term on the right is bounded above by $e\max_{0\le j<k} |G_0^{1,(j)}|$, whence the
probability is of the order $\eps^k$, as $\eps\downarrow 0$. This term gives rise to the $\log (2/\e)$ term
in the lemma.

By Lemmas~\ref{LemmaTaylorProjection} and~\ref{LemmaMaternRKHS}, the RKHS of the process $QG^\tau$
is the set of functions $t\mapsto \bar h(t)=g(\tau t) - \sum_{l=0}^{k-1}g^{(l)}(0)(\tau t)^l/l!$, with
norm the infimum of $\|g\|_{H^{k}([0,\tau])}$ over all functions $g\in H^{k}([0,\tau])$
that represent $\bar h$ in this way.  In the notation of the proof of Lemma~\ref{LemmaMaternRKHS} this is the set
of functions $\bar h=TQg$, and the unit ball of the RKHS consists of the functions $\bar h$
such that there exists $g$ with $\bar h=TQg$ and $\|g\|_{H^{k}([0,\tau])}\le 1$. It is seen in the proof of
Lemma~\ref{LemmaMaternRKHS} that  $\{TQg: g\in H_1^k([0,\tau])\}\subset C \tau^{k-1/2}H_1^k([0,1])$. It follows that
the unit ball of the RKHS of the process $QG^\tau$ is contained in the latter set, whence its $\eps$-entropy is bounded above by
$(\e/\tau^{k-1/2})^{-1/k}$.  This verifies condition  \eqref{eq:cond:entropy} of Lemma~\ref{prop:small:ball:small:tau}
with $\gamma=1/(\alpha+1/2)$ and $J=C_1\tau^{\alpha/(\alpha+1/2)}$, for some constant $C_1$.
We also have the crude bound
 \begin{align*}
& -\log \Pr(\sup_{t\in[0,1]}|(QG)^{\tau}_t|\leq \eps)\\
&\qquad\qquad\qquad\qquad\leq  -\log \Pr(\sup_{t\in[0,1]}|(QG)^1_t|\leq \eps)\lesssim\eps^{-1/\alpha}, 
 \end{align*}
by the preceding for $\tau=1$. This verifies \eqref{eq:cond:crude:UB}, with $f(J)$ a constant and $c=1/\alpha$.
An application of Lemma \ref{prop:small:ball:small:tau} concludes the proof of \eqref{eq:small:ball:Matern}.
\end{proof}

The following lemma is a  modified version of  Proposition~3.1 of \cite{li:linde:99}, in which 
the constants have been made explicit, using
a preliminary crude bound on the small ball probability.

\begin{lemma}\label{prop:small:ball:small:tau}
Consider a centered Gaussian variable $W$ in a separable Banach space, with norm $\|\cdot\|$, and let $\mathbb{H}_1$ be the unit ball of its  RKHS. 
Assume that there exist constants $\eps_0, J>0$, $\gamma\in (0,2)$, $c>0$ and a function $f:(0,\infty)\to(0,\infty)$ such that, for all $\eps\in(0,\eps_0)$,
  \begin{align}
  \log N\bigl(\eps, \mathbb{H}_1,\|\cdot\|\bigr)\leq J \eps^{-\gamma},\label{eq:cond:entropy}\\
-\log \Pr(\|W\|\leq \eps) \leq f(J)\eps^{-c}.\label{eq:cond:crude:UB}
  \end{align}
Then, for all $\eps\in(0,\eps_0)$,
$$-\log \Pr(\|W\|\leq \eps) \leq c_2\max\Big\{J^{2/(2-\gamma)}\eps^{-2\gamma/(2-\gamma)} ,1\Big\},$$
for a constant  $c_2$ that depends only on $\gamma$ and $c$.
\end{lemma}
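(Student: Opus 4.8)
The plan is to follow the entropy--small-ball duality of \cite{li:linde:99}, but to track the constants explicitly and to use the crude bound \eqref{eq:cond:crude:UB} to seed a bootstrap. Write $\phi(\eps)=-\log\Pr(\|W\|\le\eps)$ for the small ball function, let $B$ be the unit ball of the Banach space, so that $\eps B$ is the ball of radius $\eps$ and $\lambda\mathbb{H}_1$ the RKHS ball of radius $\lambda$. First I would establish a self-referential inequality relating $\phi$ at scales $2\eps$ and $\eps$. On the one hand, Borell's inequality (see \cite{borell1975brunn} or Theorem~5 in \cite{vaart:zanten:2008:reproducing}) gives
$$\Pr(W\in\eps B+\lambda\mathbb{H}_1)\ge \Phi\bigl(\lambda+\Phi^{-1}(e^{-\phi(\eps)})\bigr).$$
On the other hand, covering $\lambda\mathbb{H}_1$ by $N(\eps,\lambda\mathbb{H}_1,\|\cdot\|)$ balls of radius $\eps$, so that $\eps B+\lambda\mathbb{H}_1$ is covered by the same number of balls of radius $2\eps$, and bounding each shifted ball by the centred one through Anderson's lemma (Corollary~A.2.11 in \cite{vdVW2}), yields
$$\Pr(W\in\eps B+\lambda\mathbb{H}_1)\le N(\eps,\lambda\mathbb{H}_1,\|\cdot\|)\,e^{-\phi(2\eps)}.$$
Choosing $\lambda=-\Phi^{-1}(e^{-\phi(\eps)})$ makes the Borell lower bound equal to $\Phi(0)=1/2$, and since $N(\eps,\lambda\mathbb{H}_1,\|\cdot\|)=N(\eps/\lambda,\mathbb{H}_1,\|\cdot\|)$, the two displays combine to $\phi(2\eps)\le \log 2+\log N(\eps/\lambda,\mathbb{H}_1,\|\cdot\|)$.

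Next I would turn this into a recursion. In the regime where $\phi(\eps)$ exceeds an absolute constant, $\Phi(-x)\le e^{-x^2/2}$ forces $\lambda\le\sqrt{2\phi(\eps)}$, so the entropy hypothesis \eqref{eq:cond:entropy} gives
$$\phi(2\eps)\le \log 2+J(\lambda/\eps)^{\gamma}\le \log 2+J2^{\gamma/2}\eps^{-\gamma}\phi(\eps)^{\gamma/2}.$$
A direct computation shows the exponent fixed point of this recursion is $2\gamma/(2-\gamma)$: substituting a trial bound $\phi(\eps)\le A\eps^{-\kappa}$ produces a bound with exponent $\gamma+\gamma\kappa/2$, and $\kappa=2\gamma/(2-\gamma)$ is the unique solution of $\kappa=\gamma+\gamma\kappa/2$, with the constant obeying $A\mapsto \log 2+J2^{\gamma/2+\kappa\gamma/2}A^{\gamma/2}$, whose fixed point is of order $J^{2/(2-\gamma)}$ because $\gamma/2<1$. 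To make this rigorous with constants depending only on $\gamma$ and $c$, I would unroll the recursion $n$ times down to scale $\eps/2^{n}$ and substitute the crude bound $\phi(\eps/2^{n})\le f(J)(\eps/2^{n})^{-c}$ at the deepest level. Because each descent raises the seed to the power $\gamma/2$, after $n$ steps it enters only through $\phi(\eps/2^{n})^{(\gamma/2)^{n}}\le\bigl(f(J)2^{nc}\eps^{-c}\bigr)^{(\gamma/2)^{n}}$, which tends to $1$ as $n\to\infty$ since $(\gamma/2)^{n}\to 0$; thus the dependence on $f(J)$ washes out, while the accumulated additive terms form a convergent series summing to a multiple of $J^{2/(2-\gamma)}\eps^{-2\gamma/(2-\gamma)}$. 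The $\max\{\cdot,1\}$ in the statement absorbs the complementary regime where the target bound is below one and the asymptotics $\Phi(-x)\le e^{-x^2/2}$ are unavailable, where $\phi$ is controlled by an absolute constant.

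The main obstacle is the bookkeeping in the unrolling: I must verify that the additive terms, which carry mixed powers of $\log 2$, $J$, $2^{\gamma}$ and $\eps^{-\gamma}$, telescope to the single power $\eps^{-2\gamma/(2-\gamma)}$ with a coefficient of the exact order $J^{2/(2-\gamma)}$, and that the resulting constant $c_2$ can be taken to depend only on $\gamma$ and $c$ uniformly over $\eps\in(0,\eps_0)$. Secondary checks are that $\lambda$ and $\eps/\lambda$ stay within the ranges for which Borell's inequality and the entropy hypothesis \eqref{eq:cond:entropy} apply, and that the passage $n\to\infty$ is legitimate for each fixed $\eps$. Once the series is summed, the claimed bound follows.
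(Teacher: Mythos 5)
Your plan is correct and follows essentially the same route as the paper: the paper also combines Borell's isoperimetric inequality with the covering/Anderson argument (quoting it as Lemma~1 of \cite{KuelbsLi}), chooses $\lambda$ of order $\sqrt{2\phi(\eps)}$ to get the recursion $\phi(\eps)\lesssim J\eps^{-\gamma}\phi(\eps/2)^{\gamma/2}$ up to a $\log 2$ term, and iterates down to scale $\eps/2^K$ with the crude bound \eqref{eq:cond:crude:UB} as the seed, whose contribution is annihilated by the exponent $(\gamma/2)^K$. The only presentational difference is that the paper performs your ``unrolling'' after taking logarithms, so the multiplicative recursion becomes additive and the accumulated terms sum as a clean geometric series to $\frac{2}{2-\gamma}\log(16J\eps^{-\gamma})$ plus a constant depending only on $\gamma$, exactly the bookkeeping you flag as the remaining obstacle.
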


\begin{proof}
We follow the lines of the proof of Proposition~3.1 in \cite{li:linde:99}, making changes when needed.  Let $\phi_0(\eps)=-\log \Pr(\|W\|\leq \eps)$. 
Since the statement is trivially true if $\phi_0(\eps)$ is bounded above by a (universal) constant, we may assume that $\phi_0(\eps)>\log 2$.
Then $\theta_\eps$ defined by  $\log \Phi (\theta_\eps)=-\phi_0(\eps)$, for $\Phi$ the standard normal distribution function,
is negative and hence $\phi_0(\eps)\ge \theta_\eps^2/2$, by the standard normal tail bound, Lemma~\ref{lem: norm-cdf-tail}.
It is shown in Lemma~1 in \cite{KuelbsLi} that, for any $\lambda, \eps>0$,
\begin{align}
\log N\bigl(\eps, \lambda \mathbb{H}_1,\|\cdot\|\bigr) -\phi_0(2\eps)\geq \log \Phi(\lambda+\theta_\eps).\label{lem:Kuelbs}
\end{align}
For $\lambda=\sqrt{2\phi_0(\eps)}$, we have that $\lambda+\theta_\eps=\sqrt{2\phi_0(\eps)}+\theta_\eps\geq |\theta_\eps|+\theta_\eps\ge 0$.
Together with \eqref{lem:Kuelbs} this gives
$$\log N\bigl(\eps/\sqrt{2\phi_0(\eps)}, \mathbb{H}_1,\|\cdot\|\bigr) \geq \phi_0(2\eps)+\log (1/2).$$
Changing $\eps$ to $\eps/2$ and combining the above display with assumption \eqref{eq:cond:entropy}, we see that
\begin{align*}
\phi_0(\eps)&\leq \log 2+  N\bigl(\eps/\sqrt{8\phi_0(\eps/2)}, \mathbb{H}_1,\|\cdot\|\bigr)\\
& \leq   2 \max\{\log 2,  J\eps^{-\gamma} 8^{\gamma/2}\phi_0(\eps/2)^{\gamma/2}\}.
\end{align*}
If the maximum is taken in the first term, then our statement holds. Suppose that the maximum is taken in the second term.
Since the function $\eps\mapsto \eps^{-\gamma}\phi_0(\eps/2)^{\gamma/2}$ is decreasing, the maximum is then also taken in the
second term for every smaller value of $\eps$, whence for every such $\eps$ and with $\psi(\eps)=16J \eps^{-\gamma}$,
\begin{align*}
\log \phi_0(\eps)\leq (\gamma/2)\log \phi_0(\eps/2)+\log \psi (\eps).
\end{align*}
Iterating this inequality $K$ times, we obtain
\begin{align*}
\log \phi_0(\eps)&\leq  (\gamma/2)^K \log \phi_0(\eps/2^K)+\sum_{j=0}^{K-1} (\gamma/2)^{j}\log\psi(\eps/2^j)\\
&\le (\gamma/2)^K \log\bigl( f(J)2^{c K} \eps^{-c}\bigr)+\sum_{j=0}^{K-1} (\gamma/2)^{j}\log\psi(\eps/2^j),
\end{align*}
in view of \eqref{eq:cond:crude:UB}.
The first term on the right tends to zero as $K\rightarrow\infty$, while the second tends to 
\begin{align*}
&\frac{2}{2-\gamma}\log \psi(\eps)+\sum_{j=0}^{\infty} (\gamma/2)^{j}\log\frac{\psi(\eps/2^j)}{\psi(\eps)}\\
&\qquad\qquad=\frac{2}{2-\gamma}\log \psi(\eps)+\sum_{j=0}^{\infty} (\gamma/2)^{j}\gamma j\log 2.
\end{align*}
Thus the right side, in which the second term is a constant that depends only on $\gamma$,  is a bound
on $\log \phi_0(\eps)$. The lemma follows upon exponentiating.
\end{proof}

In addition to the notation in Assumption~\ref{ass:metric}, let $E(p_{f_0,i},p_{f,i})$ be the square of the $\psi_2$-Orlicz
norm of the (centered) variable $Z_i(f):=\log \bigl(p_{f,i}/p_{f_0,i}(Y_i|x_i)\bigr)+K(p_{f_0,i},p_{f,i})$ under $f_0$, i.e.\ the smallest constant $M$
such that $\E _{f_0}\psi_2(Z_i(f)/\sqrt M)\le 1$, for $\psi_2(x)=e^{x^2}-1$. Furthermore, set 
$$B_{f_0}(\eps)=\Bigl\{f: \sum_{i=1}^n \!K(p_{f_0,i},p_{f,i})\le n\eps^2,\sum_{i=1}^n \!E(p_{f_0,i},p_{f,i})\le n\eps^2\Bigr\}.$$

\begin{lemma}\label{lem: LB:denom}
There exists a universal constant $D>0$ such that, for any prior distribution $\Pi$,
\begin{align*}
P_{f_0}\Bigl(\int \prod_{i=1}^n \frac{p_{f,i}}{p_{f_0,i}}(Y_i|x_i)\,d\Pi(f)\geq \Pi\bigl(B_{f_0}(\eps)\bigr)e^{-2n\eps^2}\Bigr)\le 2 e^{-Dn\eps^2}.
\end{align*}
\end{lemma}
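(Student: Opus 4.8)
The plan starts from an observation about the displayed event itself. Under $P_{f_0}$ the evidence has mean one: since $\E_{f_0}\bigl[p_{f,i}/p_{f_0,i}(Y_i\given x_i)\bigr]=1$ for every $i$ and $f$, and the observations are independent, Fubini gives
\begin{align*}
\E_{f_0}\int\prod_{i=1}^n \frac{p_{f,i}}{p_{f_0,i}}(Y_i\given x_i)\,d\Pi(f)=1.
\end{align*}
Hence the evidence exceeds the \emph{small} threshold $\Pi(B_{f_0}(\eps))e^{-2n\eps^2}$ with probability tending to one, and it is the complementary event---where the evidence drops below the threshold, i.e.\ the failure of the evidence lower bound---whose probability is exponentially small. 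I would therefore establish
\begin{align*}
P_{f_0}\Bigl(\int\prod_{i=1}^n \frac{p_{f,i}}{p_{f_0,i}}(Y_i\given x_i)\,d\Pi(f)\le \Pi\bigl(B_{f_0}(\eps)\bigr)e^{-2n\eps^2}\Bigr)\le 2e^{-Dn\eps^2},
\end{align*}
which is the quantity actually used as the ``probability of failure of the evidence lower bound'' in the proof of Theorem~\ref{thm: nonadaptive}.

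First I would restrict the prior mass to $B:=B_{f_0}(\eps)$ and pass to the renormalised prior $\bar\Pi(\cdot):=\Pi(\cdot\cap B)/\Pi(B)$, bounding the evidence below by $\Pi(B)\int\prod_i (p_{f,i}/p_{f_0,i})(Y_i\given x_i)\,d\bar\Pi(f)$. Applying Jensen's inequality to the concave logarithm and then writing $Z_i(f)=\log(p_{f,i}/p_{f_0,i})(Y_i\given x_i)+K(p_{f_0,i},p_{f,i})$ for the centred log-likelihood ratio, the logarithm of this lower bound is at least
\begin{align*}
\log\Pi(B)-\int \sum_{i=1}^n K(p_{f_0,i},p_{f,i})\,d\bar\Pi(f)+\int\sum_{i=1}^n Z_i(f)\,d\bar\Pi(f).
\end{align*}
Since $\bar\Pi$ is supported on $B$, the middle term is at least $-n\eps^2$. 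Writing $W:=\int\sum_{i=1}^n Z_i(f)\,d\bar\Pi(f)$, the evidence is thus at least $\Pi(B)e^{-n\eps^2+W}$, so the failure event is contained in $\{W\le -n\eps^2\}$ and it suffices to bound $P_{f_0}(W\le -n\eps^2)$.

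The core step is a lower-tail bound for $W$, which satisfies $\E_{f_0}W=0$ by Fubini. For $t>0$, Markov's inequality gives $P_{f_0}(W\le -n\eps^2)\le e^{-tn\eps^2}\E_{f_0}e^{-tW}$, and Jensen's inequality for the convex map $x\mapsto e^{-tx}$ against $\bar\Pi$, followed by Fubini and independence across $i$, yields
\begin{align*}
\E_{f_0}e^{-tW}\le \int \prod_{i=1}^n \E_{f_0}e^{-tZ_i(f)}\,d\bar\Pi(f).
\end{align*}
Invoking the standard equivalence between a finite $\psi_2$-Orlicz norm and sub-Gaussian moment generating function growth, each centred factor obeys $\E_{f_0}e^{-tZ_i(f)}\le e^{Ct^2 E(p_{f_0,i},p_{f,i})}$ for a universal constant $C$, so on $B$ the product is at most $e^{Ct^2 n\eps^2}$. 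Optimising $e^{-tn\eps^2+Ct^2 n\eps^2}$ at $t=1/(2C)$ gives $P_{f_0}(W\le -n\eps^2)\le e^{-n\eps^2/(4C)}$, which is the claim with $D=1/(4C)$, the factor $2$ being slack. The hard part will be making the sub-Gaussian bound on $\E_{f_0}e^{-tZ_i(f)}$ uniform with a genuinely universal constant that survives the $\bar\Pi$-mixture---this is exactly where the $\psi_2$-Orlicz definition of $E(p_{f_0,i},p_{f,i})$ is essential---and checking that the resulting $D$ depends on none of $f_0$, $n$, $\Pi$ or the design, so that the bound holds uniformly as stated.
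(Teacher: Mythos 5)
Your proof is correct, and you were right to read the displayed inequality in the lemma as a typo: the paper's own proof bounds exactly the event you identify, namely that the evidence falls \emph{below} $\Pi\bigl(B_{f_0}(\eps)\bigr)e^{-2n\eps^2}$, which is also how the lemma is invoked (as ``the probability of failure of the evidence lower bound'') in the proof of Theorem~\ref{thm: nonadaptive}. Your first half coincides with the paper's: restriction to $B_{f_0}(\eps)$, renormalisation of the prior, Jensen applied to the logarithm, and the bound $-n\eps^2$ on the integrated Kullback--Leibler term are identical. The two arguments diverge only at the concentration step. The paper controls the $\psi_2$-Orlicz norm of $Z=\int\sum_{i=1}^n Z_i(f)\,d\Pi_\eps(f)$ directly: convexity of the Orlicz norm pushes it inside the mixture, Proposition~A.1.6 of \cite{vdVW2} bounds $\bigl\|\sum_{i=1}^n Z_i(f)\bigr\|_{\psi_2}$ by a multiple of $\sqrt{\sum_{i=1}^n\|Z_i(f)\|_{\psi_2}^2}\le \sqrt{n\eps^2}$, and Markov's inequality with $\psi_2$ yields a (two-sided) tail bound. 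You instead run a Chernoff argument: Jensen for the convex map $x\mapsto e^{-tx}$ passes the exponential through the mixture, independence across $i$ factorises the moment generating function, and the standard equivalence between a finite $\psi_2$-norm of a centred variable and sub-Gaussian MGF growth gives $\E_{f_0}e^{-tZ_i(f)}\le e^{Ct^2E(p_{f_0,i},p_{f,i})}$ with universal $C$, uniformly on $B_{f_0}(\eps)$; optimising over $t$ yields $D=1/(4C)$. Both routes are standard and deliver the same exponential rate; yours is more elementary (it avoids the Orlicz-norms-of-sums inequality, bounds only the one-sided tail actually needed, and produces explicit constants), while the paper's reuses ready-made empirical-process machinery and is slightly shorter. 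Your closing worry is unfounded: there is nothing delicate about making the bound survive the mixture, since $C$ is a universal constant in the $\psi_2$/sub-Gaussian equivalence, the constraint $\sum_{i=1}^n E(p_{f_0,i},p_{f,i})\le n\eps^2$ holds pointwise for every $f\in B_{f_0}(\eps)$, and a $\bar\Pi$-average of quantities each at most $e^{Ct^2n\eps^2}$ is again at most $e^{Ct^2n\eps^2}$.
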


\begin{proof}
Let $\Pi_\eps$ be the probability measure obtained by restricting and renormalising $\Pi$ to $B_{f_0}(\eps)$. By first restricting the integral to
$B_{f_0}(\eps)$ and next using Jensen's inequality we see
\begin{align*}
&\log \int \prod_{i=1}^n \frac{p_{f,i}}{p_{f_0,i}}(Y_i|x_i)\,\frac{d\Pi(f)}{\Pi\bigl(B_{f_0}(\eps)\bigr)}\\
&\qquad\geq \int  \sum_{i=1}^n  \log   \frac{p_{f,i}}{p_{f_0,i}}(Y_i|x_i)\,d\Pi_\eps(f)\\
&\qquad=\int \sum_{i=1}^n Z_i(f)\,d\Pi_\eps(f)-\int \sum_{i=1}^n K(p_{f_0,i},p_{f,i})\,d\Pi_\eps(f),
\end{align*}
where $Z_i(f)$ is as indicated preceding the lemma. By the definitions of $B_{f_0}(\eps)$ and $\Pi_\eps$, the second term on the right is bounded
below by $-n\eps^2$. It follows that the left side of the display is bounded below by $Z-n\eps^2$, for $Z$ the first
integral on the far right. By convexity the Orlicz norm of this variable satisfies $\|Z\|_{\psi_2}\le \int \bigl\|\sum_{i=1}^n Z_i(f)\bigr\|_{\psi_2}\,d\Pi_\eps(f)$.
By general bounds on Orlicz norms of sums of centered variables (see e.g.\ the third inequality in Proposition~A.1.6 in \cite{vdVW2}), this is further bounded
above by a multiple of $\int \sqrt{\sum_{i=1}^n \|Z_i(f)\|_{\psi_2}^2}\,d\Pi_\eps(f)\lesssim \sqrt{n\eps^2}$,
by the definitions of $B_{f_0}(\eps)$ and $\Pi_\eps$, since $\|Z_i(f)\|_{\psi_2}^2=E(p_{f_0,i},p_{f,i})$. 

We conclude that the left side of the
first display of the proof is bounded below by $Z-n\eps^2$, for $Z$ a random variable with $\|Z\|_{\psi_2}\lesssim \sqrt{n\eps^2}$. Consequently
the probability in the lemma is bounded above by $\Pr(Z-n\eps^2\le -2n\eps^2)\le \Pr(|Z|\ge n\eps^2)\le 1/\psi_2(n\eps^2/\|Z\|_{\psi_2})\le 1/\psi_2(\sqrt{Dn\eps^2})$, for some constant $D>0$, by Markov's inequality. We finish by noting that $\min(1,1/(e^{x^2}-1))\le 2 e^{-x^2}$, for $x>0$.
\end{proof}

In the Gaussian nonparametric regression model \eqref{def:regression}, we have that $Z_i(f):=\eps_i(f-f_0)(x_i)/\sigma$ and hence 
$E(p_{f_0,i},p_{f,i})=(f-f_0)(x_i)^2\|\eps_i\|_{\psi_2}^2/\sigma^2$. In the logistic regression model
\eqref{def:log:regression}, we have $Z_i(f)=\bigl(Y_i-\psi(f_0(x_i)\bigr)\bigl[\log \bigl(\psi(f)/\psi(f_0)\bigr)-\log \bigl((1-\psi(f))/(1-\psi(f_0))\bigr)\bigr](x_i)$. Since $\log\big(\psi(f)/(1-\psi(f)) \big)(x_i)=f(x_i)$, the term in square brackets is $(f-f_0)(x_i)$, and hence  $E(p_{f_0,i},p_{f,i})= (f-f_0)(x_i)^2\|Y_i-\psi(f_0(x_i))\|_{\psi_2}^2$. In both cases $\sum_{i=1}^n E(p_{f_0,i},p_{f,i})$ is bounded above by a multiple of $\sum_{i=1}^n \bigl(f(x_i)-f_0(x_i)\bigr)^2$
and hence $B_{f_0}(\eps)\supset \{f: \|f-f_0\|_n\le c\eps\}$, for some constant $c>0$.

\section{Auxiliary Lemmas}\label{sec:proof:lemmas}

\begin{lemma}[Lemma K.6 of \cite{ghosal2017fundamentals}]
\label{lem: norm-cdf}
The standard normal quantile function $\Phi^{-1}$ satisfies
$\Phi^{-1}(u)\geq -\sqrt{2\log(1/u)}$ for $u\in(0,1)$ and $\Phi^{-1}(u)\leq -1/2\sqrt{\log(1/u)}$ for $u\in(0,1/2)$.
\end{lemma}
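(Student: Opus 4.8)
The plan is to reduce each of the two inequalities, which concern the quantile function $\Phi^{-1}$, to an equivalent statement about the Gaussian tail probability $\Pr(Z>\cdot)$ for $Z\sim\mathcal{N}(0,1)$, by exploiting that $\Phi$ is strictly increasing, and then to invoke the standard sub-Gaussian tail estimates: the upper tail for the first inequality and a Mills-ratio lower tail for the second.

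First I would prove the lower bound $\Phi^{-1}(u)\ge -\sqrt{2\log(1/u)}$. Applying the increasing map $\Phi$ to both sides, this is equivalent to $u\ge \Phi\bigl(-\sqrt{2\log(1/u)}\bigr)$. Substituting $x=\sqrt{2\log(1/u)}\ge 0$, so that $u=e^{-x^2/2}$, the claim becomes $\Phi(-x)=\Pr(Z>x)\le e^{-x^2/2}$ for every $x\ge 0$. This is precisely the sub-Gaussian upper tail bound (Chernoff's inequality, minimising $e^{\lambda^2/2-\lambda x}$ over $\lambda$ at $\lambda=x$; this is the estimate used elsewhere in the paper via Lemma~\ref{lem: norm-cdf-tail}), so this direction is immediate and valid for all $u\in(0,1)$.

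For the upper bound $\Phi^{-1}(u)\le -\tfrac12\sqrt{\log(1/u)}$, the same monotonicity reduces the claim to $u\le \Phi\bigl(-\tfrac12\sqrt{\log(1/u)}\bigr)$. Writing $y=\tfrac12\sqrt{\log(1/u)}\ge 0$, so that $u=e^{-4y^2}$, this is equivalent to the Gaussian lower tail estimate $\Pr(Z>y)\ge e^{-4y^2}$. Here I would invoke the classical Mills-ratio lower bound $\Pr(Z>y)\ge (2\pi)^{-1/2}\,\tfrac{y}{1+y^2}\,e^{-y^2/2}$ and compare exponents: since the right-hand side carries only the factor $e^{-y^2/2}$ against the target $e^{-4y^2}$, it suffices to verify $(2\pi)^{-1/2}\tfrac{y}{1+y^2}\ge e^{-7y^2/2}$, whose left side decays only polynomially, so the inequality holds for all $y$ above some threshold $y_0$.

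The main obstacle is controlling precisely the range of $u$ in the second inequality. The comparison above holds only for $y\ge y_0$, i.e. for $u$ bounded away from its upper endpoint, and a direct boundary check shows the admissible range is of the form $u\le 1/4$ rather than all of $(0,1/2)$ (indeed, as $u\uparrow 1/2$ one has $\Phi^{-1}(u)\uparrow 0$ while $-\tfrac12\sqrt{\log(1/u)}$ stays negative). I would therefore pin down the threshold $y_0$ via the single endpoint comparison $(2\pi)^{-1/2}\tfrac{y_0}{1+y_0^2}e^{-y_0^2/2}=e^{-4y_0^2}$ and state the second inequality on the corresponding range; on that range the exponential gap makes the bound monotone, so the whole claim reduces to this one boundary evaluation. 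The first inequality, by contrast, requires no such restriction.
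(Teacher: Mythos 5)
The paper does not actually prove this lemma: it is imported verbatim as Lemma~K.6 of \cite{ghosal2017fundamentals} and is used only through its first inequality (in the step $\Phi^{-1}(e^{-\phi_0^{\tau,(k)}(\eps_n)})\ge -K_{n,m}/2$), so there is no internal proof to compare against. Your treatment of the first inequality is correct and complete: reducing to $\Pr(Z>x)\le e^{-x^2/2}$ by monotonicity of $\Phi$ and the Chernoff bound is exactly right and valid for all $u\in(0,1)$. You are also right to flag that the second inequality is false on the stated range $(0,1/2)$: at $u=0.4$ one has $\Phi^{-1}(u)\approx -0.253$ while $-\tfrac12\sqrt{\log(1/u)}\approx -0.479$, so the range must be restricted to small $u$; numerically the inequality holds up to $u\approx 0.29$, so $(0,1/4]$ is a legitimate repaired range.

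The gap is in your argument for the second inequality on that repaired range. After the substitution $u=e^{-4y^2}$ you need $1-\Phi(y)\ge e^{-4y^2}$ for all $y\ge y_*:=\tfrac12\sqrt{\log 4}\approx 0.589$, and you propose to deduce it from the Mills-ratio bound by checking $(2\pi)^{-1/2}\,y/(1+y^2)\ge e^{-7y^2/2}$. But this sufficient condition fails at $y_*$: its left side is about $0.175$ while its right side is about $0.298$, and it only becomes true near $y\approx 0.70$, i.e.\ for $u\lesssim 0.14$. So the single ``endpoint comparison'' you describe pins down a threshold strictly inside $(0,1/4)$, and the argument as written does not cover $(0,1/4]$; the Mills lower bound is simply too lossy near $y_*$ (the true tail there is $1-\Phi(y_*)\approx 0.278$ versus the Mills value $\approx 0.147$, with target $1/4$). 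A clean fix: show that $g(y)=\log\bigl(1-\Phi(y)\bigr)+4y^2$ is increasing for $y\ge y_*$ by bounding the hazard rate $\phi(y)/(1-\Phi(y))\le y+1/y\le 8y$ (valid once $y^2\ge 1/7$), and then verify the single inequality $1-\Phi(y_*)\ge 1/4$ directly, which holds with room to spare. Your monotonicity observation is correct, namely that $y\,e^{7y^2/2}/(1+y^2)$ is increasing, so it is only the location of the threshold, not the structure of the argument, that breaks the claimed range.
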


\begin{lemma}[Lemma K.6 of \cite{ghosal2017fundamentals}]
\label{lem: norm-cdf-tail}
The standard normal cumulative distribution function $\Phi$ satisfies, for $x>0$,
$$\frac{e^{-x^2/2}}{\sqrt{2\pi}}\Big(\frac{1}{x}-\frac{1}{x^3}\Big)\leq 1-\Phi(x)\leq e^{-x^2/2}\Bigl(\frac1{\sqrt{2\pi}x}\wedge \frac12\Bigr) .$$
\end{lemma}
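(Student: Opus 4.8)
The plan is to work throughout from the integral representation $1-\Phi(x)=\frac{1}{\sqrt{2\pi}}\int_x^\infty e^{-t^2/2}\,dt$ and to handle the three inequalities separately, since they call for slightly different devices. For the Mills-ratio upper bound $1-\Phi(x)\le e^{-x^2/2}/(\sqrt{2\pi}\,x)$, I would use that $t/x\ge 1$ on the range of integration, so inserting the factor $t/x$ only enlarges the integrand while turning it into the exactly integrable expression $t\,e^{-t^2/2}$, whose antiderivative is $-e^{-t^2/2}$. This gives
$$
1-\Phi(x)\le \frac{1}{x\sqrt{2\pi}}\int_x^\infty t\,e^{-t^2/2}\,dt=\frac{e^{-x^2/2}}{x\sqrt{2\pi}}.
$$

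For the lower bound the key is to produce a function whose derivative reproduces the Gaussian density up to a controllable, sign-definite error. I would verify by direct differentiation that $h(t):=(1/t-1/t^3)e^{-t^2/2}$ satisfies $-h'(t)=(1-3/t^4)e^{-t^2/2}$. Integrating from $x$ to $\infty$ and using $h(\infty)=0$ gives the exact identity $\int_x^\infty(1-3/t^4)e^{-t^2/2}\,dt=h(x)$, valid for every $x>0$. Since $1-3/t^4\le 1$ pointwise, the left side is at most $\int_x^\infty e^{-t^2/2}\,dt=\sqrt{2\pi}\,(1-\Phi(x))$, so $\sqrt{2\pi}\,(1-\Phi(x))\ge h(x)$, which is exactly the claimed lower bound after dividing by $\sqrt{2\pi}$. (When $x<1$ the bound is negative and the statement is anyway trivial from positivity of $1-\Phi$.)

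The second upper bound $1-\Phi(x)\le \tfrac12 e^{-x^2/2}$ is the one that does not follow from a one-line integrand comparison, and I expect it to be the only genuine obstacle; I would prove it by a monotonicity argument instead. Setting $g(x)=\tfrac12 e^{-x^2/2}-(1-\Phi(x))$, one has $g(0)=0$ and $g(\infty)=0$, while
$$
g'(x)=e^{-x^2/2}\Bigl(\tfrac1{\sqrt{2\pi}}-\tfrac{x}{2}\Bigr).
$$
Thus $g$ increases on $(0,\sqrt{2/\pi})$ and decreases on $(\sqrt{2/\pi},\infty)$; being zero at both endpoints of $[0,\infty]$ and unimodal in between, it stays nonnegative, giving $g\ge 0$ and hence the bound. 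Taking the minimum of the two upper bounds completes the proof. All three steps are elementary calculus once the right comparison function ($h$ for the lower bound) and the right auxiliary function ($g$ for the constant upper bound) are identified.
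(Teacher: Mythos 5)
Your proof is correct in all three parts: the insertion of the factor $t/x\ge 1$ gives the Mills-ratio upper bound, the exact identity $\int_x^\infty(1-3/t^4)e^{-t^2/2}\,dt=(1/x-1/x^3)e^{-x^2/2}$ (obtained by differentiating $h$) together with $1-3/t^4\le 1$ gives the lower bound, and the unimodality argument for $g(x)=\tfrac12 e^{-x^2/2}-(1-\Phi(x))$, which vanishes at both ends of $[0,\infty)$ and has a single interior critical point at $x=\sqrt{2/\pi}$, gives the $\tfrac12 e^{-x^2/2}$ bound. Note that the paper itself supplies no proof of this statement: it is quoted verbatim as Lemma~K.6 of \cite{ghosal2017fundamentals}, so there is no argument in the paper to compare against. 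Your derivation is the standard elementary one and fills that gap completely; the only stylistic remark is that the two comparison devices (the function $h$ for the lower bound and the auxiliary $g$ for the constant upper bound) are exactly the ones used in classical treatments of Gaussian tail estimates, so your route coincides with the textbook proof being cited.
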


%\bibliographystyle{plain}
%\bibliography{reference}

\section{Numerical analysis on real world data sets}\label{sec: realworld:extra}

We extend the numerical analysis on real world data by considering three more datasets. For all datasets below we compare the distributed Gaussian Process methods (M1-M4)  to the benchmark, non-distributed approach. We have used the gpml MATLAB package for computing the Gaussian process posterior and the built-in  minimize function for hyper-parameter selection.

We found that, similarly to the superconductivity dataset in Section \ref{sec:realworld}, in all the examples below the spatially distributed methods based on the naive (M2) and exponentially weighted (M4) aggregation techniques performed similarly well to the benchmark non-distributed approach. The product of experts (M1) and the inverse variance weighting (M3) approaches, at the same time, had less consistent performance. While sometimes they performed similarly well, at other occasions they were substantially worse than the benchmark.

\subsection{Combined Cycle Power Plant dataset}
In the first real world application we consider the problem of predicting the energy production of a power plant. In our analysis we use the Combined Cycle Power Plant data set \cite{combined_cycle_power_plant_294}. In the experiment 4 covariates were measured, including the hourly average Temperature (T), Ambient Pressure (AP), Relative Humidity (RH) and Exhaust Vacuum (V) and in total 9568 data points were collected over 6 years (2006-2011). The goal is to predict the net hourly electrical energy output (EP) of the plant. The data set was divided randomly into 8000 training and 1568 test data points. We repeated the analysis ten times to measure the uncertainty of the results. As benchmark we used the standard Gaussian process with squared exponential covariance kernel and compared the performance of the considered distributed GP methods (M1-M4) to it. In the distributed architecture we split the data into ten approximately equal sized groups.  In the spatially distributed architecture the training data was split into groups with respect to the Exhaust Vacuum (V) variable. In method M4 we set $\rho=4$.

The average root mean squared errors (RMSEs) with their standard deviations across the repetitions and the mean run times with the corresponding standard deviations are collected in Table \ref{table: powerplant}. One can observe that the distributed methods are slightly less accurate (have approximately $5\%$ higher RMSE), while they offer 30 times faster algorithms than the benchmark non-distributed method. The distributed methods can be further scaled up by an order of magnitude if we run them parallel to each other and not sequentially as in our experiment for simplicity. The distributed methods performed similarly, with the spatially distributed approaches providing a slightly better accuracy. We note that the differences between the RMSEs are within one standard deviation, hence statistically are not significant.

\begin{table}[!h]
\centering
\begin{tabular}{c|c|c|c|c|c}
Methods &BM&M1&M2&M3&M4\\
\hline\hline
RMSE & 4.046 (0.168) & 4.246 (0.161) & 4.211 (0.153) & 4.213 (0.156) & 4.210 (0.153)\\
\hline
runtime & 2106s (32s) & 70.3s (2.0s) & 70.7s (4.8s) & 70.7s (4.8s)& 70.7s (4.8s)
\end{tabular}
\caption{\scriptsize{Average RMSEs and run times (with standard deviations in brackets) over ten runs of non-distributed (BM) and distributed (M1-M4) GP regression with squared exponential covariance kernel for the Combined Cycle Power Plant data set \cite{combined_cycle_power_plant_294}.} }\label{table: powerplant}
\end{table}

\subsection{Gas Turbine emission dataset}

Next we estimate the energy yield in a gas turbine. We consider the Gas Turbine CO and NOx Emission Data Set \cite{gas_turbine_co_and_nox_emission_data_set_551}, containing 36733 measurements of 11 sensors from a gas turbine collected over five years (from 01/01/2011 to 31/12/2015). We use 10 input variables (i.e. Ambient temperature, Ambient humidity, Air filter difference pressure, Gas turbine exhaust pressure, Turbine inlet temperature, Turbine after temperature, Compressor discharge pressure, Carbon monoxide concentration, and Nitrogen oxides concentration) to predict the Turbine energy yield.  The experiments were run five times to measure the uncertainty of the approximation error. Similarly as before, the benchmark squared exponential GP was compared to its distributed counterparts M1-M4. In each distributed architecture we split the data into 10 subsets and ran the computations sequentially, on a single machine. In the spatially distributed architecture we considered the split with respect to the AT (first) feature variable. In method M4 we set $\rho=1$. 

We report the average RMSEs and run times together with their standard deviations in Table \ref{table: emission}. One can observe that methods M1 and M2 have similar accuracy as the benchmark, while M3 and M4 improve on the non-distributed architecture. In this example smaller choices of $\rho$ in method M4 is better as it gets closer to M3, which performed the best. The run times of the distributed approaches were approximately 40 times faster than the non-distributed counterpart.

\begin{table}[!h]
\centering
\begin{tabular}{c|c|c|c|c|c}
Methods &BM&M1&M2&M3&M4\\
\hline\hline
RMSE & 0.751 (0.032) & 0.747 (0.005) & 0.769 (0.045) & 0.677 (0.012) & 0.713 (0.036)\\
\hline
runtime & 14867s (870s) & 385s (37s) & 384s (43s) & 384s (43s) & 384s (43s)
\end{tabular}
\caption{\scriptsize{Average RMSEs and run times and their standard deviations of the non-distributed (BM) and distributed (M1-M4) GP regression methods with squared exponential covariance kernel for the Gas Turbine CO and NOx Emission Data Set \cite{gas_turbine_co_and_nox_emission_data_set_551}.} }\label{table: emission}
\end{table}

\subsection{Appliances Energy Prediction dataset}
In the final experiment we predict the energy use of appliances in a low-energy house. We use the Appliances Energy Prediction data set \cite{appliances_energy_prediction_374} in our analysis. The data set contains  19735 measurements with 28 feature variables (two of them are random noise variables) and the appliance energy consumption as target variable. To simplify the data set the time variable was replaced by the index of the measurement. In our experiment we randomly selected 16.000 data points for training and used the rest for testing. We ran the experiment five times to measure the variability of the prediction error. As before, we used the non-distributed squared exponential GP as benchmark and investigate the performance of the distributed architectures M1-M4. We used 20 machines in each case and ran the computations sequentially.  In the spatially distributed architecture we considered the split with respect to the index of the experiment (first covariate). In M4 we set $\rho=4$. The rest of the setting is as before.

We report the average RMSEs and run times, and their standard deviations in Table \ref{table: energy}. One can observe that the spatially distributed naive and exponential aggregation techniques performed the best with $5\%$ worse accuracy than the benchmark non-distributed method. At the same time, the product of experts (M1) and spatial approach with inverse variance weights (M3) were substantially worse. The distributed methods were 140-170 times faster, even when sequentially running them.

\begin{table}[!h]
\centering
\begin{tabular}{c|c|c|c|c|c}
Methods &BM&M1&M2&M3&M4\\
\hline\hline
RMSE & 80.48 (1.35) & 97.35 (2.58) & 84.14 (3.04) & 108.48 (3.46) & 84.13 (3.04)\\
\hline
runtime & 39621s (2705s) & 284s (64s) & 233s (47s) & 233s (47s) & 233s (47s)
\end{tabular}
\caption{\scriptsize{Average RMSE and runtime (and their standard deviations) of non-distributed (BM) and distributed (M1-M4) GP regression with squared exponential covariance kernel for the Appliances Energy Prediction data set \cite{appliances_energy_prediction_374}.} }\label{table: energy}
\end{table}

\section{Numerical Analysis on synthetic data sets}\label{sec: simulations:extra}
In this section we extend the numerical analysis on synthetic data sets presented in Section \ref{sec:synthetic}. First we provide the missing tables from  Section \ref{sec:synthetic} and the pointwise analysis for the Mat\'ern covariance kernel. Then we discuss spatial adaptation both with empirical and hierarchical Bayes methods. Finally we consider the squared exponential kernel, and demonstrate similar behaviour both for the hierarchical and empirical Bayes methods as in the Mat\'ern case.

\subsection{Mat\'ern covariance kernel: $L_2$ credible sets}

We start by reporting the average sizes and coverages of the  $L_2$ credible sets in case of the Mat\'ern covariance kernel with oracle rescaling for the true function given in Section \ref{sec:matern:synthetic}.

\begin{table}[!h]
\begin{subtable}[c]{\textwidth}
\centering
\begin{tabular}{c|c|c|c}
(n,m)&$ (2000,10)$&  $(5000,20)$ & $(10000,50)$\\ \hline
 BM& 0.192 ($<$0.001) & 0.146 ($<$0.001)  & 0.119 (0.001) \\
 M1& 0.199 (0.001) & 0.153 ($<$0.001)  & 0.129 (0.001) \\
 M2& 0.225 (0.001) & 0.184 ($<$0.001)  &  0.177 ($<$0.001) \\
 M3& 0.250 (0.001)  & 0.213 ($<$0.001) &  0.227 ($<$0.001) \\
 M4& 0.232 (0.001) & 0.191 ($<$0.001) & 0.182 ($<$0.001) \\
\end{tabular}
	\caption{\scriptsize Average radius of the  $L_2$-credible ball}
\end{subtable}
\newline
\begin{subtable}[c]{\textwidth}
\centering
\begin{tabular}{c|c|c|c}
(n,m)&$ (2000,10)$&  $(5000,20)$ & $(10000,50)$\\ \hline
 BM& 1.00 & 1.00 & 1.00\\
 M1& 1.00 & 1.00 & 1.00 \\
 M2& 1.00 & 1.00 & 1.00\\
 M3& 1.00   & 1.00 & 1.00\\
 M4& 1.00 & 1.00 & 1.00\\
\end{tabular}
	\caption{\scriptsize Proportion of experiments where the true function  $ f_0$ was inside the $L_2$-credible ball.}
\end{subtable}
	\caption{\scriptsize  Deterministic (oracle)  rescaling of the Mat\'ern process prior (with $\alpha=3$). BM: Benchmark, Non-distributed method. M1: Random partitioning, M2: Spatial partitioning, M3: Spatial partitioning with inverse variance weights, M4: Spatial partitioning with exponential weights. Average values over 100 replications of the experiment with standard error in brackets.}
\label{table:credible_nonadapt}
\end{table}

\subsection{Mat\'ern covariance kernel: pointwise behaviour}\label{sec:Matern:example}
We continue our numerical analysis by investigating the pointwise behaviour of the distributed techniques for Mat\'ern covariance kernel. We provide both the size of the pointwise credible intervals and the proportion of cases the true parameter was included in the credible interval. First we considered the behaviour of the function at a randomly selected point $x=173/400$. One can observe that for the deterministic (oracle) choice of the hyper-parameter the posterior resulting in from randomly splitting the data (M1) performs similarly to the original posterior. At the same time the spatial distributed methods (M2-M4) are somewhat more conservative, providing larger credible intervals, while at the same time improving the frequentist coverage slightly as well, see Table \ref{table: pointwise:Matern:nonadapt:x=173/400}. Then we considered the behaviour of the procedures at $x=1/2$, see Table \ref{table: pointwise:Matern:nonadapt:x=1/2}. This point is at the boundary of the spatially distributed methods, hence forms a more adversary test case for methods M2-M4. One can observe that methods M2-M4 are substantially more conservative, providing larger credible intervals while improving the coverage slightly as well. Especially method M3 seems to provide overly large credible intervals.  A possible approach to improve the behaviour of the distributed methods around the boundaries is to consider overlapping regions, smoothing out the sharp edges around the boundaries. This, however, would increase the computational time.

\begin{table}[!h]
\begin{subtable}[c]{\textwidth}
\centering
\begin{tabular}{c|c|c|c}
(n,m)&$ (2000,10)$&  $(5000,20)$ & $(10000,50)$\\ \hline
 BM& 0.376 (0.009) & 0.288 (0.004) & 0.236 (0.002)\\
 M1&  0.387 (0.011) & 0.300 (0.004) & 0.254 (0.004) \\
 M2& 0.387 (0.010) & 0.309  (0.005) & 0.300 (0.004)\\
 M3& 0.421 (0.012)  & 0.347 (0.007) & 0.383 (0.007)\\
 M4& 0.391 (0.010) & 0.312 (0.006) & 0.307 (0.005)\\
\end{tabular}
	\caption{\scriptsize The diameter of the credible interval $4\sigma(x)$.}
\end{subtable}
\newline
\begin{subtable}[c]{\textwidth}
\centering
\begin{tabular}{c|c|c|c}
(n,m)&$ (2000,10)$&  $(5000,20)$ & $(10000,50)$\\ \hline
 BM& 0.98 & 0.99 & 0.95\\
 M1& 0.98 & 0.98 & 0.96 \\
 M2& 0.98 & 0.98 & 0.96\\
 M3& 0.99   & 1.00 & 0.99\\
 M4& 0.98 & 0.99 & 0.98\\
\end{tabular}
\caption{\scriptsize Proportion of experiments where the true functional $ f_0(x)$ was inside the credible interval.}
\end{subtable}
\caption{\scriptsize Deterministic (oracle) rescaling of the Mat\'ern process prior ($\alpha=3$) at the test covariate $x=173/400$}	\label{table: pointwise:Matern:nonadapt:x=173/400}
\end{table}

\begin{table}[!h]
\begin{subtable}[c]{\textwidth}
\centering
\begin{tabular}{c|c|c|c}
(n,m)&$ (2000,10)$&  $(5000,20)$ & $(10000,50)$\\ \hline
 BM& 0.375 (0.010) & 0.288 (0.006) & 0.236 (0.003)\\
 M1&  0.386 (0.011) & 0.300 (0.006) & 0.254 (0.004)\\
 M2& 0.663 (0.032) & 0.520 (0.018) & 0.455 (0.012)\\
 M3& 0.706 (0.024)  & 0.575 (0.017) & 0.559 (0.014)\\
 M4& 0.659 (0.021) & 0.520 (0.013) & 0.454 (0.009)\\
\end{tabular}
	\caption{\scriptsize The diameter of the credible interval $4\sigma(x)$.}
\end{subtable}
\newline
\begin{subtable}[c]{\textwidth}
\centering
\begin{tabular}{c|c|c|c}
(n,m)&$ (2000,10)$&  $(5000,20)$ & $(10000,50)$\\ \hline
 BM& 0.95 & 0.96 & 1.00\\
 M1& 0.96 & 0.97 & 1.00 \\
 M2& 0.98 & 1.00 & 0.98\\
 M3& 1.00   & 1.00 & 1.00\\
 M4& 1.00 & 1.00 & 1.00\\
\end{tabular}
\caption{\scriptsize Proportion of experiments where the true functional $ f_0(x)$ was inside the credible interval.}
\end{subtable}
\caption{\scriptsize Deterministic (oracle) rescaling of the Mat\'ern process prior ($\alpha=3$) at the test covariate $x=1/2$}	\label{table: pointwise:Matern:nonadapt:x=1/2}
\end{table}

Next we investigate the pointwise behaviour of the empirical Bayes approach. We consider the same test design points $x=173/400$ and $x=1/2$ as for the oracle rescaling above, see Tables \ref{table: pointwise:Matern:EB:x=173/400} and \ref{table: pointwise:Matern:EB:x=1/2}, respectively. One can observe that method M1 becomes over-confident. It produces overly small credible intervals and the frequentist coverage of the credible intervals is very low. Method M3 provides overly large credible intervals, while these intervals often do not cover the true functional value, resulting in a highly sub-optimal procedure. Finally, methods M2 and M4 are also more conservative than the benchmark non-distributed approach, but much less than M3. Furthermore, they provide reliable uncertainty statements, unlike M1 and M3.

\begin{table}[!h]
\begin{subtable}[c]{\textwidth}
\centering
\begin{tabular}{c|c|c|c}
(n,m)&$ (2000,10)$&  $(5000,20)$ & $(10000,50)$\\ \hline
 BM& 0.309 (0.021) & 0.229 (0.014) & 0.182 (0.009)\\
 M1&  0.236 (0.045) & 0.171 (0.020) & 0.129 (0.013) \\
 M2& 0.306 (0.047) & 0.266  (0.042) & 0.287 (0.013)\\
 M3& 0.370 (0.036)  & 0.315 (0.014) & 0.355 (0.009)\\
 M4& 0.307 (0.046) & 0.261 (0.015) & 0.289 (0.013)\\
\end{tabular}
	\caption{\scriptsize The diameter of the credible interval $4\sigma(x)$.}
\end{subtable}
\newline
\begin{subtable}[c]{\textwidth}
\centering
\begin{tabular}{c|c|c|c}
(n,m)&$ (2000,10)$&  $(5000,20)$ & $(10000,50)$\\ \hline
 BM& 0.99 & 1.00& 1.00\\
 M1& 0.52 & 0.30 & 0.05 \\
 M2& 0.97 & 1.00 & 0.90\\
 M3& 0.63   & 0.25 & 0.00\\
 M4& 0.98 & 1.00 & 0.95\\
\end{tabular}
\caption{\scriptsize Proportion of experiments where the true functional $ f_0(x)$ was inside the credible interval.}
\end{subtable}
\caption{\scriptsize  Empirical Bayes rescaling of the Mat\'ern process prior ($\alpha=3$) at the test covariate $x=173/400$}	\label{table: pointwise:Matern:EB:x=173/400}
\end{table}

\begin{table}[!h]
\begin{subtable}[c]{\textwidth}
\centering
\begin{tabular}{c|c|c|c}
(n,m)&$ (2000,10)$&  $(5000,20)$ & $(10000,50)$\\ \hline
 BM& 0.309 (0.022) & 0.229 (0.014) & 0.182 (0.009)\\
 M1&  0.235 (0.045) & 0.172 (0.020) & 0.127 (0.009)\\
 M2& 0.382 (0.146) & 0.284 (0.071) & 0.348 (0.095)\\
 M3& 0.371 (0.050)  & 0.313 (0.011) & 0.357 (0.010)\\
 M4& 0.337 (0.076) & 0.265 (0.011) & 0.315 (0.055)\\
\end{tabular}
	\caption{\scriptsize The diameter of the credible interval $4\sigma(x)$.}
\end{subtable}
\newline
\begin{subtable}[c]{\textwidth}
\centering
\begin{tabular}{c|c|c|c}
(n,m)&$ (2000,10)$&  $(5000,20)$ & $(10000,50)$\\ \hline
 BM& 0.93 & 1.00 & 1.00\\
 M1& 0.76 & 0.65 & 0.20 \\
 M2& 0.88 & 0.95 & 1.00\\
 M3& 0.91 & 0.85 & 0.65\\
 M4& 0.97 & 1.00 & 1.00\\
\end{tabular}
\caption{\scriptsize Proportion of experiments where the true functional $ f_0(x)$ was inside the credible interval.}
\end{subtable}
\caption{\scriptsize  Empirical Bayes  rescaling of the Mat\'ern process prior ($\alpha=3$) at the test covariate $x=1/2$}
\label{table: pointwise:Matern:EB:x=1/2}
\end{table}

Finally, we investigate the pointwise behaviour of the hierarchical Bayes approach at the above selected test design points $x=173/400$ and $x=1/2$, see Tables \ref{table: pointwise:Matern:HB:x=173/400} and \ref{table: pointwise:Matern:HB:x=1/2}, respectively. One can observe that the credible intervals provide more reliable frequentist uncertainty quantification, especially for method M3. However, this approach produces the largest credible intervals, providing a very conservative approach. We also note that methods M2 and M4 provide overly large intervals compared to the original posterior.

\begin{table}[!h]
\begin{subtable}[c]{\textwidth}
\centering
\begin{tabular}{c|c|c|c}
(n,m)&$ (2000,10)$&  $(5000,20)$ & $(10000,50)$\\ \hline
 BM& 0.376 (0.009) & 0.228 (0.011) & 0.179 (0.007)\\
 M1&  0.387 (0.011) & 0.173 (0.013) & 0.126 (0.002) \\
 M2& 0.387 (0.010) & 0.265  (0.003) & 0.285 (0.001)\\
 M3& 0.421 (0.012)  & 0.339 (0.004) & 0.423 (0.002)\\
 M4& 0.391 (0.010) & 0.273 (0.003) & 0.294 (0.001)\\
\end{tabular}
	\caption{\scriptsize The diameter of the credible interval $4\sigma(x)$.}
\end{subtable}
\newline
\begin{subtable}[c]{\textwidth}
\centering
\begin{tabular}{c|c|c|c}
(n,m)&$ (2000,10)$&  $(5000,20)$ & $(10000,50)$\\ \hline
 BM& 0.98 & 1.00 & 1.00\\
 M1& 0.98 & 0.45 & 0.10 \\
 M2& 0.98 & 1.00 & 0.80\\
 M3& 0.99   & 1.00 & 1.00\\
 M4& 0.98 & 1.00 & 1.00\\
\end{tabular}
\caption{\scriptsize Proportion of experiments where the true functional $ f_0(x)$ was inside the credible interval.}
\end{subtable}
\caption{\scriptsize  Hierarchical Bayes rescaling of the Mat\'ern process prior ($\alpha=3$) at the test covariate $x=173/400.$}	\label{table: pointwise:Matern:HB:x=173/400}
\end{table}

\begin{table}[!h]
\begin{subtable}[c]{\textwidth}
\centering
\begin{tabular}{c|c|c|c}
(n,m)&$ (2000,10)$&  $(5000,20)$ & $(10000,50)$\\ \hline
 BM& 0.375 (0.010) & 0.228 (0.011) & 0.179 (0.007)\\
 M1&  0.386 (0.011) & 0.173 (0.012) & 0.126 (0.003)\\
 M2& 0.663 (0.032) & 0.341 (0.022) & 0.317 (0.004)\\
 M3& 0.706 (0.024)  & 0.400 (0.014) & 0.437 (0.007)\\
 M4& 0.659 (0.021) & 0.339 (0.009) & 0.320 (0.004)\\
\end{tabular}
	\caption{\scriptsize The diameter of the credible interval $4\sigma(x)$.}
\end{subtable}
\newline
\begin{subtable}[c]{\textwidth}
\centering
\begin{tabular}{c|c|c|c}
(n,m)&$ (2000,10)$&  $(5000,20)$ & $(10000,50)$\\ \hline
 BM& 0.95 & 1.00 & 1.00\\
 M1& 0.96 & 1.00 & 0.80 \\
 M2& 0.98 & 1.00 & 1.00\\
 M3& 1.00   & 1.00 & 1.00\\
 M4& 1.00 & 1.00 & 1.00\\
\end{tabular}
\caption{\scriptsize Proportion of experiments where the true functional $ f_0(x)$ was inside the credible interval.}
\end{subtable}
\caption{\scriptsize  Hierarchical Bayes rescaling of the Mat\'ern process prior ($\alpha=3$) at the test covariate $x=1/2$.}	\label{table: pointwise:Matern:HB:x=1/2}
\end{table}

\subsection{Spatial adaptation}\label{sec:spatial}
In this subsection we investigate the spatial adaptation properties of the distributed GP methods. We consider the true function $f_0$  of the form
\begin{equation}\label{f0:spatial}
\begin{split}
f_0(x)=\big(\sum_{j=0}^\infty a_j \psi_j(x)\big)1_{x\in[0,0.5]}+\big(\sum_{j=0}^\infty b_j \psi_j(x)+c\big)1_{x\in(0.5,1]},\qquad\text{where}\\
c= \sum_{j=0}^\infty (a_j-b_j) \psi_j(0.5),\quad a_j=2j^{-1.3}\cos(2^{j/2}) ,\quad b_j= 2j^{-3.5}\sin(j/2).
\end{split}
\end{equation}
Note that the constant $c$ ensures that the function is continuous. Furthermore, observe that in the first half of the unit interval $[0,0.5]$ the function is less regular than in the second half $[0.5,1]$, see Figure \ref{fig:EB:spatial:adapt}. As prior we use the rescaled Gaussian Process with Mat\'ern covariance kernel and regularity parameter $\alpha=3$. The rescaling parameter is chosen via the MMLE empirical Bayes method. We report the $L_2$ estimation error of the posterior mean and the size of the $L_2$-credible sets for both parts of the signal in Tables \ref{table:error:spatial:first} and \ref{table:error:spatial:second}, and the size and coverage of the credible intervals at the points $x=1/3$ and $x=2/3$ in Tables \ref{table:pointwise:spatial:x=1/3} and  \ref{table:pointwise:spatial:x=2/3}, respectively. One can observe from the figures and the corresponding tables, that the non-distributed and randomly distributed methods do not pick up the different local behaviour of $f_0$ on the two sub-intervals, i.e. the posterior mean has similar smoothness and the credible bands are of similar size on the first and second half of the unit interval. In contrast to this Methods 2 and 4 adapt to the different local regularities. In the first half of the interval the posterior mean is rougher and the bands are wider, while in the second half the estimator is smoother and the bands are narrower. Method 3 is sub-optimal, as we have seen in the previous examples as well. Hence we can conclude that spatially distributed methods, beside substantially reducing the computational burden, have the additional benefit of locally adapting to the functional parameter.

 \begin{figure*}[!t]%
\centering
\includegraphics[width=\textwidth]{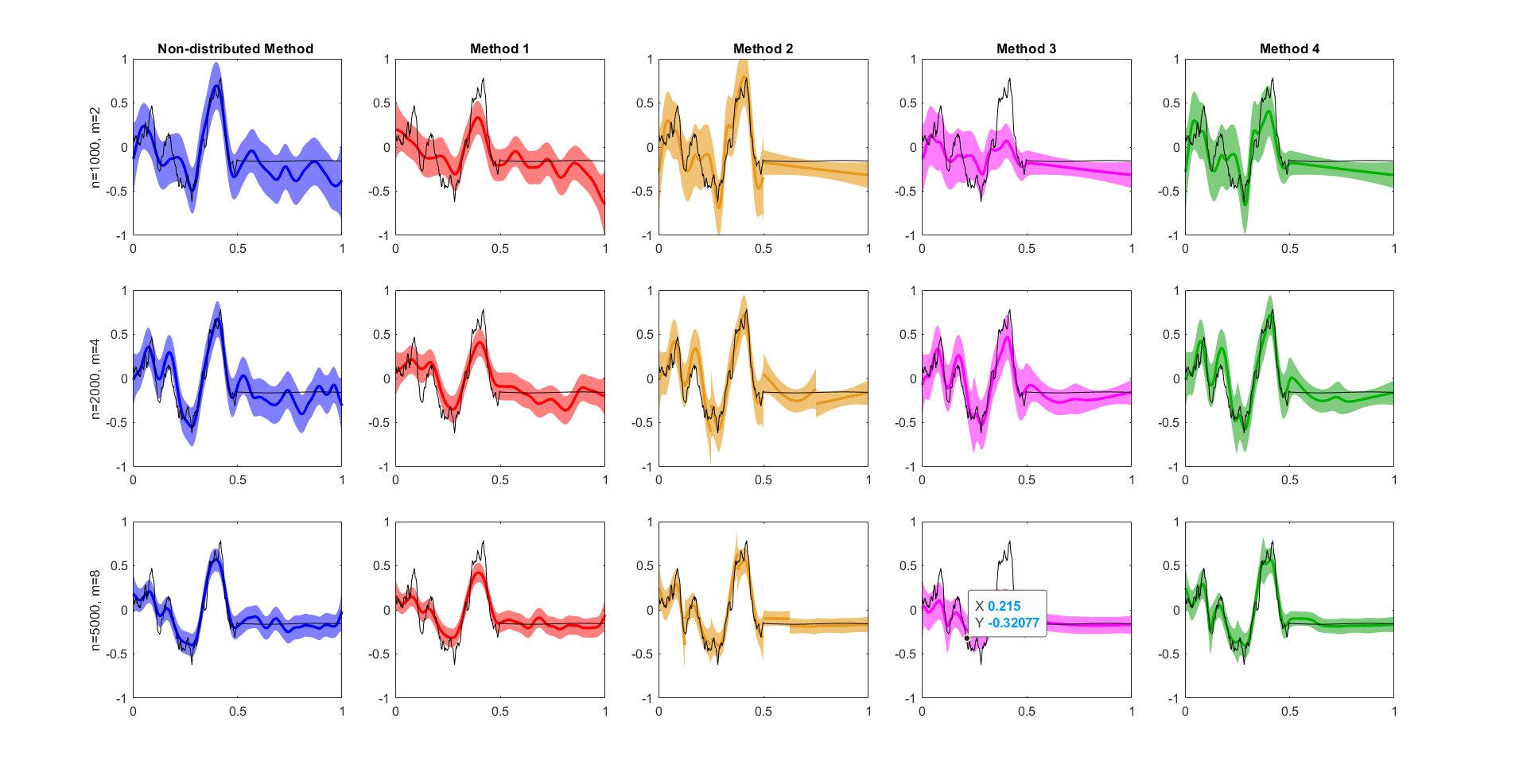}
	\caption{\scriptsize Data-based rescaled (MMLE) squared exponential Gaussian process prior. Benchmark and distributed GP posteriors. True function $f_0$ given in \eqref{f0:spatial} is drawn in black. Posterior means drawn by solid lines, surrounded by $95\%$ pointwise credible sets,  shaded between two dotted lines. The five columns correspond (left to right) to the non-distributed method, the distributed method with random partitioning, and the distributed methods with spatial partitioning without smoothing, with inverse variance weights and with exponential weights. From top to bottom the sample sizes are $n=1000,2000, 5000$ and the numbers of experts are $m=2,4,8$.}
	\label{fig:EB:spatial:adapt}
\end{figure*}

\begin{table}[h]
\begin{subtable}[c]{\textwidth}
\centering
\begin{tabular}{c|c|c|c}
(n,m)&$ (1000,2)$&  $(2000,4)$ & $(5000,8)$\\ \hline
 BM& 0.175 (0.028) & 0.143 (0.019) & 0.107 (0.013)\\
 M1& 0.201  (0.048) & 0.178 (0.036) & 0.135 (0.026)\\
 M2&  0.167 (0.025) & 0.145 (0.019) & 0.107 (0.013)\\
 M3& 0.263 (0.027) & 0.251 (0.029) & 0.248 (0.016)\\
 M4& 0.189 (0.024)  & 0.143 (0.017) & 0.104 (0.013)\\
\end{tabular}
	\caption{\scriptsize Average $L_2$-distance between $f_0$ and posterior mean on the interval $[0,0.5]$.}
\end{subtable}
\newline
\begin{subtable}[c]{\textwidth}
\centering
\begin{tabular}{c|c|c|c}
(n,m)&$ (1000,2)$&  $(2000,4)$ & $(5000,8)$\\ \hline
 BM& 0.119 (0.010) & 0.095 (0.007) & 0.071 (0.004)\\
 M1& 0.106 (0.020) & 0.081 (0.011) & 0.060 (0.006) \\
 M2& 0.144 (0.016) & 0.112 (0.015) &  0.087 (0.006)\\
 M3& 0.128 (0.008)  & 0.111 (0.016) &  0.094 (0.010))\\
 M4& 0.135 (0.014) & 0.108 (0.015) & 0.085 (0.006)\\
\end{tabular}
	\caption{\scriptsize Average radius of the $L_2$-credible ball on the interval $[0,0.5]$.}
\end{subtable}
	\caption{\scriptsize Empirical Bayes rescaling of the Mat\'ern Gaussian process prior with regularity parameter $\alpha=3$. The true regression function is given in \eqref{f0:spatial} and we consider the first half of the unit interval  $[0,0.5]$.}
	\label{table:error:spatial:first}
\end{table}

\begin{table}[h]
\begin{subtable}[c]{\textwidth}
\centering
\begin{tabular}{c|c|c|c}
(n,m)&$ (1000,2)$&  $(2000,4)$ & $(5000,8)$\\ \hline
 BM& 0.104 (0.032) & 0.082 (0.021) & 0.060 (0.009)\\
 M1& 0.098 (0.032) & 0.074 (0.019) & 0.055 (0.010)\\
 M2&  0.047 (0.028) & 0.051 (0.023) & 0.047 (0.012)\\
 M3& 0.047 (0.028) & 0.038 (0.018) & 0.030 (0.011)\\
 M4& 0.047  (0.028)  & 0.047 (0.021) & 0.043 (0.010)\\
\end{tabular}
	\caption{\scriptsize Average $L_2$-distance between $ f_0$ and posterior mean, second part.}
\end{subtable}
\newline
\begin{subtable}[c]{\textwidth}
\centering
\begin{tabular}{c|c|c|c}
(n,m)&$ (1000,2)$&  $(2000,4)$ & $(5000,8)$\\ \hline
 BM& 0.119 (0.010) & 0.095 (0.007) & 0.071 (0.004)\\
 M1& 0.106 (0.020) & 0.081 (0.011) & 0.061 (0.006) \\
 M2& 0.057 (0.004) & 0.053 (0.006) &  0.045 (0.004)\\
 M3& 0.057 (0.004) & 0.056 (0.004) &  0.050 (0.001))\\
 M4& 0.057 (0.004) & 0.053 (0.005)  & 0.044 (0.003)\\
\end{tabular}
	\caption{\scriptsize Average radius credible ball, second part}
\end{subtable}
	\caption{\scriptsize Empirical Bayes rescaling of the Mat\'ern Gaussian process prior with regularity parameter $\alpha=3$. The true regression function is given in \eqref{f0:spatial} and we consider the second half of the unit interval  $[0.5,1]$.}
	\label{table:error:spatial:second}
\end{table}

\begin{table}[!h]
\begin{subtable}[c]{\textwidth}
\centering
\begin{tabular}{c|c|c|c}
(n,m)&$ (1000,2)$&  $(2000,4)$ & $(5000,8)$\\ \hline
 BM& 0.462 (0.047) & 0.375 (0.033) & 0.280 (0.019)\\
 M1&  0.410 (0.085) & 0.317 (0.049) & 0.240 (0.023) \\
 M2& 0.556 (0.075) & 0.444 (0.044) & 0.300 (0.035)\\
 M3& 0.465 (0.030)  & 0.447 (0.040) & 0.348 (0.040)\\
 M4& 0.554 (0.072) & 0.444 (0.044) & 0.300 (0.035)\\
\end{tabular}
	\caption{\scriptsize The diameter of the credible interval $4\sigma(x)$.}
\end{subtable}
\newline
\begin{subtable}[c]{\textwidth}
\centering
\begin{tabular}{c|c|c|c}
(n,m)&$ (1000,2)$&  $(2000,4)$ & $(5000,8)$\\ \hline
 BM& 0.91 & 0.86 & 0.65\\
 M1& 0.88 & 0.81 & 0.55 \\
 M2& 0.86 & 0.89 & 0.70\\
 M3& 0.77 & 0.72 & 0.45\\
 M4& 0.87 & 0.89 & 0.70\\
\end{tabular}
\caption{\scriptsize Proportion of experiments where the true functional $ f_0(x)$ was inside the credible interval.}
\end{subtable}
\caption{\scriptsize  Empirical Bayes rescaling of the Mat\'ern process prior (with regularity parameter $\alpha=3$) at $x=1/3$ for true function $f_0$ given in \eqref{f0:spatial}.}	\label{table:pointwise:spatial:x=1/3}
\end{table}

\begin{table}[!h]
\begin{subtable}[c]{\textwidth}
\centering
\begin{tabular}{c|c|c|c}
(n,m)&$ (1000,2)$&  $(2000,4)$ & $(5000,8)$\\ \hline
 BM& 0.466 (0.045) & 0.374 (0.033) & 0.280 (0.015)\\
 M1&  0.412 (0.086) & 0.315 (0.048) & 0.236 (0.022)\\
 M2& 0.199 (0.013) & 0.190 (0.012) & 0.172 (0.023)\\
 M3& 0.199 (0.013)  & 0.207 (0.014) & 0.197 (0.013)\\
 M4&  0.199 (0.013)  & 0.190 (0.012) & 0.172 (0.023)\\
\end{tabular}
	\caption{\scriptsize The diameter of the credible interval $4\sigma(x)$.}
\end{subtable}
\newline
\begin{subtable}[c]{\textwidth}
\centering
\begin{tabular}{c|c|c|c}
(n,m)&$ (1000,2)$&  $(2000,4)$ & $(5000,8)$\\ \hline
 BM& 0.98 & 1.00 & 1.00\\
 M1& 0.96 & 0.98 & 1.00 \\
 M2& 0.96 & 0.96 & 0.95\\
 M3& 0.97  & 1.00 & 1.00\\
 M4& 0.97 & 0.97 & 0.95\\
\end{tabular}
\caption{\scriptsize Proportion of experiments where the true functional $ f_0(x)$ was inside the credible interval.}
\end{subtable}
\caption{\scriptsize  Empirical Bayes rescaling of the Mat\'ern process prior (with regularity parameter $\alpha=3$) at $x=2/3$ for true function $f_0$ given in \eqref{f0:spatial}.}	\label{table:pointwise:spatial:x=2/3}
\end{table}

 \subsection{Squared exponential kernel}\label{sec:SE:example}
The squared exponential process is the centered Gaussian process with covariance kernel $\E(G_sG_t)=e^{-|s-t|^2/2}$. Its sample paths are infinitely smooth, but with appropriately changed length scale the process is an accurate prior also for finitely smooth functions $f_0$ (see \cite{vandervaart2009}). The optimal length scale for a $\beta$ regular function $f_0$ is $\tau_n=n^{1/(1+2\beta)}$. However, since the regularity is typically not available, we consider data driven choices of the rescaling parameter. In our study we generated data from the regression function $f_0(x)=\sum_{j=0}^{\infty}f_{0,j}\psi_j(x)$ where $\psi_j(x)=\sqrt{2}\cos(\pi(j-1/2)x)$ is the cosine basis and the corresponding coefficients are taken as $2.5 j^{-1/2-\beta}\sin(2j)$ with $\beta=3/2$ for $j\geq 3$ and $f_{0,j}=0$ for $j\leq 2$.

We considered pairs of sample sizes and numbers of machines $(n,m)$ equal to  $(1000,10)$, $(3000,20)$ and $(5000,20)$. For each setting we report the average performance over 100 independent data sets, except in case $n\geq 5000$, where we considered only 20 repetitions, due to the overly slow non-distributed approach.

For visualizing our numerical findings, we plot the posterior means and the pointwise $95\%$-credible bands for the empirical and hierarchical Bayes procedures in Figures~\ref{fig:SE:EB} and~\ref{fig:SE:HB}, respectively. We start by discussing the empirical Bayes approach. We report the $L_2$ estimation error of the posterior mean, the size and coverage of the $L_2$-credible sets, the size and coverage of the pointwise credible intervals at $x=1/3$ and $x=2/3$, and the runtimes in Tables \ref{table: radiusSE_EB}-\ref{table: timeSE_EB}.

In all investigated settings the consensus Monte-Carlo method (M1) performed substantially worse than the true posterior (BM). The corresponding posterior mean is overly smooth and the credible bands are very narrow, providing an overly confident, unreliable uncertainty statement. This suboptimal performance was expected from the fact that this method of data splitting cannot adapt to the unknown smoothness.  The standard spatially distributed approach (M2) provided good estimation and reliable uncertainty quantification. However, the aggregated posterior contains jumps, which are less appealing in practice. The aggregation method (M3) proposed in the literature performed also sub-optimally, providing overly smooth estimators and large credible bands which at the same time still did not provide good frequentist coverage. Our approach with exponential weights (M4), however, outperformed all the other distributed methods. First, it provided better estimation with the posterior mean. Second, although the corresponding credible sets are larger than for the true posterior, they are of similar size as for M2. This, however, resulted in a substantially higher frequentist coverage for this method than for M1 and M3 and even improving in certain cases the coverage of the non-distributed benchmark.

 The run times of the distributed methods were substantially shorter than for the standard, non-distributed algorithm.  We note again, that these run times are for the sequentially executed computations and could be substantially shortened by parallelizing the local machines. For instance, in the last scenario $(5000,20)$ our approach (M4) would take a fraction of a second (around 0.5s) as compared to the average runtime of 745s of the non-distributed algorithm, while providing  comparable reliability and accuracy.

 \begin{figure*}[!t]%
\centering
\includegraphics[width=\textwidth]{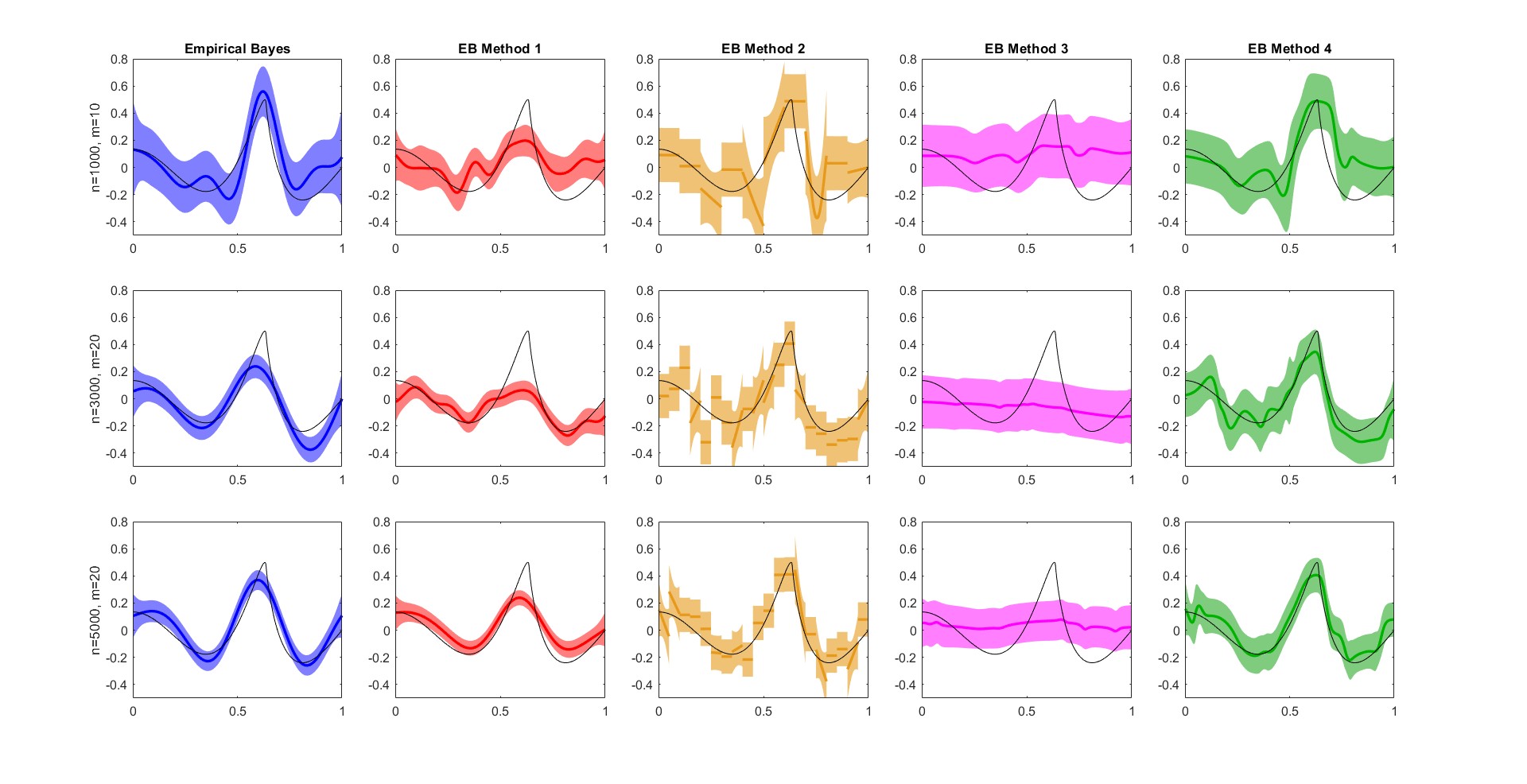}
	\caption{\scriptsize Data-based rescaled (MMLE) squared exponential Gaussian process prior. Benchmark and distributed GP posteriors. True function $ f_0(x)=\sum_{j=3}^{\infty}2.5 j^{-2}\sin(2j)\psi_j(x)$ drawn in black. Posterior means drawn by solid lines, surrounded by $95\%$ pointwise credible sets,  shaded between two dotted lines. The five columns correspond (left to right) to the non-distributed method, the distributed method with random partitioning, and the distributed methods with spatial partitioning without smoothing, with inverse variance weights and with exponential weights. From top to bottom the sample sizes are $n=1000,5000,10000$ and the numbers of experts are $m=5,10,100$.}
	\label{fig:SE:EB}
\end{figure*}

 \begin{table}[!h]
\begin{subtable}[c]{\textwidth}
\centering
\begin{tabular}{c|c|c|c}
(n,m)&$ (1000,10)$&  $(3000,20)$ & $(5000,20)$\\ \hline
 BM& 0.106 (0.026) & 0.069 (0.011) & 0.057 (0.009)\\
 M1& 0.147  (0.024) & 0.129 (0.021) & 0.119 (0.016)\\
 M2&  0.135 (0.024) & 0.099 (0.014) & 0.077 (0.009)\\
 M3& 0.169 (0.012) & 0.170 (0.006) & 0.164 (0.006)\\
 M4& 0.107 (0.021)  & 0.070 (0.012) & 0.057 (0.009)\\
\end{tabular}
\caption{\scriptsize Average $L_2$-distance between $ f_0$ and posterior mean.}
	\label{table: errorSE_EB}
\end{subtable}
\newline
\begin{subtable}[c]{\textwidth}
\centering
\begin{tabular}{c|c|c|c}
(n,m)&$ (1000,10)$&  $(3000,20)$ & $(5000,20)$\\ \hline
 BM& 0.162 (0.023) & 0.107 (0.007) & 0.088 (0.006)\\
 M1& 0.116 (0.018) & 0.071 (0.008) & 0.056 (0.004) \\
 M2& 0.231 (0.013) & 0.179 (0.006) &  0.139 (0.005)\\
 M3& 0.231 (0.04)  & 0191 (0.002) &  0.152 (0.001))\\
 M4& 0.220 (0.010) & 0.174 (0.005) & 0.135 (0.004)\\
\end{tabular}
	\caption{\scriptsize Average size of the credible balls.}
\end{subtable}
	\caption{\scriptsize Empirical Bayes  (MMLE) squared exponential Gaussian process prior. BM: Benchmark, Non-distributed method. M1: Random partitioning, M2: Spatial partitioning, M3: Spatial partitioning with inverse variance weights, M4: Spatial partitioning with exponential weights. We report the average  (and standard deviations) out of 100 repetitions (except of the last column when the number are based on 20 repetitions)}\label{table: radiusSE_EB}
\end{table}

\begin{table}[!h]
\centering
\begin{tabular}{c|c|c|c}
(n,m)&$ (1000,10)$&  $(3000,20)$ & $(5000,20)$\\ \hline
 BM& 0.94 & 1.00 & 1.00\\
 M1& 0.22 & 0.02 & 0.00 \\
 M2& 1.00 & 1.00 & 1.00\\
 M3& 1.00  & 1.00 & 0.05\\
 M4& 1.00 & 1.00 & 1.00\\
\end{tabular}
	\caption{\scriptsize Empirical Bayes (MMLE) squared exponential Gaussian process prior. Proportion of runs when the $L_2$-credible ball contains $f_0$.}
	\label{table: coverage:SE_EB}
\end{table}

\begin{table}[!h]
\begin{subtable}[c]{\textwidth}
\centering
\begin{tabular}{c|c|c|c}
(n,m)&$ (1000,10)$&  $(3000,20)$ & $(5000,20)$\\ \hline
 BM& 0.295 (0.045) & 0.196 (0.014) & 0.163 (0.012)\\
 M1&  0.204 (0.035) & 0.124 (0.017) & 0.098 (0.008)\\
 M2& 0.427 (0.052) & 0.335 (0.010) & 0.257 (0.011)\\
 M3& 0.450 (0.016)  & 0.371 (0.009) & 0.295 (0.008)\\
 M4& 0.423 (0.038) & 0.334 (0.013) & 0.259 (0.012)\\
\end{tabular}
	\caption{\scriptsize The diameter of the credible interval $4\sigma(x)$.}
\end{subtable}
\newline
\begin{subtable}[c]{\textwidth}
\centering
\begin{tabular}{c|c|c|c}
(n,m)&$ (1000,10)$&  $(3000,20)$ & $(5000,20)$\\ \hline
 BM& 0.87 & 0.97 & 0.90\\
 M1& 0.28 & 0.14 & 0.00 \\
 M2& 0.95 & 0.98 & 0.90\\
 M3& 0.96   & 0.93 & 0.65\\
 M4& 0.99 & 1.00 & 0.95\\
\end{tabular}
\caption{\scriptsize Proportion of experiments where the true functional $ f_0(x)$ was inside the credible interval.}
\end{subtable}
\caption{\scriptsize  Empirical Bayes rescaling of the GP with squared exponential covariance kernel at $x=1/3$ for true function $ f_0(x)=\sum_{j=3}^{\infty}2.5 j^{-2}\sin(2j)\psi_j(x)$.}	\label{table:pointwise:SE:EB:x=1/3}
\end{table}

\begin{table}[!h]
\begin{subtable}[c]{\textwidth}
\centering
\begin{tabular}{c|c|c|c}
(n,m)&$ (1000,10)$&  $(3000,20)$ & $(5000,20)$\\ \hline
 BM& 0.295 (0.045) & 0.195 (0.014) & 0.163 (0.012)\\
 M1&  0.203 (0.037) & 0.124 (0.017) & 0.098 (0.008)\\
 M2& 0.454 (0.060) & 0.356 (0.033) & 0.285 (0.029)\\
 M3& 0.458 (0.017)  & 0.356 (0.032) & 0.309 (0.010)\\
 M4& 0.444 (0.044) & 0.350 (0.027) & 0.283 (0.026)\\
\end{tabular}
	\caption{\scriptsize The diameter of the credible interval $4\sigma(x)$.}
\end{subtable}
\newline
\begin{subtable}[c]{\textwidth}
\centering
\begin{tabular}{c|c|c|c}
(n,m)&$ (1000,10)$&  $(3000,20)$ & $(5000,20)$\\ \hline
 BM& 0.88 & 0.91 & 0.90\\
 M1& 0.33 & 0.10 & 0.10 \\
 M2& 0.89 & 0.92 & 0.85\\
 M3& 0.80 & 0.61 & 0.20\\
 M4& 0.95 & 0.97 & 0.90\\
\end{tabular}
\caption{\scriptsize Proportion of experiments where the true functional $ f_0(x)$ was inside the credible interval.}
\end{subtable}
\caption{\scriptsize  Empirical Bayes rescaling of the GP with squared exponential covariance kernel at $x=2/3$ for true function $ f_0(x)=\sum_{j=3}^{\infty}2.5 j^{-2}\sin(2j)\psi_j(x)$.}	\label{table:pointwise:SE:EB:x=2/3}
\end{table}

\begin{table}[!h]
\centering
	\begin{tabular}{c|c|c|c}
(n,m)&$ (1000,10)$&  $(3000,20)$ & $(5000,20)$\\ \hline
Benchmark & 15.27s (3.48s) & 194.8s (24.4s) & 744.5s (107.5s) \\
 Random & 2.37s  (1.39s) & 6.4s (2.0s) & 11.3s (2.5s)\\
Spatial & 2.36s (1.60s) & 6.0s (1.5s) & 10.7s (2.4s)\\
\end{tabular}
	
	\caption{\scriptsize Data-based rescaled (MMLE) squared exponential Gaussian process prior. Average run time for the computation of the posterior. Benchmark: Non-distributed method. Method 1: Random partitioning, Method 2: Spatial partitioning.}
\label{table: timeSE_EB}
\end{table}

Finally, we investigate the hierarchical Bayes rescaling of the squared exponential Gaussian Process prior. We report the $L_2$ estimation error of the posterior mean, the size and coverage of the $L_2$-credible sets, the size and coverage of the pointwise credible intervals at $x=1/3$ and $x=2/3$, and the runtime in Tables \ref{table: HB:SE:error}-\ref{table: timeSE_HB}. One can note, that similarly to the empirical Bayes procedure, the random distribution of the data (M1) performs sub-optimally, providing overly smooth estimators and too narrow, overconfident credible bands with unreliable uncertainty quantification. The spatially distributed methods provide good estimators of the true function, especially M4, even improving occasionally the non-distributed approach. However, they provide larger credible sets, especially method M3 is very conservative. In contrast to the empirical Bayes approach, however, the uncertainty statements resulting in from this approach are more reliable and comparable with the other spatially distributed methods and the original posterior. Finally, the run times of the distributed methods were substantially faster than for the non-distributed approach and could be even improved by running them in parallel, reducing it with an additional factor of $m=20$. We also note that the empirical Bayes approach, as expected, was faster than the hierarchical Bayes method.

 \begin{figure*}[!t]%
\centering
\includegraphics[width=\textwidth]{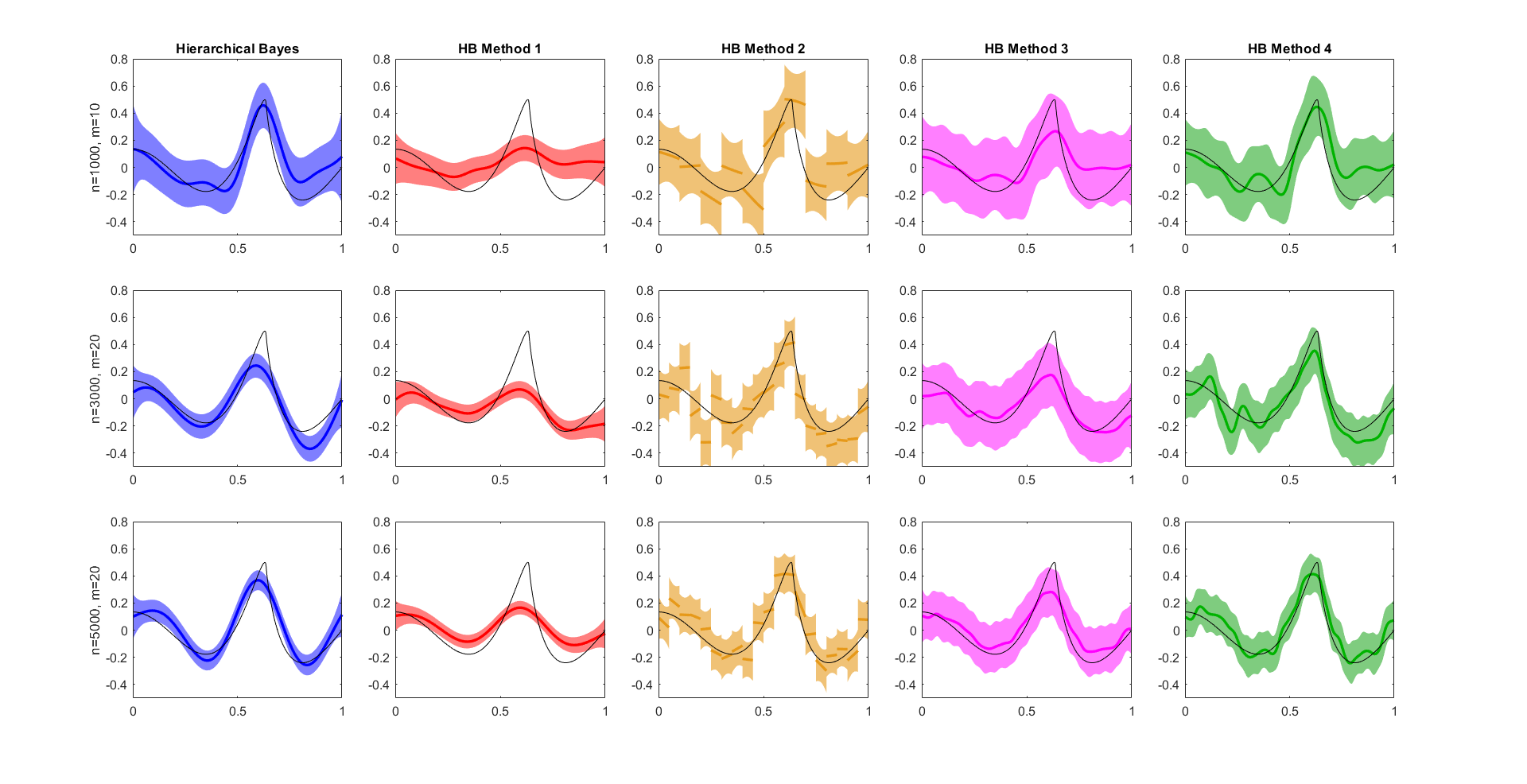}
	\caption{\scriptsize Hierarchical Bayes squared exponential Gaussian process prior. Benchmark and distributed GP posteriors. True function $ f_0(x)=\sum_{j=3}^{\infty}2.5 j^{-2}\sin(2j)\psi_j(x)$ drawn in black. Posterior means drawn by solid lines, surrounded by $95\%$ pointwise credible sets,  shaded between two dotted lines. The five columns correspond (left to right) to the non-distributed method, the distributed method with random partitioning, and the distributed methods with spatial partitioning without smoothing, with inverse variance weights and with exponential weights. From top to bottom the sample sizes are $n=1000,3000,5000$ and the numbers of experts are $m=10,20,20$.}
	\label{fig:SE:HB}
\end{figure*}

 \begin{table}[!h]
\begin{subtable}[c]{\textwidth}
\centering
\begin{tabular}{c|c|c|c}
(n,m)&$ (1000,10)$&  $(3000,20)$ & $(5000,20)$\\ \hline
 BM& 0.108 (0.026) & 0.068 (0.011) & 0.056 (0.009)\\
 M1& 0.151  (0.016) & 0.137 (0.013) & 0.126 (0.011)\\
 M2&  0.118 (0.020) & 0.086 (0.013) & 0.070 (0.007)\\
 M3& 0.108 (0.016) & 0.085 (0.010) & 0.071 (0.006)\\
 M4& 0.094 (0.021)  & 0.064 (0.012) & 0.057 (0.010)\\
\end{tabular}
\caption{\scriptsize Average $L_2$-distance between $ f_0$ and posterior mean.}
	\label{table: errorSE_HB}
\end{subtable}
\newline
\begin{subtable}[c]{\textwidth}
\centering
\begin{tabular}{c|c|c|c}
(n,m)&$ (1000,10)$&  $(3000,20)$ & $(5000,20)$\\ \hline
 BM& 0.156 (0.019) & 0.105 (0.005) & 0.087 (0.004)\\
 M1& 0.113 (0.005) & 0.068 (0.003) & 0.055 (0.002) \\
 M2& 0.222 (0.002) & 0.173 (0.001) &  0.137 (0.001)\\
 M3& 0.274 (0.001)  & 0229 (0.001) &  0.177 (0.001))\\
 M4& 0.229 (0.002) & 0.179 (0.001) & 0.141 (0.001)\\
\end{tabular}
	\caption{\scriptsize Average size of the credible balls.}
	\label{table: radiusSE_HB}
\end{subtable}
	\caption{\scriptsize    Hierarchical Bayes rescaling of the GP with squared exponential covariance kernel. BM: Benchmark, Non-distributed method. M1: Random partitioning, M2: Spatial partitioning, M3: Spatial partitioning with inverse variance weights, M4: Spatial partitioning with exponential weights. We report the average  (and standard deviations) out of 100 repetitions (except of the last column when the number are based on 20 repetitions)}\label{table: HB:SE:error}
\end{table}

\begin{table}[!h]
\centering
\begin{tabular}{c|c|c|c}
(n,m)&$ (1000,10)$&  $(3000,20)$ & $(5000,20)$\\ \hline
 BM& 0.91 & 1.00 & 1.00\\
 M1& 0.05 & 0.00 & 0.00 \\
 M2& 1.00 & 1.00 & 1.00\\
 M3& 1.00 & 1.00 & 1.00\\
 M4& 1.00 & 1.00 & 1.00\\
\end{tabular}
	\caption{\scriptsize Hierarchical Bayes rescaling of the GP with squared exponential covariance kernel. Proportion of runs when the $L_2$-credible ball contains $f_0$.}
	\label{table: coverage:SE_HB}
\end{table}

\begin{table}[!h]
\begin{subtable}[c]{\textwidth}
\centering
\begin{tabular}{c|c|c|c}
(n,m)&$ (1000,10)$&  $(3000,20)$ & $(5000,20)$\\ \hline
 BM& 0.282 (0.039) & 0.193 (0.011) & 0.160 (0.009)\\
 M1&  0.193 (0.012) & 0.116 (0.006) & 0.095 (0.003)\\
 M2& 0.417 (0.007) & 0.332 (0.003) & 0.259 (0.002)\\
 M3& 0.535 (0.011)  & 0.455 (0.005) & 0.348 (0.004)\\
 M4& 0.435 (0.008) & 0.345 (0.003) & 0.268 (0.002)\\
\end{tabular}
	\caption{\scriptsize The diameter of the credible interval $4\sigma(x)$.}
\end{subtable}
\newline
\begin{subtable}[c]{\textwidth}
\centering
\begin{tabular}{c|c|c|c}
(n,m)&$ (1000,10)$&  $(3000,20)$ & $(5000,20)$\\ \hline
 BM& 0.85 & 0.99 & 0.90\\
 M1& 0.22 & 0.03 & 0.00 \\
 M2& 0.96 & 0.97 & 0.90\\
 M3& 1.00  & 1.00 & 1.00\\
 M4& 0.98 & 1.00 & 1.00\\
\end{tabular}
\caption{\scriptsize Proportion of experiments where the true functional $ f_0(x)$ was inside the credible interval.}
\end{subtable}
\caption{\scriptsize  Hierarchical Bayes rescaling of the GP with squared exponential covariance kernel at $x=1/3$ for true function $ f_0(x)=\sum_{j=3}^{\infty}2.5 j^{-2}\sin(2j)\psi_j(x)$.}	\label{table:pointwise:SE:HB:x=1/3}
\end{table}

\begin{table}[!h]
\begin{subtable}[c]{\textwidth}
\centering
\begin{tabular}{c|c|c|c}
(n,m)&$ (1000,10)$&  $(3000,20)$ & $(5000,20)$\\ \hline
 BM& 0.282 (0.039) & 0.193 (0.011) & 0.160 (0.009)\\
 M1& 0.191 (0.010) & 0.116 (0.006) & 0.095 (0.004)\\
 M2& 0.418 (0.011) & 0.338 (0.006) & 0.269 (0.009)\\
 M3& 0.538 (0.017) & 0.462 (0.010) & 0.361 (0.013)\\
 M4& 0.432 (0.012) & 0.351 (0.006) & 0.279 (0.009)\\
\end{tabular}
	\caption{\scriptsize The diameter of the credible interval $4\sigma(x)$.}
\end{subtable}
\newline
\begin{subtable}[c]{\textwidth}
\centering
\begin{tabular}{c|c|c|c}
(n,m)&$ (1000,10)$&  $(3000,20)$ & $(5000,20)$\\ \hline
 BM& 0.84 & 0.92 & 0.90\\
 M1& 0.17 & 0.03 & 0.00 \\
 M2& 0.88 & 0.97 & 0.85\\
 M3& 1.00 & 1.00 & 1.00\\
 M4& 0.97 & 1.00 & 1.00\\
\end{tabular}
\caption{\scriptsize Proportion of experiments where the true functional $ f_0(x)$ was inside the credible interval.}
\end{subtable}
\caption{\scriptsize  Hierarchical Bayes rescaling of the GP with squared exponential covariance kernel at $x=2/3$ for true function $ f_0(x)=\sum_{j=3}^{\infty}2.5 j^{-2}\sin(2j)\psi_j(x)$.}	\label{table:pointwise:SE:HB:x=2/3}
\end{table}

\begin{table}[!h]
\centering
	\begin{tabular}{c|c|c|c}
(n,m)&$ (1000,10)$&  $(3000,20)$ & $(5000,20)$\\ \hline
Benchmark & 27.55s (5.57s) & 334.3s (42.4s) & 1265.4s (153.8s) \\
 Random & 5.92s  (1.50s) & 14.7s (1.9s) & 24.7s (7.7s)\\
Spatial & 5.85s (1.91s) & 14.4s (2.7s) & 22.8s (6.3s)\\
\end{tabular}
	
	\caption{\scriptsize Hierarchical Bayes rescaling of the GP with squared exponential covariance kernel. Average run time for the computation of the posterior. Benchmark: Non-distributed method. Method 1: Random partitioning, Method 2: Spatial partitioning.}
\label{table: timeSE_HB}
\end{table}

\clearpage

\bibliographystyle{plain}
\bibliography{references}

\begin{thebibliography}{10}

\bibitem{gas_turbine_co_and_nox_emission_data_set_551}
{Gas Turbine CO and NOx Emission Data Set}.
\newblock UCI Machine Learning Repository, 2019.
\newblock {DOI}: https://doi.org/10.24432/C5WC95.

\bibitem{anari2016monte}
Nima Anari, Shayan~Oveis Gharan, and Alireza Rezaei.
\newblock Monte carlo markov chain algorithms for sampling strongly rayleigh
  distributions and determinantal point processes.
\newblock In {\em Conference on Learning Theory}, pages 103--115. PMLR, 2016.

\bibitem{bai2012joint}
Yun Bai, Peter X-K Song, and TE~Raghunathan.
\newblock Joint composite estimating functions in spatiotemporal models.
\newblock {\em Journal of the Royal Statistical Society Series B: Statistical
  Methodology}, 74(5):799--824, 2012.

\bibitem{belabbas2009spectral}
Mohamed-Ali Belabbas and Patrick~J Wolfe.
\newblock Spectral methods in machine learning and new strategies for very
  large datasets.
\newblock {\em Proceedings of the National Academy of Sciences},
  106(2):369--374, 2009.

\bibitem{bevilacqua2015comparing}
Moreno Bevilacqua and Carlo Gaetan.
\newblock Comparing composite likelihood methods based on pairs for spatial
  gaussian random fields.
\newblock {\em Statistics and Computing}, 25:877--892, 2015.

\bibitem{borell1975brunn}
Christer Borell.
\newblock The brunn-minkowski inequality in gauss space.
\newblock {\em Inventiones mathematicae}, 30(2):207--216, 1975.

\bibitem{burt2019}
David~R. Burt, Carl~Edward Rasmussen, and Mark van~der Wilk.
\newblock Rates of convergence for sparse variational {G}aussian process
  regression.
\newblock In {\em International Conference on Machine Learning}, pages
  862--871. PMLR, 2019.

\bibitem{appliances_energy_prediction_374}
Luis Candanedo.
\newblock {Appliances Energy Prediction}.
\newblock UCI Machine Learning Repository, 2017.
\newblock {DOI}: https://doi.org/10.24432/C5VC8G.

\bibitem{cao:fleet:14}
Y.~Cao and D.J. Fleet.
\newblock Generalized product of experts for automaticand principled fusion of
  gaussian process predictions.
\newblock {\em arXiv e-prints}, 2014.

\bibitem{datta2016hierarchical}
Abhirup Datta, Sudipto Banerjee, Andrew~O Finley, and Alan~E Gelfand.
\newblock Hierarchical nearest-neighbor gaussian process models for large
  geostatistical datasets.
\newblock {\em Journal of the American Statistical Association},
  111(514):800--812, 2016.

\bibitem{deisenroth2015}
Marc Deisenroth and Jun~Wei Ng.
\newblock Distributed gaussian processes.
\newblock pages 1481--1490, 2015.

\bibitem{Evans10}
Lawrence~C. Evans.
\newblock {\em Partial differential equations}, volume~19 of {\em Graduate
  Studies in Mathematics}.
\newblock American Mathematical Society, Providence, RI, second edition, 2010.

\bibitem{fang2024}
Xiao Fang and Anindya Bhadra.
\newblock Posterior concentration for gaussian process priors under rescaled
  and hierarchical mat\'ern and confluent hypergeometric covariance functions,
  2024.

\bibitem{furrer2006covariance}
Reinhard Furrer, Marc~G Genton, and Douglas Nychka.
\newblock Covariance tapering for interpolation of large spatial datasets.
\newblock {\em Journal of Computational and Graphical Statistics},
  15(3):502--523, 2006.

\bibitem{ghoshal:vaart:2007}
Subhashis Ghosal and Aad van~der Vaart.
\newblock {Convergence rates of posterior distributions for noniid
  observations}.
\newblock {\em The Annals of Statistics}, 35(1):192 -- 223, 2007.

\bibitem{ghosal2017fundamentals}
Subhashis Ghosal and Aad van~der Vaart.
\newblock {\em Fundamentals of Nonparametric Bayesian Inference}.
\newblock Cambridge Series in Statistical and Probabilistic Mathematics.
  Cambridge University Press, 2017.

\bibitem{gibbs:poole:stockmeyer:1976}
N.E. Gibbs, W.G.~Poole Jr, and P.K. Stockmeyer.
\newblock An algorithm for reducing the bandwidth and profile of a sparse
  matrix.
\newblock {\em SIAM J. Numer. Anal.}, 13(2):236--250, 1976.

\bibitem{Gnedenko}
B.~V. Gnedenko.
\newblock {\em The theory of probability}.
\newblock ``Mir'', Moscow, 1982.
\newblock Translated from the Russian by George Yankovsky [G. Yankovski\u{\i}].

\bibitem{guhaniyogi2017divide}
Rajarshi Guhaniyogi, Cheng Li, Terrance~D Savitsky, and Sanvesh Srivastava.
\newblock A divide-and-conquer bayesian approach to large-scale kriging.
\newblock {\em arXiv preprint arXiv:1712.09767}, 2017.

\bibitem{guhaniyogi2022distributed}
Rajarshi Guhaniyogi, Cheng Li, Terrance~D Savitsky, and Sanvesh Srivastava.
\newblock Distributed bayesian varying coefficient modeling using a gaussian
  process prior.
\newblock {\em The Journal of Machine Learning Research}, 23(1):3642--3700,
  2022.

\bibitem{hadji2022optimal}
Amine Hadji, Tammo Hesselink, and Botond Szab{\'o}.
\newblock Optimal recovery and uncertainty quantification for distributed
  gaussian process regression.
\newblock {\em arXiv preprint arXiv:2205.03150}, 2022.

\bibitem{Hamidieh2018ADS}
Kam Hamidieh.
\newblock A data-driven statistical model for predicting the critical
  temperature of a superconductor.
\newblock {\em Computational Materials Science}, 2018.

\bibitem{superconductivty_data_464}
Kam Hamidieh.
\newblock {Superconductivty Data}.
\newblock UCI Machine Learning Repository, 2018.
\newblock {DOI}: https://doi.org/10.24432/C53P47.

\bibitem{jacobs1991adaptive}
Robert~A Jacobs, Michael~I Jordan, Steven~J Nowlan, and Geoffrey~E Hinton.
\newblock Adaptive mixtures of local experts.
\newblock {\em Neural computation}, 3(1):79--87, 1991.

\bibitem{katzfuss:guiness:2021}
Matthias Katzfuss and Joseph Guinness.
\newblock {A General Framework for Vecchia Approximations of Gaussian
  Processes}.
\newblock {\em Statistical Science}, 36(1):124 -- 141, 2021.

\bibitem{kaufman2008covariance}
Cari~G Kaufman, Mark~J Schervish, and Douglas~W Nychka.
\newblock Covariance tapering for likelihood-based estimation in large spatial
  data sets.
\newblock {\em Journal of the American Statistical Association},
  103(484):1545--1555, 2008.

\bibitem{kim:mallick:holmes}
Hyoung-Moon Kim, Bani~K Mallick, and C.~C Holmes.
\newblock Analyzing nonstationary spatial data using piecewise gaussian
  processes.
\newblock {\em Journal of the American Statistical Association},
  100(470):653--668, 2005.

\bibitem{KuelbsLi}
J.~Kuelbs and W.~Li.
\newblock Metric entropy and the small ball problem for {G}aussian measures.
\newblock {\em J. Funct. Anal.}, 116(1):133--157, 1993.

\bibitem{kulesza2011k}
Alex Kulesza and Ben Taskar.
\newblock k-dpps: Fixed-size determinantal point processes.
\newblock In {\em Proceedings of the 28th International Conference on Machine
  Learning (ICML-11)}, pages 1193--1200, 2011.

\bibitem{li:linde:99}
Wenbo~V. Li and Werner Linde.
\newblock Approximation, metric entropy and small ball estimates for {G}aussian
  measures.
\newblock {\em Ann. Probab.}, 27(3):1556--1578, 1999.

\bibitem{meeds:osindero:2006}
Edward Meeds and Simon Osindero.
\newblock An alternative infinite mixture of gaussian process experts.
\newblock In Y.~Weiss, B.~Sch\"{o}lkopf, and J.~Platt, editors, {\em Advances
  in Neural Information Processing Systems}, volume~18. MIT Press, 2006.

\bibitem{ng:deisenroth:14}
J.W. Ng and M.P. Deisenroth.
\newblock Hierarchical mixture-of-experts model for large-scale gaussian
  process regression.
\newblock {\em arXiv e-prints}, 2014.

\bibitem{nieman2022}
Dennis Nieman, Botond Szabo, and Harry Van~Zanten.
\newblock Contraction rates for sparse variational approximations in gaussian
  process regression.
\newblock {\em The Journal of Machine Learning Research}, 23(1):9289--9314,
  2022.

\bibitem{nieman2023}
Dennis Nieman, Botond Szabo, and Harry van Zanten.
\newblock {Uncertainty quantification for sparse spectral variational
  approximations in Gaussian process regression}.
\newblock {\em Electronic Journal of Statistics}, 17(2):2250 -- 2288, 2023.

\bibitem{park:apley:2018}
Chiwoo Park and Daniel~W. Apley.
\newblock Patchwork kriging for large-scale gaussian process regression.
\newblock {\em J. Mach. Learn. Res.}, 19:7:1--7:43, 2018.

\bibitem{park:huang:2016}
Chiwoo Park and Jianhua~Z. Huang.
\newblock Efficient computation of gaussian process regression for large
  spatial data sets by patching local gaussian processes.
\newblock {\em Journal of Machine Learning Research}, 17(174):1--29, 2016.

\bibitem{peruzzi2022}
Michele Peruzzi, Sudipto Banerjee, and Andrew~O Finley.
\newblock Highly scalable bayesian geostatistical modeling via meshed gaussian
  processes on partitioned domains.
\newblock {\em Journal of the American Statistical Association},
  117(538):969--982, 2022.

\bibitem{quinonero:rasmussen:2005}
Joaquin Qui{\~n}onero-Candela and Carl~Edward Rasmussen.
\newblock A unifying view of sparse approximate gaussian process regression.
\newblock {\em J. Machine Learning Research}, 6:1939--1959, 2005.

\bibitem{rasmussen:ghahramani:2002}
Carl Rasmussen and Zoubin Ghahramani.
\newblock Infinite mixtures of gaussian process experts.
\newblock In T.~Dietterich, S.~Becker, and Z.~Ghahramani, editors, {\em
  Advances in Neural Information Processing Systems}, volume~14. MIT Press,
  2002.

\bibitem{rasmussen:williams:2006}
C.E. Rasmussen and C.K.I. Williams.
\newblock {\em Gaussian processes for machine learning}.
\newblock MIT Press, Boston, 2006.

\bibitem{rousseau:szabo:2017}
Judith Rousseau and Botond Szabo.
\newblock {Asymptotic behaviour of the empirical Bayes posteriors associated to
  maximum marginal likelihood estimator}.
\newblock {\em The Annals of Statistics}, 45(2):833 -- 865, 2017.

\bibitem{Royen}
Thomas Royen.
\newblock A simple proof of the {G}aussian correlation conjecture extended to
  some multivariate gamma distributions.
\newblock {\em Far East J. Theor. Stat.}, 48(2):139--145, 2014.

\bibitem{saad:1990}
Y.~Saad.
\newblock Sparskit: a basic tool kit for sparse matrix computations, 1990.

\bibitem{scott:blocker:et.al:16}
S.L. Scott, A.W. Blocker, F.V. Bonassi, H.A. Chipman, E.I. George, and R.E.
  McCulloch.
\newblock Bayes and big data: {T}he consensus monte carlo algorithm.
\newblock {\em International Journal of Management Science and Engineering
  Management}, 11(2):78--88, 2016.

\bibitem{shang:cheng:2015}
Z.~{Shang} and G.~{Cheng}.
\newblock A {B}ayesian splitotic theory for nonparametric models.
\newblock {\em ArXiv e-prints}, August 2015.

\bibitem{Sniekers2}
Suzanne Sniekers and Aad van~der Vaart.
\newblock Adaptive {B}ayesian credible sets in regression with a {G}aussian
  process prior.
\newblock {\em Electron. J. Stat.}, 9(2):2475--2527, 2015.

\bibitem{SniekersvdV2020}
Suzanne Sniekers and Aad van~der Vaart.
\newblock Adaptive {B}ayesian credible bands in regression with a {G}aussian
  process prior.
\newblock {\em Sankhya A}, 82(2):386--425, 2020.

\bibitem{srivastava:2015a}
Sanvesh Srivastava, Volkan Cevher, Quoc Dinh, and David Dunson.
\newblock {WASP: Scalable {B}ayes via barycenters of subset posteriors}.
\newblock In Guy Lebanon and S.~V.~N. Vishwanathan, editors, {\em Proceedings
  of the Eighteenth International Conference on Artificial Intelligence and
  Statistics}, volume~38 of {\em Proceedings of Machine Learning Research},
  pages 912--920, San Diego, California, USA, 09--12 May 2015. PMLR.

\bibitem{stankewitz2024}
Bernhard Stankewitz and Botond Szabo.
\newblock Contraction rates for conjugate gradient and lanczos approximate
  posteriors in gaussian process regression, 2024.

\bibitem{stein2013statistical}
Michael~L Stein.
\newblock Statistical properties of covariance tapers.
\newblock {\em Journal of Computational and Graphical Statistics},
  22(4):866--885, 2013.

\bibitem{sz:vaart:zanten:2013}
B.~T. Szab{\'o}, A.~W. van~der Vaart, and J.~H. van Zanten.
\newblock {Empirical Bayes scaling of Gaussian priors in the white noise
  model}.
\newblock {\em Electronic Journal of Statistics}, 7(none):991 -- 1018, 2013.

\bibitem{szabo:vzanten:19}
Botond Szab{{\'o}} and Harry van Zanten.
\newblock An asymptotic analysis of distributed nonparametric methods.
\newblock {\em Journal of Machine Learning Research}, 20(87):1--30, 2019.

\bibitem{szabo:zhu:2024}
Botond Szabo and Yichen Zhu.
\newblock Vecchia gaussian processes: Probabilistic properties, minimax rates
  and methodological developments, 2024.

\bibitem{combined_cycle_power_plant_294}
Pnar Tfekci and Heysem Kaya.
\newblock {Combined Cycle Power Plant}.
\newblock UCI Machine Learning Repository, 2014.
\newblock {DOI}: https://doi.org/10.24432/C5002N.

\bibitem{titsias:2009}
M.~Titsias.
\newblock Variational learning of inducing variables in sparse {G}aussian
  {P}rocesses.
\newblock In {\em Artificial Intelligence and Statistics}, pages 567--574.
  2009.

\bibitem{tresp:2000}
Volker Tresp.
\newblock The generalized bayesian committee machine.
\newblock In {\em Proceedings of the Sixth ACM SIGKDD International Conference
  on Knowledge Discovery and Data Mining}, page 130–139, New York, NY, USA,
  2000. Association for Computing Machinery.

\bibitem{tresp:2001}
Volker Tresp.
\newblock Mixtures of gaussian processes.
\newblock In T.~Leen, T.~Dietterich, and V.~Tresp, editors, {\em Advances in
  Neural Information Processing Systems}, volume~13. MIT Press, 2001.

\bibitem{vdVvZJMLR}
A.~van~der Vaart and H.~van Zanten.
\newblock Information rates of nonparametric {G}aussian process methods.
\newblock {\em J. Mach. Learn. Res.}, 12:2095--2119, 2011.

\bibitem{vaart:zanten:2008}
A.~W. van~der Vaart and J.~H. van Zanten.
\newblock {Rates of contraction of posterior distributions based on Gaussian
  process priors}.
\newblock {\em The Annals of Statistics}, 36(3):1435 -- 1463, 2008.

\bibitem{vandervaart2009}
A.~W. van~der Vaart and J.~H. van Zanten.
\newblock Adaptive bayesian estimation using a gaussian random field with
  inverse gamma bandwidth.
\newblock {\em Ann. Statist.}, 37(5B):2655--2675, 10 2009.

\bibitem{vdVW2}
A.~W. van~der Vaart and Jon~A. Wellner.
\newblock {\em Weak convergence and empirical processes---with applications to
  statistics}.
\newblock Springer Series in Statistics. Springer, Cham, second edition, [2023]
  \copyright 2023.

\bibitem{vaart:zanten:2007}
Aad van~der Vaart and Harry van Zanten.
\newblock {Bayesian inference with rescaled Gaussian process priors}.
\newblock {\em Electronic Journal of Statistics}, 1(none):433 -- 448, 2007.

\bibitem{vaart:zanten:2008:reproducing}
Aad~W van~der Vaart, J~Harry van Zanten, et~al.
\newblock Reproducing kernel hilbert spaces of gaussian priors.
\newblock {\em IMS Collections}, 3:200--222, 2008.

\bibitem{vasudevan2009gaussian}
Shrihari Vasudevan, Fabio Ramos, Eric Nettleton, and Hugh Durrant-Whyte.
\newblock Gaussian process modeling of large-scale terrain.
\newblock {\em Journal of Field Robotics}, 26(10):812--840, 2009.

\bibitem{wenger2022posterior}
Jonathan Wenger, Geoff Pleiss, Marvin Pf{\"o}rtner, Philipp Hennig, and John~P
  Cunningham.
\newblock Posterior and computational uncertainty in gaussian processes.
\newblock {\em Advances in Neural Information Processing Systems},
  35:10876--10890, 2022.

\end{thebibliography}

%USE THE BELOW OPTIONS IN CASE YOU NEED AUTHOR YEAR FORMAT.
%\bibliographystyle{abbrvnat}
%\bibliography{reference}

\end{document}